\definecolor{shadecolor}{rgb}{0.95, 0.95, 0.86}
\def \gg{{\mathfrak g}}
\def \wt{\widetilde}
\def\ra{\rightarrow}
\newcommand{\al}{\alpha}
\def\Id{ \mathrm{Id}} 
\newcommand{\bt}{\beta}
\newcommand{\G}{\Gamma} 
\renewcommand{\O}{\Omega} 
\renewcommand{\k}{\varkappa} 
\renewcommand{\d}{\delta}
\newcommand{\D}{\Delta} 
\newcommand{\e}{\epsilon}
\renewcommand{\o}{\omega}  
\newcommand{\g}{\gamma} 
\newcommand{\la}{\lambda}
\newcommand{\pa}{\partial}
\newcommand{\ts}{\text{supp}\,} 
\newcommand{\br}{{\mathbb R}}
\newcommand{\ioi}{\int_0^{\infty}}
\newcommand{\CH}{{\mathcal H}}
\newtheorem{theorem}{Theorem}[section]
\newtheorem{example}[theorem]{Example}
\newtheorem{exercise}[theorem]{Exercise}
\newtheorem{lemma}[theorem]{Lemma}
\newtheorem{remark}[theorem]{Remark}
\newtheorem{problem}[theorem]{Riemann-Hilbert Problem}
\newtheorem{proposition}[theorem]{Proposition} 
\newtheorem{corollary}[theorem]{Corollary} 
\newtheorem{definition}[theorem]{Definition}
\def\le{\left}
\def\ri{\right}
\def\ds{\displaystyle}
\def\res{\mathop{\mathrm {res}}\limits_}
\def\bth{\begin{theorem}}
\def\et{\end{theorem}}
\def\bc{\begin{corollary}}
\def\ec{\end{corollary}}
\def\bx{\begin{example}\small}
\def\ex{\end{example}}
\def\bxr{\begin{exercise}\small}
\def\exr{\end{exercise}}
\def\bl{\begin{lemma}}
\def\el{\end{lemma}}
\def\bd{\begin{definition}}
\def\ed{\end{definition}}
\def\bp{\begin{proposition}}
\def\ep{\end{proposition}}
\def\br{\begin{remark}}
\def\er{\end{remark}}
\def\be{\begin{equation}}
\def\ee{\end{equation}}
\def\ov {\overline}
\def\&{\hspace{-15pt}&}
\def\bea{\begin{eqnarray}}
\def\eea{\end{eqnarray}}
\def\beas{\begin{eqnarray*}}
\def\eeas{\end{eqnarray*}}
\def \pa{\partial}
\def\C{{\mathbb C}}
\def\R{{\mathbb R}}
\def\N{{\mathbb N}}
\def\wh{\widehat}
\def\Z{{\mathbb Z}}
\def\u{\mathfrak u}
\def\K{\mathcal K}
\def\l{\lambda}
\def\1{{\bf 1}}
\def\s{ {\sigma}} 
\def\t{ {\tau}} 
\def\Th{ {\Theta}} 
\def\z{\zeta}
\def\hf{\frac{1}{2}}
\newcommand{\Rscr}{\mathcal R}
\numberwithin{equation}{section}
\begin{document}
\baselineskip 18pt plus 1pt minus 1pt

\begin{center}
\begin{large}

\textbf{\sc Singular value decomposition of a finite Hilbert transform defined on several intervals and  the interior problem of tomography: the Riemann-Hilbert problem approach} 
\end{large}
\bigskip

M. Bertola$^\dagger$\footnote{The work was supported  in part by the Natural Sciences and Engineering Research Council of Canada. }
A. Katsevich$^\star$\footnote{The work  was supported in part by NSF grants DMS-0806304 and DMS-1211164.}
 and 
 A. Tovbis $^\star$\footnote{The work  was supported in part by NSF grant DMS-1211164.}
 \bigskip
 
\begin{small}
$^{\dagger}$ {\em Centre de recherches math\'ematiques,
Universit\'e de Montr\'eal\\ C.~P.~6128, succ. centre ville, Montr\'eal,
Qu\'ebec, Canada H3C 3J7} and \\
 {\em  Department of Mathematics and
Statistics, Concordia University\\ 1455 de Maisonneuve W., Montr\'eal, Qu\'ebec,
Canada H3G 1M8} \\
\smallskip
$^{\star}$ {\em  University of Central Florida
	Department of Mathematics\\
	4000 Central Florida Blvd.
	P.O. Box 161364
	Orlando, FL 32816-1364
} \\
\end{small}
\end{center}
{\it E-mail:} bertola@mathstat.concordia.ca, Alexander.Katsevich@ucf.edu, Alexander.Tovbis@ucf.edu
\begin{center}{\bf Abstract}\\
\end{center} 
\baselineskip 12pt

We study the asymptotics of singular values and singular functions of a Finite Hilbert
transform (FHT), which is defined on several intervals. Transforms of this kind arise in the study of the
interior problem of tomography. We suggest a novel approach based on the technique of the
matrix Riemann-Hilbert problem and the steepest descent method of Deift-Zhou. We obtain a family
of matrix RHPs depending on the spectral parameter $\lambda$ and show that the singular values of the FHT 
coincide with the values of $\lambda$ for which the RHP is not solvable. Expressing the leading
order solution as $\lambda\to 0$ of the RHP in terms of the Riemann Theta functions, we prove that the asymptotics
of the singular values can be obtained by studying the intersections of the locus of zeroes of a certain Theta
function with a straight line. This line can be calculated explicitly, and it depends on the geometry of the intervals that
define the FHT. The leading order asymptotics of the singular functions and singular values are explicitly expressed
in terms of the Riemann Theta functions and of the period matrix of the corresponding normalized differentials,
respectively. We also obtain the error estimates for our asymptotic results.

\baselineskip 12pt
\tableofcontents

\section{Introduction}\label{math-intro}

Fix any $2g+2$, $g\in\N$, distinct points $a_i$ on the real line $a_i<a_{i+1}$, $i=1,2,\dots,2g+1$. Consider the Finite Hilbert Transform (FHT)
\begin{equation}\label{def-hilb}
(\CH f)(x):= \frac1\pi \int_{a_1}^{a_{2g+2}} \frac{f(y)}{y-x}dy,\ f\in L^2([a_1,a_{2g+2}]).
\end{equation}
Here and throughout the paper singular integrals are understood in the principal value sense.
It is well-known that if $f$ is supported on $[a_1,a_{2g+2}]$ and $\CH f$ is known on $[a_1,a_{2g+2}]$, then one can stably 
reconstruct $f$ using classical FHT inversion formulas \cite{tric} (properties of the FHT in various $L^p,~p>1,$ spaces can be found in 
\cite{oe91}). 
In some applications, {for example, in tomography,} there arise problems with incomplete data, 
where $\CH f$ is known only on a subinterval of $[a_1,a_{2g+2}]$ 
(see Section~\ref{motivation}). 
Since any singularity of $f$ located outside of the interval where $\CH f$ is given is smoothed out and not visible from the data, we conclude that stable recovery of $f$ may be possible only on the interval where $\CH f$ is known. Thus we suppose that our data are
\begin{equation}\label{hilb-dataintro}
(\CH f)(x)=\varphi(x),\ x\in [a_2,a_{2g+1}],
\end{equation}
and we want to find $f$ on $[a_2,a_{2g+1}]$. Consider the operator 
$$\CH:\, L^2([a_1,a_{2g+2}])\to L^2([a_2,a_{2g+1}]).$$ Unique recovery 
of $f$ on $[a_2,a_{2g+1}]$ is impossible since $\CH$ has a non-trivial kernel (see \cite{kt12} for its complete description). 
Therefore, to achieve  unique recovery the data $\varphi$ should be augmented by some additional information. One type of 
information that guarantees uniqueness is the knowledge of $f$ on some {interval or}
intervals inside $[a_2,a_{2g+1}]$. Let us assume that $f$ is known on the interior intervals 
\begin{equation}\label{int-int}
I_i:=[a_3,a_4]\cup [a_5,a_6]\cup\dots\cup [a_{2g-1},a_{2g}].
\end{equation}
Denote by $I_e:=[a_1,a_2]\cup [a_{2g+1},a_{2g+2}]$ the remaining ``exterior'' intervals. Applying the FHT inversion formula (see e.g. \cite{oe91})
to $\varphi(x)=(\CH f)(x)\in L^2([a_1,a_{2g+2}])$, we get
\begin{equation}\label{hilb-inv}
\begin{split}
f(y)&=-\frac{w(y)}{\pi}\left(\int_{a_1}^{a_2}+\int_{a_{2g+1}}^{a_{2g+2}}\right) \frac{\varphi(x)}{w(x)(x-y)}dx-\frac{w(y)}{\pi}\int_{a_2}^{a_{2g+1}}\frac{\varphi(x)}{w(x)(x-y)}dx,\\
&
{\rm where}~~ a_1<y<a_{2g+2}~~{\rm and}
~~w(x):=\sqrt{(a_{2g+2}-x)(x-a_1)}
.
\end{split}
\end{equation}
The left side of (\ref{hilb-inv}) is known on $I_i$. The last integral on the right is known everywhere. Combining these known quantities we get an integral equation:
\begin{equation}\label{int-eq}
(\CH^{-1}_e\varphi)(y):=-\frac{w(y)}\pi \int_{I_e} \frac{\varphi(x)}{w(x)(x-y)}dx=\psi(y),\ y\in I_i,
\end{equation}
where 
\be\label{psi}
\psi(y)= f(y)+\frac{w(y)}{\pi}\int_{a_2}^{a_{2g+1}}\frac{\varphi(x)}{w(x)(x-y)}dx,\ y\in I_i,
\ee
is a known function. 
Here and throughout the paper the symbol $\mathcal H_e^{-1}$  denotes  only the restriction of the 
inverse of $\mathcal H$ to the set  $I_e$  but not the inverse operator itself.
The problem of finding $f$ can be solved in two steps. In step 1 we solve equation (\ref{int-eq}) for $\varphi(x)$ on  $I_e$. In step 2 we substitute the computed $\varphi(x)$ into (\ref{hilb-inv}) and recover $f$. 
\begin{figure}
\centerline{
\resizebox{0.5\textwidth}{!}{\input{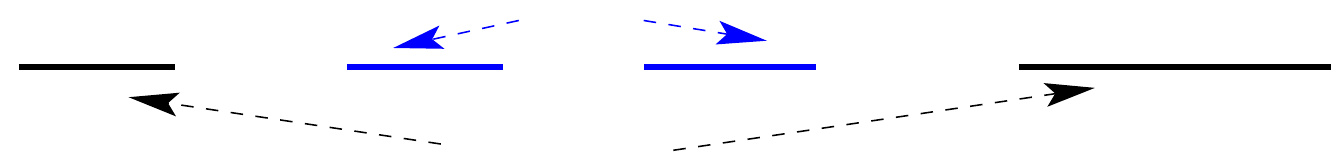_t}}}
\caption{
A schematic arrangement of the multi intervals $I_e$ (external) and $I_i$ (internal), here with $g=3$.}
\vspace{-10pt}
\end{figure}
It is clear that solving (\ref{int-eq}), i.e. inverting $\CH^{-1}_e$, is the most unstable step. The study of this step is the main motivation for this paper.  We consider the operator $\CH^{-1}_e$ in (\ref{int-eq}) as a map between two weighted $L^2$-spaces:
\begin{equation}\label{map}
\CH^{-1}_e:\ L^2(I_e,1/w)\to L^2(I_i,1/w).
\end{equation}
Then its adjoint is the Hilbert transform:
\begin{equation}\label{hilb-adj}
(\CH_i\psi)(x):= \frac1\pi \int_{I_i} \frac{\psi(y)}{y-x}dy,\ x\in I_e.
\end{equation}
The weighted  spaces for the operator $\CH^{-1}_e$ in (\ref{map}) are  naturally determined by  the structure of \eqref{int-eq}. In particular,  as follows from inequality (1.7) of \cite{eo04}, $\CH^{-1}_e$ is a continuous operator from  $L^2(I_e,1/w)$ to $L^2(I_i,1/w)$, but it is not continuous from $L^2(I_e)$ to $L^2(I_i)$.

Our aim is to study the singular value decomposition (SVD) for the operator  $\CH^{-1}_e$.
Namely, we are interested in the singular values  $2\l=2\l_n>0$, $n\in\N$, and the corresponding left and right singular functions $f=f_n,~h=h_n$,  satisfying 
\begin{equation}\label{svd-def}
\begin{split}
(\CH^{-1}_e h)(y)=-\frac{w(y)}{\pi}&\int_{I_e} \frac{h(x)}{w(x)(x-y)}dx={2}\la f(y),\ y\in I_i,\\
(\CH_i f)(x)=\frac1\pi &\int_{I_i} \frac{f(y)}{y-x}dy={2}\la h(x),\ x\in I_e.
\end{split}
\end{equation}
Note that both integrals in \eqref{svd-def}
are  nonsingular.

It is well known that the rate at which the $\l_n$'s approach zero is related with the ill-posedness of inverting $\CH^{-1}_e$.
Because of the symmetry $(\l,f,h)\Leftrightarrow (-\l,-f,h)$ of \eqref{svd-def}, we are interested
only in positive $\l_n$. Thus, 
equation \eqref{svd-def} is the main object of the present work, and obtaining {\bf the large $n$ asymptotics of $\l_n$, $f_n$ and $h_n$ is our main goal}.

Our approach relies upon a reformulation of SVD problem for \eqref{svd-def} in terms of a 
{\em matrix Riemann Hilbert Problem} (RHP) depending on a 
large parameter $\k=-\ln \l$  and, subsequently, the use of the 
 steepest descent method
of Deift and Zhou for the large $\k$ asymptotics of this RHP. 
In the last two decades, the steepest descent method for asymptotic solution of matrix RHPs
has found an 
  increasingly wide scope of applications, such as, for example: 
universality in  random  matrix theory (as the size of the matrix becomes large)  and closely related  asymptotic problems for
 large degree  orthogonal polynomials; the long time and semiclassical asymptotics of integrable nonlinear equations; 
 connection formul\ae\ for Painlev\'e transcendents; approximation theory; behavior of gap-formation probabilities in certain 
random point processes, etc. This list  can certainly be continued. A good introduction to  matrix Riemann-Hilbert problems
and their applications to random matrices and orthogonal polynomials can be found in the book  \cite{Deift} of P. Deift;
some more recent results and perspectives can be also found in the \cite{Deift60volume}.

The above examples require the {leading order approximate} solution of an RHP 
in the appropriate asymptotic regime; the existence of such leading order solutions
is a general feature of the above examples.

More recently, there appeared problems 
 where the failure of solvability of the corresponding RHP was of particular interest; relevant examples include 
the description of the first oscillations behind the point of gradient catastrophe in the 
focusing nonlinear Schr\"odinger equation \cite{BertolaTovbisNLS2}, the poles of the  Painlev\'e transcendents  (see \cite{ItsKapaevFokasbook} and references therein) etc.

The present paper is a further step in this general direction; the twist, however,  
is that our interest is now focused on the non-generic situation where our RHP {\em is not solvable}.
This situation leads to the detailed study of 
a particular object which is known in the literature on Riemann surfaces as the ``theta divisor''. Without entering into details now, the theta divisor is the locus of zeros of a particular analytic function (Riemann Theta function $\Theta$) of several variables. These variables encode the parameters of the problem we  study (the points $a_j$, $j=1,\dots,2g+2$) as well as the spectral variable $\k$. When these variables are on the theta divisor,  the spectral parameter $2\l=2e^{-\k}$ approximates a singular value of $\CH^{-1}_e$.
This is somewhat similar  to the eigenvalue problem for the finite spherical well in quantum mechanics, where the zeroes of a special function (Bessel function) correspond to the eigenvalues of the problem.

In order to achieve this analysis we need to use certain properties of Theta functions, some of which
can be found in Appendix \ref{thetaapp}.
 It seems that even in the specialized literature on Theta functions we could not find  the results that we need and thus a good deal of our effort goes in that direction.

Let us now return to the system \eqref{svd-def}. The goals of the paper are:  
\begin{itemize}
\item to describe the asymptotics  of singular values (including their multiplicity) $2\l_n$ of $\CH^{-1}_e$;
\item to describe the  asymptotics of the left and right singular functions $f_n$ and $h_n$.
\end{itemize}
 As it was mentioned above, we will associate  the singular-value problem for ${\CH^{-1}_e}$ with  an 
RHP \ref{RHPGamma} 
that depends on a parameter $\l$ and prove that: 
 \begin{enumerate}
 \item the singular values $2\l_n$ of ${\CH_e^{-1}}$ are simple and correspond exactly to the values of $\l$ for which the RHP \ref{RHPGamma}
is {\em not} solvable;

 \item in the regime of small $\l$ the RHP \ref{RHPGamma} --and its non solvability-- can be approximated by another "model" RHP
\ref{modelRHP}. The solvability of the latter depends entirely on whether a given value of the spectral parameter $\k = -\ln \l$
will turn 
a certain Riemann Theta function $\Theta$ into zero, that is, whether $\k$ brings the (vector) argument of  $\Theta$
on the theta-divisor.
 \end{enumerate}

The description of our findings for the eigenfunctions would require the introduction of many notations related to Theta functions; 
thus, we found it expedient to  refer the reader  to  Corollary \ref{cor-sing-func} in Section \ref{secdivisor}.
It is however possible to describe here the asymptotics of the singular values $2\l_n$ of ${\CH^{-1}_e}$ from Theorem \ref{theo-ass}; namely
\be
\l_n  = {\rm e}^{-\frac {n  i \pi}{\tau_{11}} + \mathcal O(1)}\ ,\ \ n\to \infty, \label{in1}
\ee
where
 $\tau_{11}$ is a purely imaginary number with positive imaginary part\footnote{The positivity follows from Riemann's Theorem  \ref{Riemann1}.}. Specifically, it is the $(1,1)$ entry of
the normalized matrix of periods $\t$  associated to a double-sheeted covering of the plane, slit along the segments constituting $I_i, I_e$ (i.e., a hyperelliptic surface). If we introduce a $g\times g$ matrix $\mathbb A$ by 
\be
(\mathbb A)_{kj}=2\int_{a_{2k}}^{a_{2k+1}}\frac{z^{j-1} dz}{R(z)}, ~~~k=1,\dots,g-1~~~~{\rm and}~~~
(\mathbb A)_{gj}=2\int_{a_{1}}^{a_{2g+2}}\frac{z^{j-1} dz}{R_+(z)},~~~~j=1,\dots,g,
\ee
where $R(z)  = \prod_{j=1}^{2g+2}(z-a_j)^\hf$ is an analytic function on  $\C \setminus (I_e \cup I_i)$  behaving as $z^{g+1}$ at infinity, then
\be\label{tau11intro}
\t_{11}=-2\sum_{j=1}^g (\mathbb A^{-1})_{j1}\int_{I_e}\frac{z^{j-1} dz}{R_+(z)}.
\ee
Here and throughout the paper the subscripts $\pm$ routinely denote limiting values of  functions (vectors, matrices) from the left/right
side of  corresponding oriented arcs. In particular,
$R_+$ means the limiting value of $R$ on $I= I_e\cup I_i$ from $\Im z>0$. (The matrices $\mathbb A,\t$ are defined  by equations \eqref{1stkind} and \eqref{taumatrix} respectively.)
The analysis behind \eqref{in1}
requires describing the set of zeroes of $\Theta$ (the theta-divisor)
and ensuring that,  as $\l\to 0_+$, the RHP \ref{modelRHP} becomes unsolvable infinitely many times.
Locating the values for which this happens leads to \eqref{in1}.
The asymptotics \eqref{in1} is illustrated by  Figure  \ref{asympteignum}, left,  where the first 22  numerically simulated $\l_n$ 
are compared with  the asymptotic formula  \eqref{in1}.
\begin{figure}
\label{asympteignum}
\begin{tabular}{ccc}
\vspace{-10pt}\includegraphics[width=0.33\textwidth]{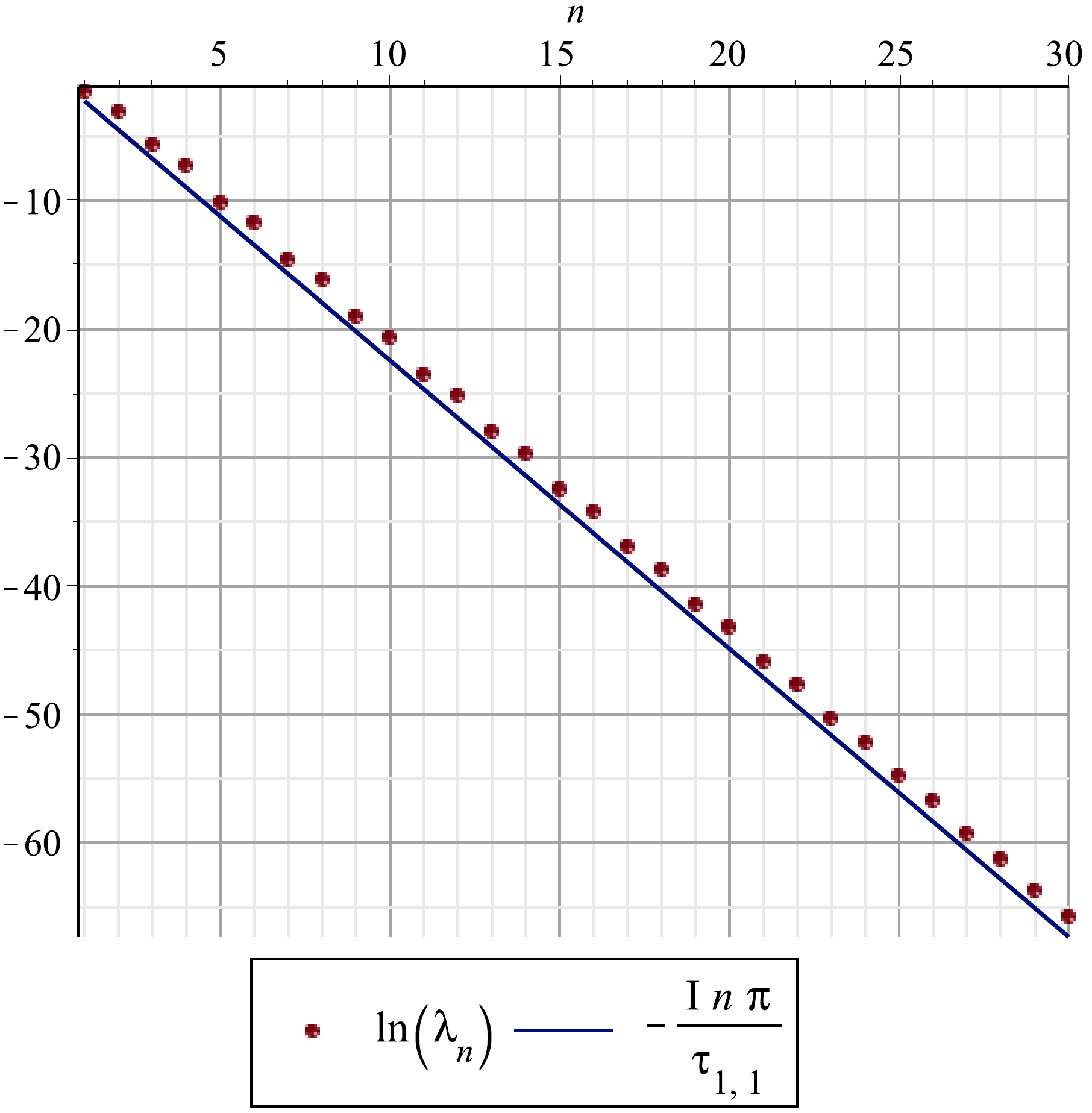}
\includegraphics[width=0.3\textwidth]{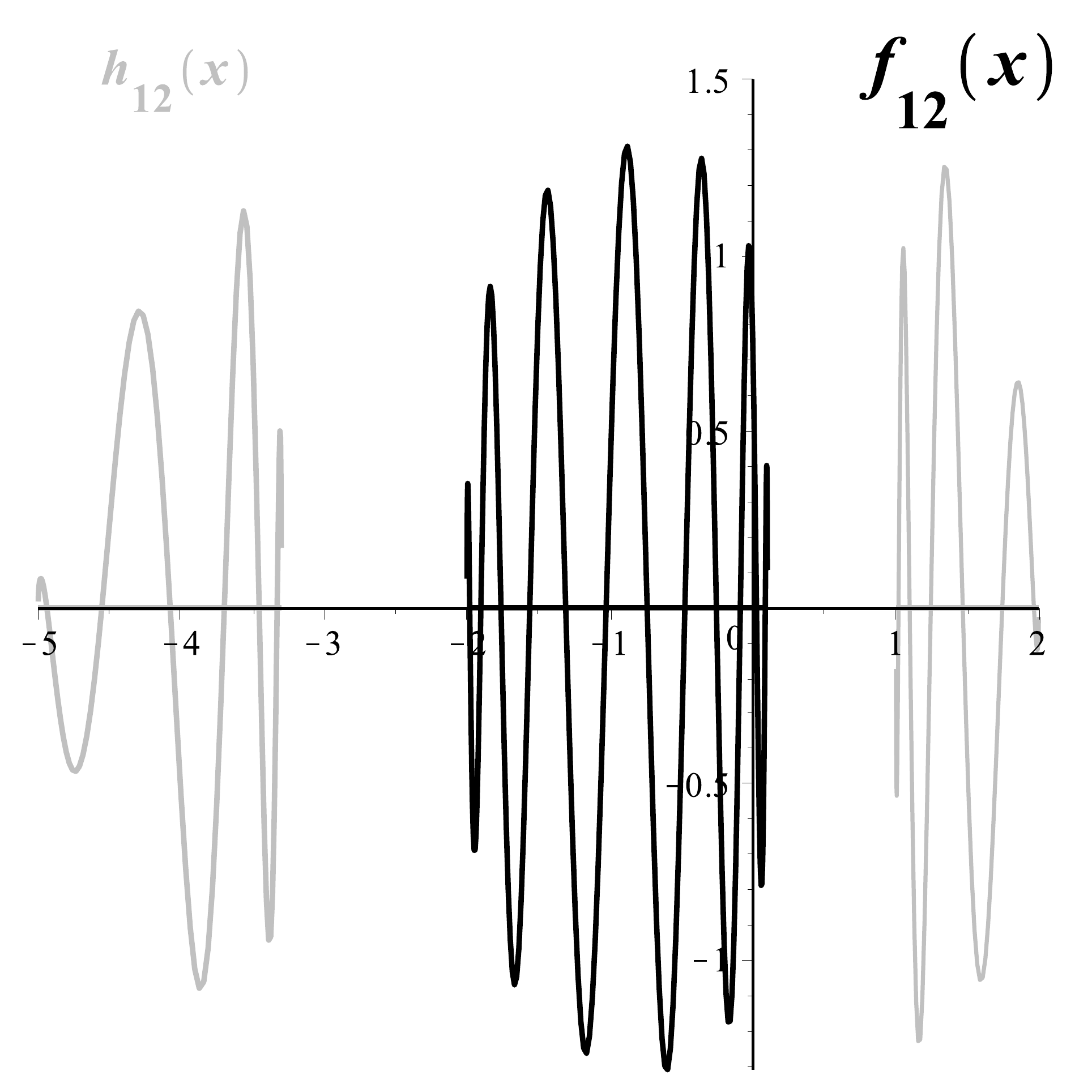}
\includegraphics[width=0.3\textwidth]{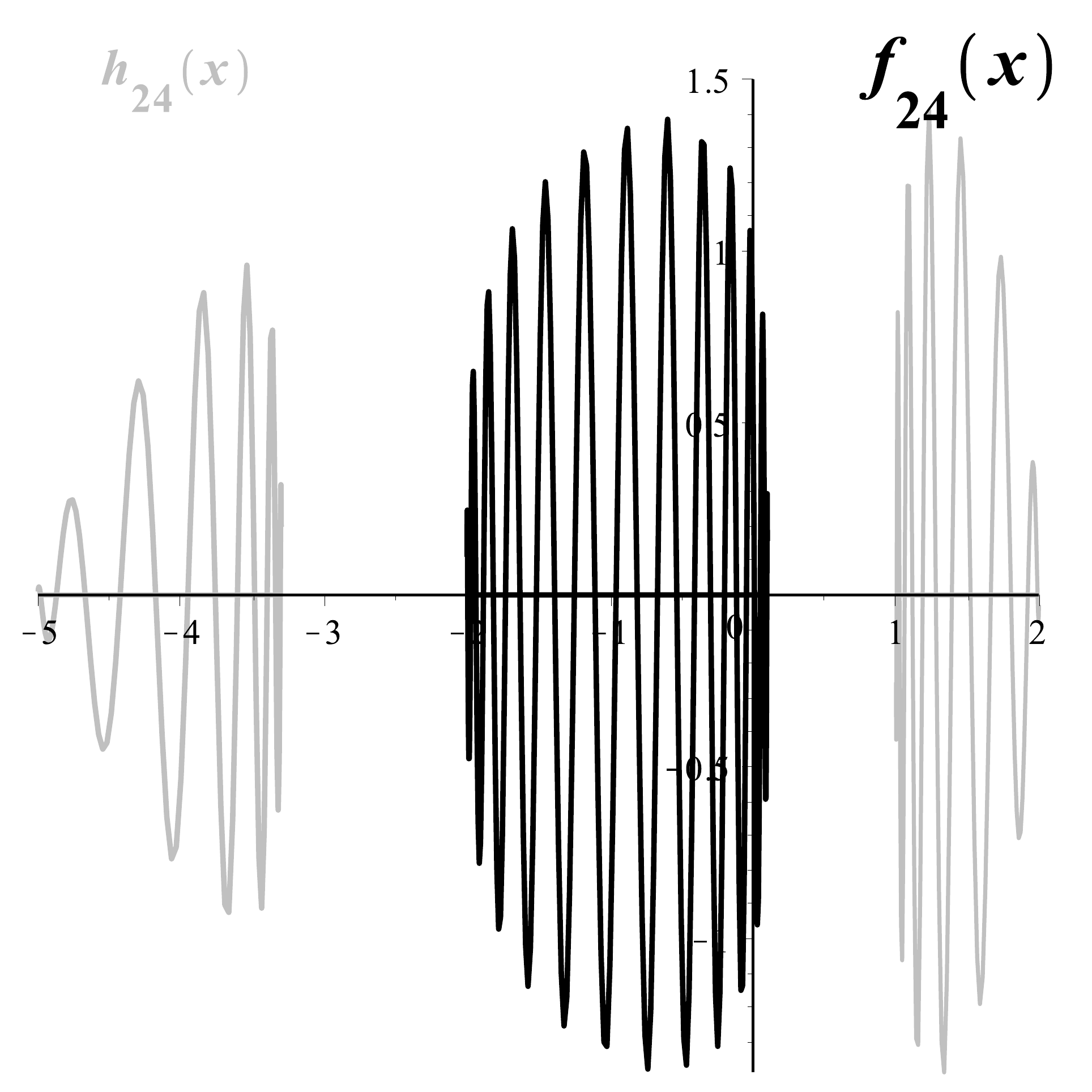}
\end{tabular}
\caption{
Plotted in the figure are $\ln (\lambda_n)$ versus the straight line $n\mapsto -\frac {i n \pi}{\tau_{11}}$ for the  choice of the
successive endpoints $\{-5,-3.3,-2,0.1,1,2\}$ (the case of $g=2$). The slope matches perfectly;
the shift of the line is due to the $\mathcal O(1)$ term in \eqref{in1}.
 The small fluctuations of $\l_n$ away from  linear behavior   are explained by the periodicity of the Theta divisor and the fact that the line shown in  Figure \ref{ThetaDivisor} has, in general, an irrational slope, so that the intersections are quasi-periodic. Observe that the log-linearity of the eigenvalues seems to be correct not only asymptotically in $n$, but even for the first singular values.
 Also plotted are  two pairs of the corresponding singular functions $(f_{12},h_{12})$ and  $(f_{24},h_{24})$, obtained numerically
simultaneously with $\l_n$. Note, the envelope of the oscillations is already visibly the same, as expected from the asymptotic description in Corollary \ref{cor-sing-func}, Remark \ref{signchange2}.   }
\end{figure}

The outline of our paper is the following.
 In Section \ref{motivation} we discuss practical motivations of the SVD problem \eqref{svd-def} coming from  tomography.
In Section \ref{secKernel} we 
 reformulate the SVD problem as an eigenvalue problem for an appropriate self-adjoint operator $\wh K$ defined on $L^2(I)$. 
We next  associate with $\wh K$ a matrix RHP \ref{RHPGamma} in terms of which we  can construct the resolvent operator $\wh R$ of $\wh K$
 and hence, the eigenfunctions of $\wh K$. 
We show that $2\l$ is a singular value of $\CH^{-1}_e$ if and only if $\l$ is a positive eigenvalue of $\wh K$, which is equivalent to the assertion that the corresponding  RHP \ref{RHPGamma}  does not have a solution. 
We also prove that the singular values $2\l_n$ of $\CH^{-1}_e$ are simple
 and that the singular functions $f_n(z)=\sqrt{w(z)}\phi_n(z)\chi_i(z)$,~$h_n(z)=\sqrt{w(z)}\phi_n(z)\chi_e(z)$, where 
$\wh K\phi_n=\l_n\phi_n$ and $\chi_i(z), \chi_e(z)$ denote the characteristic functions of $I_i,I_e$ respectively.

To obtain the large $n$ asymptotics
of $\l_n$ we need to study the asymptotic limit as $\l\ra 0$ of the solution
 $\G=\G(z;\l)$ of RHP \ref{RHPGamma}. That is done in Section \ref{sec-solGam},
where the RHP \ref{RHPGamma} is asymptotically reduced to the model RHP
\ref{modelRHP}. The latter can be solved explicitly in terms of Riemann Theta functions,
 see Section \ref{secmodelRHPM}, Theorem \ref{theoremPsi}. 
The accuracy of replacing  the RHP \ref{RHPGamma} by the model RHP
\ref{modelRHP} is evaluated in Section \ref{erroranalysis}.
The asymptotics \eqref{in1} for the singular values, as well as the
large $n$ asymptotics of the singular functions  $f_n,h_n$, see Theorem \ref{asympteigf}, 
 are obtained in Section \ref{secdivisor}.
Finally, some basic facts about Riemann Theta functions are provided in Appendix \ref{thetaapp}.

\section{Practical motivation for the problem: relationship with tomography}\label{motivation}

In 1991 Gelfand and Graev derived a formula, which gives the Hilbert transform of a function $f$ from a collection of line integrals of $f$ \cite{gegr-91}. Let $f$ be sufficiently smooth and compactly supported. We start with a practically relevant 3D case. Let $D_f(y,\bt)$ be the collection of integrals of $f$ along lines intersecting a fixed piecewise smooth curve $\Gamma$:
\begin{equation}\label{cb-def}
D_f(y,\bt)= \ioi f(y+t\bt) dt,\ y\in\Gamma,
\end{equation}
where $\bt$ is a unit vector. The map $f\to D_f$ is known as the cone-beam transform of $f$. The step of going back from $D_f$ to $f$, where $f$ represents the attenuation coefficient of the object being scanned, is the main mathematical principle on which a vast majority of CT scanners are based today. 

Let $y(s)$ be a parametrization of $\Gamma$. We assume that $\Gamma$ does not self-intersect and is traversed in one direction as $s$ varies over some interval $I$. Pick any two values $s_1,s_2\in I$, $s_1\not=s_2$. Let $\al$ be a unit vector along the chord $y(s_1),y(s_2)$. Then one has \cite{gegr-91}:  
\begin{equation}\label{main-id}
\frac12\int_{s_1}^{s_2} \frac1{|x-y(s)|}\left.\frac{\pa}{\pa\la}D_f\left(y(\la),\frac{x-y(s)}{|x-y(s)|}\right)\right|_{\la=s}ds
=\int \frac{f(x+t\al)}{t} dt,
\end{equation}
where $x$ is located on the chord between $y(s_1)$ and $y(s_2)$.
Equation (\ref{main-id}) implies that knowing the cone beam transform of $f$ one can compute the Hilbert transform of $f$ on the chords of $\Gamma$. An analogous result holds in 2D as well, where the corresponding collection of line integrals of $f$ is known as the fan beam transform of $f$. 

\begin{figure}[h]
{\centerline
{\epsfig{file=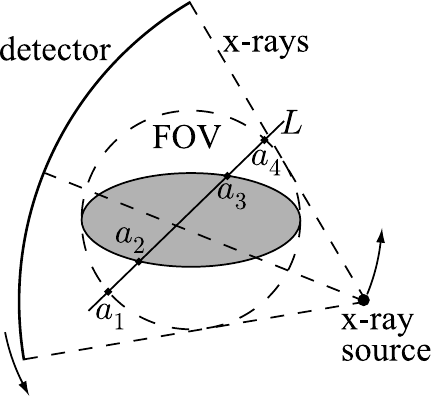, width=6cm}}
}
\caption{The geometry of the fan-beam transform. }
\label{fan-beam}
\end{figure}

See Figure~\ref{fan-beam}, which illustrates the fan-beam transform and equation (\ref{main-id}). In practice, the x-ray source and the detector are located on opposite sides of an object being scanned. The source emits multiple x-ray beams, which pass through the object and are registered by the detector. The source-detector assembly rotates around the object, and this way one collects line integral data for all lines intersecting the circular Field of View, or FOV for short (see the dashed circle in Figure~\ref{fan-beam}). If the object is contained completely inside the FOV, then we know the integrals of $f$ along all lines intersecting the object (i.e., $\ts f$). Pick any line $L$ intersecting $\ts f$. Formula (\ref{main-id}) implies that we can compute the 1D Hilbert transform of $f|_L$ for all points of $L$ inside the FOV, i.e. on the interval $[a_1,a_4]$. Let $[a_2,a_3]$ denote the support of $f|_L$. By assumption, $[a_2,a_3]\subset[a_1,a_4]$. Applying the Finite Hilbert transform inversion formula to the Hilbert data on $[a_1,a_4]$, we can recover $f$ on the line. Repeating this procedure for a collection of lines $L$ that cover the support of $f$, we reconstruct all $f$. 

After the importance of the Gelfand-Graev formula for image reconstruction in CT became clear in the middle 2000s, it led to a number of important advances \cite{ncp-04, dnck, zps-05, yyww-07, yyw-07b, yyw-08, kcnd, cndk-08}. In particular, new tools for investigating image reconstruction from incomplete (or, truncated) tomographic data have been developed. Suppose that instead of reconstructing all of $f$, one is interested in reconstructing only a small subset of $\ts f$, called the Region of Interest (ROI). In this case it would be natural to reduce the x-ray exposure by blocking the x-rays that do not pass through the ROI, i.e. the FOV and ROI will coincide (see Figure~\ref{trunc-setup}, left panel). In the figure the reduced x-ray exposure is illustrated by a smaller detector. Now, the Hilbert transform of $f|_L$ is known only on $[a_2,a_3]$, and $[a_2,a_3]\subset \ts f|_L=[a_1,a_4]$.

\begin{figure}[h]
{\centerline
{\hbox{
{\epsfig{file=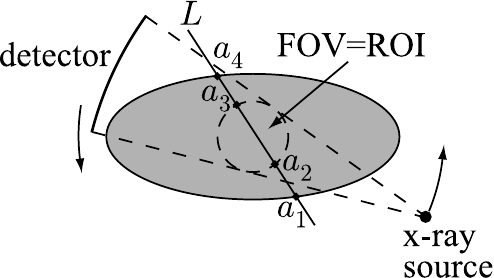, width=6.5cm}}
{\epsfig{file=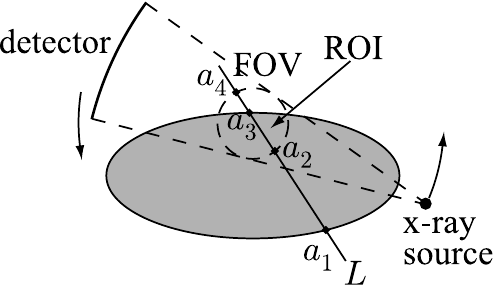, width=6.5cm}}
}}}
\caption{The interior problem (left panel) and the overlap problem (right panel). }
\label{trunc-setup}
\end{figure}

An alternative configuration arises when a part of the FOV is outside of $\ts f$, see Figure~\ref{trunc-setup}, right panel. 
In this case the ROI is a subset of the FOV, and $\ts f|_L=[a_1,a_3]$ overlaps the interval $[a_2,a_4]$ where the Hilbert transform 
is known. 

Our discussion shows that each line $L$, along which the Hilbert transform of $f$ is computed, can be considered separately from 
the others. Thus, we will assume in what follows that $f$ is a function of a one-dimensional argument and is defined on a line. 
A number of interesting results concerning the inversion of the Hilbert transform from incomplete data have been obtained. In 
the case of an overlap, the uniqueness and stability of computing $f$ from input data was established in \cite{dnck}. Uniqueness and stability 
for a different configuration was established in \cite{kcnd}. The main tool for establishing stability in \cite{dnck, kcnd} was the 
Nevanlinna principle. In \cite{kat10c, kat_11} a new approach for the study of the Hilbert transform with incomplete data was initiated. Let $I_1$ and $I_2$ be two intervals on the real line with distinct endpoints. All of the above problems are equivalent to solving the equation $\CH f=g$ for $f$ knowing $g$, where $\CH$ is the FHT that integrates over $I_1$ and the result is evaluated on $I_2$. Different relative positions of $I_1$ and $I_2$ give different problems. Hence we would like to study the operator $\CH: L^2(I_1)\to L^2(I_2)$. 
The $L^2$ spaces are natural here because these are Hilbert spaces (i.e., it makes sense to talk about SVD), square-integrable functions is a very common class that is used in practice, and $\CH: L^2(I_1)\to L^2(I_2)$ is continuous.

It was shown in \cite{kat10c, kat_11} that there exists a second order linear differential 
operator that commutes with $\CH$. This allowed the computation of the singular functions for the operator $\CH$ as solutions of 
certain singular Sturm-Liouville problems. The cases $I_1\cap I_2=\varnothing$ and $I_2\subset I_1$ have been considered in 
\cite{kat10c, kat_11}. The asymptotics of the singular functions and singular values of $\CH$ in these two cases have been 
obtained in \cite{kt12}. In  \cite{aak13} the authors considered the case of an overlap, i.e. $I_1\cap I_2\not=\varnothing$, 
but none of the intervals is inside the other. In particular, it was shown that the spectrum of the operator 
$\CH^*\CH:\,L^2(I_1)\to L^2(I_1)$ is discrete and has two accumulation points: 0 and 1.

From the practical point of view, the interior problem illustrated in Figure~\ref{trunc-setup}, left panel, is most important, so we study it in more detail. Pick six points on the real axis $a_1<a_2<\dots<a_6$ (the reason for having two extra points will become clear shortly). Suppose that  $f$ is supported on $[a_1,a_6]$, and the data $\CH f$ are known on a subinterval $[a_2,a_5]\subset [a_1,a_6]$, i.e. the equation to be solved is
\begin{equation}\label{hilb-data}
(\CH f)(x)=\frac1\pi \int_{a_1}^{a_6} \frac{f(y)}{y-x}dy=
\varphi(x),\ x\in [a_2,a_5].
\end{equation}

\begin{figure}[h]
\centerline{\epsfig{file=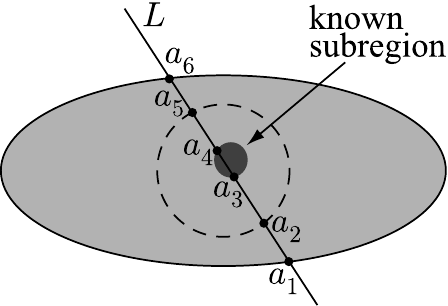, width=6.5cm}}
\caption{The interior problem with prior data. $f$ is assumed to be known inside the ``known subregion''. }
\label{prior-knowl}
\end{figure}

The goal is to recover $f$ on $[a_2,a_5]$, i.e. only where the data are available. Consider $\CH:\, L^2([a_1,a_6])\to L^2([a_2,a_5])$.  As was mentioned in the previous section, this operator has a non-trivial kernel. Hence the unique recovery of $f$ on $[a_2,a_5]$ is 
impossible. Therefore, following the suggestion of several authors \cite{yyww-07, kcnd}, we assume prior knowledge. 
One possibility is to assume that the 2D (or 3D) function $f$ is known on a small open set inside the ROI  
see Figure~\ref{prior-knowl}). Choosing $L$ that intersects the known subregion, we can assume that the 
restriction of $f$ to $L$ is known on $[a_3,a_4]$ (these are the two extra points mentioned above), which leads 
us to the problem at the beginning of Section \ref{math-intro} with $g=2$. In this case the unique recovery of $f$ 
on $[a_2,a_5]$ is theoretically possible \cite{yyww-07, kcnd}.

The assumption about prior knowledge is realistic, because quite frequently there are regions with known values of the attenuation 
coefficient $f$ inside the object being scanned. For instance, in medical applications of CT, $f=0$ for points inside the lungs 
when scanning the chest cavity of a patient. In this paper we consider a more general case $g\ge 2$, which means that there can
 be several intervals on $L$ where $f$ is known. Again, this is a practical assumption, since $L$ can intersect several regions 
inside the FOV with known $f$ (e.g., two lungs).

Theoretical analysis of the interior problem with known subregion was initiated in \cite{cndk-08}, where it was proven that finding $f$ on $(a_2,a_5)$ is stable in the appropriate sense. In this paper we approach the problem from a different  angle. Our main task is to find the asymptotics of the singular values and singular functions of the operators $\CH_e^{-1} $ and $\CH_i$ (see section~\ref{math-intro}) involved in the problem. The exponential decay of the singular values in (\ref{in1}) shows that finding $\varphi$ on the exterior intervals $I_e:=[a_1,a_2]\cup  [a_{2g+1},a_{2g+2}]$ from the data is severely unstable. This, however, does not  contradict the findings in 
\cite{cndk-08}, since the exterior intervals $I_e$ are not covered by the stability estimates in \cite{cndk-08}. 

The most common approach to solving interior problems numerically is iterative. By their nature, iterative algorithms reconstruct $f$ both inside the ROI (i.e., on $[a_2,a_5]$) and outside (i.e., on $I_e$). In some sense, the recovery of $f$ on $I_e$ means that the data $\varphi$ are recovered on $I_e$ as well, which is equivalent to inverting $\CH_e^{-1}$. Hence our findings are relevant for the analysis of stability of such algorithm. Besides, we hope that in the future the results obtained in this paper will lead to novel stability estimates of the recovery of $f$ on $[a_2,a_5]$.

Finally, we note that the approach to the study of the FHT with incomplete data developed in \cite{kat10c, kat_11, kt12} does not apply since now there are six (or more) points $a_j$, $1\leq j \leq 2g+2$, instead of four, and there seems to be no differential operator that commutes with $\CH_e^{-1}$ in this case. Hence a novel approach based on the matrix RHP is developed in this paper.
\section{The integral operator \texorpdfstring{$\hat K$}{hK} and the RHP}
\label{secKernel}

We first reformulate the SVD problem \eqref{svd-def} for the operator $\CH^{-1}_e$.
It is obvious that a triple $(2\l,f,h)$ represents  a singular value and the corresponding singular functions for the operator $\CH^{-1}_e$
if and only if the triple $(\l,\wh f,\wh h)$ represents a singular value and the corresponding singular functions for the operator $H^{-1}_e$, where
\begin{equation}
\begin{split}
(H^{-1}_e\wh h)(y)
&:=\frac{\sqrt{w(y)}}{2\pi i }\int_{I_e} \frac{\wh h(x)}{\sqrt{w(x)}(x-y)}dx
 = \la \wh f(y),\ y\in I_i, \\
(H_i\wh f)(x)&:=\frac1{2\pi i} \frac 1{\sqrt{w(x)}} \int_{I_i} \frac{\wh f(y)\sqrt{w(y)}}{(y-x)}dy=
\la \wh h(x),\ x\in I_e.
\end{split}
\label{svd-def2}
\end{equation}
Here  $\wh h   = \frac {h}{\sqrt{w}}\in L^2(I_e)$, $\wh f = \frac { i f}{\sqrt{w}}\in L^2(I_i)$, and the operators $H^{-1}_e$, $H_i$ act on the corresponding unweighted $L^2$ spaces.  It will
be convenient for us to work with the system \eqref{svd-def2} instead of \eqref{svd-def} in the remaining part of the paper.

The singular values of the system \eqref{svd-def2} coincide with the positive eigenvalues of the
 integral operator $\wh K$ that will be introduced 
in the following Subsection \ref{sec-hatK}. Moreover, the eigenfunctions of $\wh K$ correspond to singular functions of \eqref{svd-def2}.
We also prove there that  $\wh K$ is a self-adjoint Hilbert--Schmidt operator with simple eigenvalues.
In Subsection \ref{sec-resolv} we introduce a matrix RHP for  $\G=\G(z;\l)$, $\l\neq 0$, and express the resolvent operator for $\wh K$
in terms of $\G$. We prove that the solution $\G=\G(z;\l)$ of this RHP exists if and only if $\l$ is not an eigenvalue of $\wh K$.
In Subsection \ref{sect-eigenf} we express the eigenfunctions and the logarithmic derivative of the (regularized) determinant
of $\wh K$ in terms of $\G$. These expressions will be used in Section \ref{secdivisor} to approximate the singular functions
of the system \eqref{svd-def} and to prove the accuracy of the singular values in \eqref{in1}.

\subsection{Definition and properties  of \texorpdfstring{$\wh K$}{rhK} }\label{sec-hatK}

Let us define the integral operator $\wh K:L^2(I) \to L^2(I)$, where $L^2(I)  = L^2(I_e \sqcup I_i) \simeq L^2(I_e) \oplus L^2(I_i)$,
by the requirements
\be
\wh K\big|_{L^2(I_i)}  = H_i\ ,\ \ \ \ \wh K\big|_{L^2(I_e)}  = H_e^{-1}.\label{straight}
\ee
In terms of $\wh K$, the SVD system  \eqref{svd-def2} can be simply written as 
\be\label{svd-K}
 \wh K \phi = \lambda \phi, ~~~{\rm where}~~~~  \phi =\wh f(z)  \chi_i(z)+\wh h(z)\chi_e(z) \in L^2(I).
\ee
Here and henceforth $ \chi_i(z), \chi_e(z)$ denote the  characteristic (indicator) functions of the sets $I_i, I_e$, respectively.
Equation \eqref{svd-K} makes it clear that $(\l,\phi)$ is an eigenvalue/eigenfunction of $\wh K$ if and only if $(\l,\wh f,\wh h)$ satisfies the system \eqref{svd-def2}.
We also point out that, similarly to \eqref{svd-def}, operator $H_i$ is the adjoint of  $H_e^{-1}$.

\begin{theorem}
\label{hatKkernel}
The integral operator $(\hat K \phi)(z) =\int_I K(z,x)\phi(x)dx$ from $L^2(I)$ to $L^2(I)$,  where
\be
\label{kernK}
K(z,x)=
\frac{w^{\frac 1 2}(x) w^{-\frac 1 2}(z)  \chi_e(z)\chi_i(x)  + w^{\frac 1 2}(z) w^{-\frac 1 2}(x)  
\chi_i(z)\chi_e(x)}{2i\pi (x-z)}, 
\ee
is a self-adjoint and a Hilbert--Schmidt operator satisfying \eqref{straight}. Moreover, the 
eigenvalues of $\wh K$ coincide  with 
the singular values of $H_e^{-1}$.
\end{theorem}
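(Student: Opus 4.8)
The plan is to dispatch each claim by a short direct computation, the single geometric input being that $I_e$ and $I_i$ are disjoint compact sets, so that $d:=\mathrm{dist}(I_e,I_i)=\min(a_3-a_2,\,a_{2g+1}-a_{2g})>0$; in particular the kernel $K(z,x)$ of \eqref{kernK} has no singularity on its support $(I_e\times I_i)\cup(I_i\times I_e)$.

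First I would verify \eqref{straight} by reading off \eqref{kernK}. If $\phi$ is supported on $I_i$, only the first summand of $K$ survives (it carries $\chi_i(x)$), it vanishes unless $z\in I_e$, and for $z\in I_e$ one gets $(\wh K\phi)(z)=\frac{w^{-1/2}(z)}{2\pi i}\int_{I_i}\frac{w^{1/2}(x)\phi(x)}{x-z}\,dx=(H_i\phi)(z)$; symmetrically, for $\phi$ supported on $I_e$ only the second summand survives and $(\wh K\phi)(z)=\frac{w^{1/2}(z)}{2\pi i}\int_{I_e}\frac{\phi(x)}{w^{1/2}(x)(x-z)}\,dx=(H_e^{-1}\phi)(z)$ for $z\in I_i$. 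Equivalently, writing $\phi=\wh f\chi_i+\wh h\chi_e$ we have $\wh K\phi=(H_e^{-1}\wh h)\chi_i+(H_i\wh f)\chi_e$, so $\wh K$ is ``off-diagonal'' for the splitting $L^2(I)\simeq L^2(I_i)\oplus L^2(I_e)$, with blocks $H_e^{-1}$ and $H_i$.

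Self-adjointness then follows at once from this block structure and the already recorded identity $H_i=(H_e^{-1})^{*}$; alternatively, since $w^{1/2}\ge0$ is real on $I\subset[a_1,a_{2g+2}]$ and $z,x\in\mathbb R$, one checks term by term that $\overline{K(x,z)}=K(z,x)$ (the numerator of $K$ is symmetric in $z\leftrightarrow x$ and $\overline{2i\pi(z-x)}=2i\pi(x-z)$). For the Hilbert--Schmidt property I would estimate $\iint_{I\times I}|K(z,x)|^2\,dz\,dx$: on $I_e\times I_i$, $|K(z,x)|^2=\dfrac{w(x)\,w^{-1}(z)}{4\pi^2(x-z)^2}$ with $|x-z|\ge d>0$, while $w$ is bounded on $[a_1,a_{2g+2}]$ and $w^{-1}$ is integrable on $I_e$ because $w$ is positive there except at $a_1,a_{2g+2}$, where it vanishes only like a square root; hence this piece is finite, and the $I_i\times I_e$ piece is finite by the symmetry just proved. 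Thus $\wh K$ is Hilbert--Schmidt (so compact, self-adjoint) and has purely discrete real spectrum accumulating only at $0$, which makes the statement about its eigenvalues meaningful.

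For the spectral identification: by \eqref{svd-K} (equivalently, by the block form), $\wh K\phi=\lambda\phi$ with $\phi=\wh f\chi_i+\wh h\chi_e$ is the same as $H_e^{-1}\wh h=\lambda\wh f$ and $H_i\wh f=\lambda\wh h$, i.e. $(\lambda,\wh f,\wh h)$ solves \eqref{svd-def2}; applying $H_i=(H_e^{-1})^{*}$ to the first relation gives $(H_e^{-1})^{*}H_e^{-1}\wh h=\lambda^2\wh h$, so $|\lambda|$ is a singular value of $H_e^{-1}$ whenever $\lambda\ne0$. Conversely, a singular value $\sigma>0$ with right singular function $\wh h$ yields, via $\wh f:=\sigma^{-1}H_e^{-1}\wh h\ne0$, a $\sigma$-eigenfunction $\wh f\chi_i+\wh h\chi_e$ of $\wh K$, and $-\wh f\chi_i+\wh h\chi_e$ is then a $(-\sigma)$-eigenfunction; hence the nonzero spectrum of $\wh K$ is exactly $\{\pm\sigma:\sigma\ \text{a singular value of}\ H_e^{-1}\}$, and in particular the positive eigenvalues of $\wh K$ coincide with the singular values of $H_e^{-1}$. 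None of this is deep; the only points requiring a little care are the endpoint behaviour of $w$ in the Hilbert--Schmidt bound and keeping the sign and normalization bookkeeping of \eqref{kernK} consistent with \eqref{svd-def2} — but the latter was already arranged in the passage from \eqref{svd-def} to \eqref{svd-def2}.
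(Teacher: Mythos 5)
Your proof is correct and follows essentially the same route as the paper: kernel symmetry $\overline{K(x,z)}=K(z,x)$ for self-adjointness, the finiteness of $\int_{I\times I}|K|^2$ (using the positive distance between $I_e$ and $I_i$ and the integrable square-root vanishing of $w$ at $a_1,a_{2g+2}$) for the Hilbert--Schmidt property, the block/off-diagonal reading of \eqref{kernK} for \eqref{straight}, and the equivalence of $\wh K\phi=\la\phi$ with the system \eqref{svd-def2} for the spectral identification. You merely spell out details the paper leaves implicit (the endpoint integrability estimate and the converse construction of an eigenfunction from a singular pair via $(H_e^{-1})^{*}H_e^{-1}$), which is consistent with the paper's argument.
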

\begin{proof} The last statement has been established above.
It is easy to check that $K(z,x)=\overline{K(x,z)}$, so that $\hat K$  is self-adjoint. $\hat K$  is also
a  Hilbert--Schmidt operator because
\be
\int_{I\times I} |K(x,y)|^2 d  x d y =  \frac{1}{2\pi^2}\int_{I_i} d x \int_{I_e} d y \frac { w(x)}{w(y) (x-y)^2} < + \infty.\label{ineq}
\ee
Equations \eqref{straight} follow directly from \eqref{kernK}, \eqref{svd-def2}.
\end{proof}

\br\label{rem-H-HS}
Incidentally, \eqref{ineq} implies  that both $H_i, H_e^{-1}$ are Hilbert--Schmidt.
\er

\bc\label{cor-spectK} The operator $\wh K$ has a real discrete set of eigenvalues $\l_n, n\in\N$, that can accumulate only to $\l=0$.
\ec

As it was mentioned in Section \ref{math-intro}, due to the symmetry $(\l, \wh f, \wh h) \leftrightarrow (-\l, -\wh f, \wh h)$ of the system 
\eqref{svd-def} (and \eqref{svd-def2}), we can consider only nonnegative eigenvalues of $\wh K$. 
Let $\wh L = H_e^{-1} H_i:L^2(I_i) \to L^2(I_i)$. Thus we have immediately the following corollary.
\bc\label{cor-spec-L}
$\l_n$ is an eigenvalue of $\wh K$ if and only if $\l^2_n$ is an eigenvalue of $\wh L$.
Moreover, if $\phi_n = \wh f_n \chi_{i} + \wh h_n \chi_{e}$ is  an eigenfunction of $\wh K$ with eigenvalue $\l_n$, then $\wh f_n$ is an  eigenfunction of $\wh L$ corresponding to $\lambda_n^2$.
\ec

Direct calculations show that  $(\wh Lf)(x)=\int_{I_i}L(x,y)f(y)dy$, where the kernel is given by
\be\label{Lxy}
L(x,y) = \frac{\sqrt{w(x) w(y)} }{4\pi^2}\int_{I_e} \frac {d z}{w(z)} \frac 1{(z-x)(z-y)}.
\ee
Aiming now at analyzing the spectrum of $\wh L$ we point out that $\wh L$ is {\em strictly Totally Positive}, according to the definition below.

\bd
An integral operator  $\wh L :L^2(I) \to L^2(I)$ with a continuous kernel $L(x,y)$, where $I\subset \R$ is a finite union of segments, is called strictly totally positive (sTP) if for any $n\in \N$, and for any choices of a pair of ordered  $n$-tuples $x_1<x_2<\dots<x_n,\ y_1<y_2<\dots<y_n$ in $I$ 
\be\label{def-sTP}
\det\le[L(x_\ell, y_k)\ri]_{1\leq\ell,k\leq n} >0.
\ee
\ed

\bl\label{propSTP} 
The operator  $\wh L = H_e^{-1} H_i$ is strictly totally positive.  
\el
\begin{proof}
The proof is a straightforward computation using Andreief's identity that relates determinants of single integrals and multiple integrals of determinants \cite{Andreief} 
\begin{equation}
\begin{split}
\det&\le[L(x_\ell, y_k)\ri]_{1\leq\ell,k\leq n}  \\
&= \prod_{s=1}^n \sqrt{w(x_s) w(y_s)} \det \le[
\int_{I_e} \frac { {\rm d}z}{w(z)} \frac 1{(z-x_\ell)(z-y_k)}
\ri]_{1\leq\ell,k\leq n} \\ 
&\mathop{=}^{ \hbox{\tiny Andreief}}
 \frac {\prod_{s=1}^n \sqrt{w(x_s) w(y_s)}}{n!} \int_{(I_e)^n} \prod_{r=1}^n \frac {{\rm d} z_r}{w(z_r)}
 \det \le[
 \frac 1{z_s-x_\ell}
 \ri]_{\ell,s\leq n}  
 \det \le[
 \frac 1{z_s-y_k}
 \ri]_{s,k\leq n}\\ 
&\mathop{=}^{\text{\tiny Cauchy det}}
\frac {\Delta(x) \Delta(y) \prod_{s=1}^n \sqrt{w(x_s) w(y_s)}}{n!} \int_{(I_e)^n} \prod_{r=1}^n \frac {{\rm d} z_r}{w(z_r)}
\frac{[\Delta(z)]^2}{\prod_{\ell,s} (z_s-x_\ell) \prod _{r,k}(z_r-y_k)} \\
&= \frac {\Delta(x) \Delta(y) \prod_{s=1}^n \sqrt{w(x_s) w(y_s)}}{n!} \int_{(I_e)^n} \prod_{r=1}^n \frac {{\rm d} z_r}{w(z_r)}
\frac{[\Delta(z)]^2}{\prod_{\ell,s} (z_s-x_\ell) (z_s-y_\ell )}.
\end{split}
\end{equation}
Here $\Delta(x) = \prod_{i<j} (x_i-x_j)$ is the Vandermonde determinant.
Now it is apparent that the integrand of the last expression here above is strictly positive for any $x_j, y_j\in I_i$ and $z_r\in I_e$ because the interval $I_i$ is inside the external one.
\end{proof}

\begin{theorem}\label{theo-simple-spec}
The integral operator $\wh L = H_e^{-1} H_i$ has simple, positive eigenvalues.
\end{theorem}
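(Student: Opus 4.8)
The plan is to derive everything from two structural facts already established: first, $\wh L = H_e^{-1}H_i = AA^*$ where $A:=H_e^{-1}$ and $A^*=H_i$ (so $\wh L$ is self-adjoint, positive semidefinite, and compact — in fact trace class, since both factors are Hilbert--Schmidt by Remark \ref{rem-H-HS}); and second, $\wh L$ is strictly totally positive by Lemma \ref{propSTP}. Compactness already gives that the spectrum of $\wh L$ is a discrete subset of $[0,+\infty)$ accumulating only at $0$, so it remains to rule out the eigenvalue $0$ and to prove that every eigenvalue is simple.

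First I would dispose of positivity. Since $\wh L = AA^*$, one has $\ker\wh L = \ker A^* = \ker H_i$, so it suffices to show $H_i$ is injective. If $H_i\wh f = 0$, then the Cauchy transform $C(z):=\int_{I_i}\wh f(y)\sqrt{w(y)}(y-z)^{-1}dy$, which is holomorphic on the connected open set $\C\setminus I_i$ and lies in $L^1$ on $I_i$, vanishes on the interior of $I_e$; by analytic continuation $C\equiv 0$ on $\C\setminus I_i$, and the Sokhotski--Plemelj jump formula then forces $\wh f\sqrt w = 0$ a.e.\ on $I_i$, i.e.\ $\wh f=0$. Hence $0$ is not an eigenvalue, so every eigenvalue of $\wh L$ is strictly positive.

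For simplicity I would use the classical compound-operator argument of Gantmacher--Krein/Karlin. For each $n$, the $n$-th antisymmetric power $\wedge^n\wh L$ acts on $\wedge^n L^2(I_i)\cong L^2(\Sigma_n)$ with $\Sigma_n:=\{x_1<\dots<x_n\}\subset I_i^n$, and by Andreief's identity its kernel is the compound kernel $\det\big[L(x_\ell,y_k)\big]_{1\leq\ell,k\leq n}$, which by Lemma \ref{propSTP} is strictly positive on the interior of $\Sigma_n\times\Sigma_n$ and in particular positive a.e.; moreover $\wedge^n\wh L$ is again compact. Jentzsch's theorem (the Perron--Frobenius/Krein--Rutman theorem for integral operators with positive kernels) then says the spectral radius of $\wedge^n\wh L$ is a positive, simple eigenvalue strictly dominating all others. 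Writing the eigenvalues of $\wh L$ in decreasing order $\mu_1\geq\mu_2\geq\dots>0$ with multiplicity, the spectral theorem exhibits the eigenvalues of $\wedge^n\wh L$ as the products $\mu_{i_1}\cdots\mu_{i_n}$ over $i_1<\dots<i_n$, whose maximum is $\mu_1\cdots\mu_n$. Simplicity of that top eigenvalue forces, by induction on $n$, the strict inequalities $\mu_1>\mu_2>\dots$: if $\mu_n=\mu_{n+1}$ for some $n$, then $\mu_1\cdots\mu_n$ would be a repeated eigenvalue of $\wedge^n\wh L$ (attained both by $\{1,\dots,n-1,n\}$ and by $\{1,\dots,n-1,n+1\}$), a contradiction. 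Thus every eigenvalue of $\wh L$ is simple.

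The only genuinely delicate point is checking that this total-positivity machinery applies verbatim when $I_i$ is a finite union of segments rather than a single interval; it does, because all that Jentzsch's theorem requires is positivity of the compound kernels on the relevant order simplices — which is exactly what Lemma \ref{propSTP} provides — and no use is made here of the finer oscillation properties (exact sign-change counts of eigenfunctions) that would be sensitive to the geometry of $I_i$. Identifying $\wedge^n\wh L$ with the integral operator on $L^2(\Sigma_n)$ carrying the compound kernel, and verifying that it remains compact, are routine and rely only on tools (Andreief's identity, boundedness of $w^{\pm 1/2}$ away from the $a_j$) already used above.
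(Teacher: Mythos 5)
Your proof is correct, and in substance it travels the same road as the paper: the engine is the strict total positivity of the kernel of $\wh L$ (Lemma \ref{propSTP}), from which simplicity and positivity of the spectrum follow by the classical Kellogg--Gantmacher--Krein mechanism. The difference is one of packaging. The paper simply invokes \cite{Kellogg1918, Pinkus-rev} (with the remark, elaborated in Appendix \ref{PinkusApp}, that the argument is insensitive to $I_i$ being a union of intervals), whereas you unwind that citation into a self-contained argument: the identification of $\wedge^n\wh L$ with the integral operator on the ordered simplex whose kernel is the $n$-th compound determinant, Jentzsch's theorem giving a simple dominant eigenvalue of each $\wedge^n\wh L$, and the product formula for the eigenvalues of exterior powers forcing $\mu_1>\mu_2>\dots$ (your ``induction'' is really a direct one-step contradiction, but that is harmless). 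You also add something the paper leaves to the references: an independent proof that $0$ is not an eigenvalue, via $\ker\wh L=\ker H_i$ together with injectivity of $H_i$ by analytic continuation of the Cauchy transform off $I_i$ (here one should note that $\C\setminus I_i$ is connected and that $\sqrt{w}>0$ on the interior of $I_e$, both of which hold), while the strictly-positive-spectrum conclusion is built into the cited Kellogg/Pinkus theorem. All the technical prerequisites you gesture at (continuity and boundedness of $L$ on $I_i\times I_i$, hence Hilbert--Schmidt compound kernels; completeness of the eigenbasis of the self-adjoint compact $\wh L$ so that products exhaust the spectrum of $\wedge^n\wh L$) do hold in this setting, so there is no gap; your version is simply more explicit, at the cost of redoing classical material the paper outsources.
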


This theorem was, in fact, proved in Kellogg \cite{Kellogg1918} and can also be found in \cite{Pinkus-rev}. The only difference is that \cite{Kellogg1918, Pinkus-rev} consider a kernel on $L^2([0,1])$, whereas we have a union of disjoint intervals. However the arguments used in their proof apply verbatim. We highlight some details in Appendix \ref{PinkusApp}.

\bc \label{simple-K} 
The eigenvalues of the integral operator $\wh K$ are simple. \ec
\begin{proof}
Indeed, $\wh K$ is Hilbert-Schmidt and self-adjoint and, therefore, it has a complete basis of eigenfunctions. If an eigenvalue $\l_n$ of 
 $\wh K$ is not simple, then there are at least two linearly independent eigenfunctions of $\wh K$ corresponding to $\l_n$.
Then, according to \eqref{svd-K}, $\l^2_n$ is not a simple eigenvalue of $\wh L$. The obtained contradiction with Theorem \ref{theo-simple-spec}
completes the proof. 
\end{proof}
\br
\label{signchange}
It is also known   (\cite{Pinkus-rev}) that the $n$-th eigenfunction $\wh f_n$ of $\wh L$ corresponding to the ordered eigenvalues $\l_0^2> \l_1^2>\dots>0$ changes sign $n$ times within $I_i$ (and that the zeroes in the interior of $I_i$ are simple), see Figure \ref{asympteignum}. 
It will be shown in Remark \ref{signchange2} that the approximation of $\wh f_n$ we are going to obtain is 
asymptotically consistent with this property. 
\er

%

\subsection{Resolvent of \texorpdfstring{$\wh K$}{rhK} and the Riemann--Hilbert problem}\label{sec-resolv}
The operator $\wh K$ falls within the class of ``integrable kernels'' \cite{ItsIzerginKorepinSlavnov} (see also the introduction of \cite{BertolaCafasso1}) and it is known that its spectral properties are intimately related to a suitable Riemann--Hilbert problem. 
In particular, the kernel of the resolvent integral operator $\wh R = \wh R(\l)  :L^2(I) \to L^2(I)$, defined by
\be\label{whR}
(\Id + \wh R)(\Id -\frac 1 \l \wh K) = \Id,
\ee
can be expressed through the solution $\G$ of the following RHP (as explained  in Lemma \ref{lem-kernR} below).

\begin{problem}
\label{RHPGamma}
Find a $2\times 2$ matrix-function $\G=\G(z;\la)$, $\l\in\C\setminus\{0\}$, which is  analytic in 
$\overline{\C}\setminus I$, where $I=I_i\cup I_e$, admits non-tangential boundary values   from the upper/lower half-planes that belong to $L^2_{loc}$ in the interior points of $I$, and satisfies 
\bea
\label{rhpGam}
&&\G_+(z;\l)=\G_-(z;\l) \left[\begin{matrix} 1 & 0 \\ \frac{iw}{\la} & 1 \end{matrix}\right], \ \ z\in  I_i;\qquad 
\G_+(z;\l)=\G_-(z;\l) \left[\begin{matrix} 1 & -\frac{i}{\la w} \\ 0 & 1 \end{matrix}\right],\ \ z\in  I_e,
\\
\label{assGam}
&&\G(z;\l)=\1+O(z^{-1})~~~~{\rm as} ~~z\ra\infty, \\
\label{endpcond-out}
&&\G(z;\l)=\le[\mathcal O(1), \mathcal O((z-a_j)^{-\hf})\ri],~~z\ra a_j,~~j=1,2g+2,\\
\label{endpcond-out-inn}
&&\G(z;\l)=\le[\mathcal O(1), \mathcal O(\ln(z-a_j))\ri],~~~~z\ra a_j,~~j=2,2g+1,\\
\label{endpcond-inn}
&&\G(z;\l)=\le[\mathcal O(\ln (z-a_j)),\mathcal O(1)\ri], ~~~~~ z\ra a_j,~~j=3, \dots, 2g.
\eea
Here the endpoint  behavior of $\Gamma$ is described column-wise. We will frequently omit the dependence on $\l$ from notation for  convenience.
\end{problem}
{We will refer informally to the conditions \eqref{rhpGam}, as well as similar conditions to be introduced later, as ``jumps''.}
\br Since the jump matrices in RHP \ref{RHPGamma} are analytic at all points in the interior of $I$, the solution  of the RHP can be easily shown to admit  {\em analytic} boundary values. A similar observation applies to all the subsequent RHPs.
\er

\bp\label{prop-RHPG}
If a  solution to the RHP \ref{RHPGamma} exists,  then it is unique.
\ep 
\begin{proof} 

Let $\G_{1,2}$ be two solutions of the RHP \ref{RHPGamma}. Then it is promptly seen that  $(\G_2\G_1^{-1})(z)$ has no jumps on $I_i, I_e$.
If $\G(z)$ satisfies RHP \ref{RHPGamma}, $\det \G$ is analytic and single-valued in $\bar \C\setminus\{a_1,a_2,\dots,a_{2g+2}\}$
and $\det \G(\infty)=1$. Then, in view of the endpoint behavior \eqref{endpcond-out}-\eqref{endpcond-inn}, we conclude
that  $\det\G\equiv 1$.
Thus, since $\det\G_{1,2}\equiv 1$, the matrix  $\G_2\G_1^{-1}$ has no more than logarithmic
growth at $a_j$, $j=2,\dots ,2g+1$. Since only the first row of $\G_1^{-1}(z)$ has $O((z-a_j)^{-\hf})$
behavior near $a_j,~j=1,2g+2,$ we conclude that $\G_2\G_1^{-1}= O((z-a_j)^{-\hf})$ near $a_j,~j=1,2g+2,$.
Thus,  $\G_2(z)\G_1^{-1}(z)$ is analytic  in $\bar\C$ and approaches $\1$ as $z\ra\infty$.
So, by Liouville's theorem, $\G_2(z)\equiv \G_1(z)$. 
\end{proof}
For convenience of matrix calculations below, throughout the paper we use the Pauli matrices
$$
\s_1= \begin{bmatrix}
0 & 1\\
1&0
\end{bmatrix},~~~
\s_2= \begin{bmatrix}
0 & -i\\
i&0
\end{bmatrix},~~~
\s_3= \begin{bmatrix}
1 & 0\\
0&-1
\end{bmatrix}.
$$
Remarks \ref{SchwartzGamma}, \ref{symmetrylambda} below follow from   Proposition \ref{prop-RHPG}.
\br
\label{SchwartzGamma}
For any $\l \in \C$ the solution $\Gamma(z;\l)$ enjoys the Schwarz symmetry 
\be
\ov {\Gamma(\ov z;\ov\l)} = \Gamma(z;\l)
\ee
because (as the reader may verify) the matrix  $\ov {\Gamma(\ov z;\ov\l)}$ solves the same RHP.
\er
\br
\label{symmetrylambda}
The function $\Gamma(z;\l)$ has the symmetry 
\be
\Gamma(z;-\l) = \s_3 \Gamma(z;\l)\s_3, 
\ee
which follows by noticing that the jumps have the same symmetry. In particular the RHP for $\Gamma(z;\l)$ is solvable if and only if the one for $\Gamma (z;-\l)$ is. This is a reflection of the symmetry of the spectrum of the problem.
\er

\br\label{rem-vec-AB}
If $\Gamma(z;\l)  = [\vec A(z), \vec B(z)]$, where $\vec A,\vec B$ are  the columns of $\G$,	
is the solution of the RHP \ref{RHPGamma}, then $A(z)$ is analytic on $I_e$, and $B(z)$ is analytic on $I_i$.
Moreover, in terms of $\vec A,\vec B$, the jump and normalization conditions 
 \eqref{rhpGam} and \eqref{assGam}, respectively, can be written as a system
\begin{equation}\label{intsysGam}
\begin{split}
\vec A(z) = \le[\begin{array}{c} 1 \\ 0 \end{array} 
\ri] + \frac {1} {2\pi \la} \int_{I_i}\frac{\vec B(\z) w(\z) d\z}{ \z-z},\ z\not\in  I_i,\\
\vec B(z) = \le[\begin{array}{c} 0 \\ 1 \end{array} 
\ri] - \frac {1} {2\pi\la} \int_{I_e}\frac{\vec A(\z) d \z}{ w(\z)(\z-z)},\ z\not\in I_e. 
\end{split}
\end{equation}
On the other hand, the system of integral equations \eqref{intsysGam} is equivalent to the RHP \ref{RHPGamma}. Indeed, the jump \eqref{rhpGam} and normalization condition \eqref{assGam} immediately follow from \eqref{intsysGam}. As solutions of   \eqref{intsysGam}, $\vec A(z)$ is analytic in $\bar\C\setminus I_i$ and 
$\vec B(z)$ is analytic in $\bar\C\setminus I_e$. Then the endpoint behavior \eqref{endpcond-out}-\eqref{endpcond-inn}
follow from properties of Cauchy operators, see \cite{Gakhov}, Section 8.3.
\er

Lemma \ref{lem-kernR} below is a particular case of the resolvent formula derived in \cite{ItsIzerginKorepinSlavnov}.
In the interest of self-containedness, we present it with a proof. 
\bl\label{lem-kernR}
If $\l$ is such that the solution $\G(z;\l)$ of the RHP \ref{RHPGamma} exists, then the kernel $R$ of the resolvent $\wh R$ defined by \eqref{whR} is given by 
\bea
\label{resolvent}
R(z,x;\lambda) =   
\frac{
\vec g^t(x) \Gamma^{-1}(x;\lambda) \Gamma(z;\lambda)\vec f(z)} {2i\pi \l  (z-x)},\\
\vec f(z):= \le[
\begin{matrix}
{ \frac{i \chi_e(z)}{\sqrt{ w(z)}}} \\
\sqrt{ w(z)} \chi_i(z)
 \end{matrix}\ri], \ 
 \vec g(x):= \le[
 \begin{matrix}
-i\sqrt{ w(x)} \chi_i(x)\\
\frac{\chi_e(x)} {\sqrt{ w(x)}} 
\end{matrix}
\ri],
\eea
where $\vec g^t$ denotes the transposition of $\vec g$.
\el
\begin{proof}
The jumps of the RHP \ref{RHPGamma} can be written in the form
\be
\Gamma_+(z;\l) = \Gamma_-(z;\l) \le(\1 - \frac 1 \lambda \vec f (z) \vec g^t(z)\ri),\  z\in I\ ,
\label{RHPIIKS}
\ee
and equations  \eqref{intsysGam}  can be written compactly as 
\be
\Gamma(z;\l)  = \1  - \int_{I} \frac  {  \Gamma_- (\z;\l)\vec f(\z) \vec g^t(\z)d \z}{2i\pi \l (\z-z)}. 
\label{integ1}
\ee
Note that 
\be\label{IIKs}
K(z,x)=\frac{\vec f^t(z)\vec g(x)}{2\pi i(z-x)}~~~{\rm and}~~~\vec f^t(z)\vec g(z) \equiv 0,\  z\in I.
\ee
The latter equation implies that the boundary value in the integrand of \eqref{integ1} is irrelevant because  
$\Gamma_+(\z;\l)\vec f(\z) \vec g^t(\z) =\Gamma_-(\z;\l) \le(\1 - \frac {\vec f(\z)\vec g^t(\z)}\l\ri) \vec f(\z) \vec g^t(\z) = \Gamma_- (\z;\l)\vec f(\z) \vec g^t(\z) $.

Equation \eqref{whR} can be written as 
\be\label{whR1}
 \wh K  \wh R = \l \wh R -  \wh K.
\ee
 To complete the proof, it is sufficient to show that
the kernel of  $ \wh K  \wh R $ is equal to $\l R(x,y;\l)  - K(x,y)$, where $K,R$ are given by \eqref{resolvent}, \eqref{kernK} respectively.
Indeed, taking into account \eqref{IIKs} and \eqref{integ1},
we calculate the  kernel of  $ \wh K  \wh R $ as
\bea\label{kern-calc}
&\& \int_{I} d \z \frac{
\vec g^t(y) \Gamma^{-1}(y;\lambda) \Gamma(\z;\lambda)\vec f(\z)} {2i\pi \l (\z-y)} \frac { \vec g^t(\z) \vec f(x)}{2i\pi (x-\z)}
=\\
&\&=
 \int_{I} d \z \frac{
\vec g^t(y) \Gamma^{-1}(y;\lambda) \Gamma(\z;\lambda)\vec f(\z)\vec g^t(\z) \vec f(x)} {(2i\pi)^2 \l (x-y)} \le[\frac { 1}{\z-y} - \frac 1{\z-x}\ri]
\cr
&\&  = \frac{
\vec g^t(y) \Gamma^{-1}(y;\lambda)\le( \Gamma(x;\lambda) - \Gamma(y;\l)\ri) \vec f(x)} {2i\pi (x-y)} = 
\l R(x,y;\l)  - K(x,y).
\nonumber 
\eea
The conditions \eqref{endpcond-out}-\eqref{endpcond-inn} guarantee 
that the integrals above are well defined.
\end{proof}

\bth\label{theo-Gam-K}
 $\l\in\C\setminus \{0\}$ is an eigenvalue of $\wh K$ if and only if 
the RHP \ref{RHPGamma} for $\G(z;\l)$ has no solution. 
\et
\begin{proof}
Suppose $\l$ is an eigenvalue and the RHP \ref{RHPGamma} has a solution. Then, according
to  Lemma \ref{lem-kernR}, $\Id - \frac 1 \l \wh K$ is invertible and this is a contradiction. 
Let us now assume that $\l$ is not an eigenvalue.
Using \eqref{svd-def2}, the system of integral equations  \eqref{intsysGam} for columns of $\G$ can be equivalently written
as 
\be
\vec B(z) = \le[\begin{array}{c}  \frac{-i}{\lambda\sqrt{w}}H^{-1}_e(1/\sqrt{w}) \\ 1 \end{array} 
\ri] +\frac 1{\la^2\sqrt{w}}H^{-1}_e H_i (\sqrt{w} {\vec B}),\ z\in I_i
\label{int-eq2}
\ee 
or
\be
\le(\Id - \frac 1  {\l^2} \frac 1{\sqrt{w}}\circ \wh L \circ \sqrt{w}\ri) \vec B = \le[\begin{array}{c}  \frac{-i}{\lambda\sqrt{w}}H^{-1}_e(1/\sqrt{w}) \\ 1 \end{array} 
\ri], 
\label{int-eq3}
\ee
where  $\frac 1{\sqrt{w}}\circ  \wh L \circ \sqrt{w}$ is a conjugation  of $\wh L$ by the  multiplication operator $\sqrt w$ 
and the kernel of $\wh L$ is given by \eqref{Lxy}. Note that  this multiplication operator and its inverse are   bounded  (on $L^2(I_i)$).
 According to Corollary \ref{cor-spec-L}, equation \eqref{int-eq3} has a solution
$\vec B(z)\in L^2(I_i)$, since $\l$ is not an eigenvalue of $\wh K$. Then $A(z)$, given  by \eqref{intsysGam}, is analytic when  $z\not\in I_i$,
and the second equation in \eqref{intsysGam} holds. Thus, $\G(z)= [A(z),B(z)]$ is analytic when $z\not\in I$ and satisfies the jump conditions \eqref{rhpGam} and the normalization \eqref{assGam}. 
Direct calculations show that  $\G(z)= [A(z),B(z)]$ also satisfies endpoint behavior \eqref{endpcond-out}-\eqref{endpcond-inn}.
Thus, $\G(z)=\G(z;\l)$ satisfies RHP \ref{RHPGamma}.
\end{proof}
\subsection{Eigenfunctions and further properties of \texorpdfstring{$\hat K$}{ehK}}\label{sect-eigenf}
According to the spectral theorem,  the resolvent $ \hat R(\l)$ of $\hat K$ is analytic in $ \C \setminus {\rm spec}(\wh K)$ and
  \be\label{reso-spect}
\Id + \hat R(\l) =  \le (\Id - \frac 1 \l \wh K\ri )^{-1} = \sum_{n} \frac 1{1- \frac {\l_n}{\l}} P_n,
  \ee
where the summation runs over all the eigenvalues of $\hat K$, and $P_n$ denotes the (orthogonal) projector on the $n$-th eigenspace. 
 Note that, according to Corollary \ref{simple-K}, $ \hat R(\l)$ has simple poles at  the eigenvalues $\l_n$.
We restrict  the top equation in \eqref{intsysGam} to $I_e$ and divide it by $\sqrt {w(z)}$,  while we restrict  the bottom one to $I_i$ and multiply by $i\sqrt{w(z)}$ so that, after a rearrangement of terms, we obtain
\bea \label{13213}
&\& \frac {\chi_e}{\sqrt{w(z)}} \vec A(z) + i\sqrt{w(z)} \chi_i \vec B(z) - \\
\nonumber -\frac {\chi_e}{\sqrt{w(z)}} &\& \ \ \frac {1} {2i\pi \la} \int_{I_i}\frac{i\vec B(\z) w(\z) d\z}{ \z-z}  - 
\frac {\chi_i\sqrt{w(z)}} {2i \pi\la} \int_{I_e}\frac{\vec A(\z) d \z}{ w(\z)(\z-z)}
 =  \le[
\frac {\chi_e}{\sqrt{w}} \atop 
i \sqrt{w} \chi_i
\ri]. 
\eea
We can read  \eqref{13213} component-wise for the two functions $\varphi_{1,2}\in L^2(I)$ defined hereafter (restoring the  notation $\Gamma_{ji}$ for the entries of $\Gamma=(\vec A,\vec B)$)
\be\label{phi_j}
\varphi_j(z;\l) =  \frac{\G_{j1}(z;\l)}{\sqrt{w(z)}} \chi_{e}(z) + {i}{\G_{j2}(z;\l)}{\sqrt{w(z)}} \chi_i(z)\in L^2(I_i\sqcup I_e),
~~j=1,2.
\ee
Then, according to Theorem \ref{hatKkernel}, equation \eqref{13213} can be written as
\be\label{inteq}
\le(\Id - \frac {1} \l \wh K\ri) \varphi_1(z;\l) = \frac{\chi_{e}(z)}{\sqrt{w(z)}} \ ,\ \ 
\le(\Id - \frac  {1}  \l \wh K\ri) \varphi_2(z;\l) =i{\chi_{i}(z)}{\sqrt{w(z)}}.
\ee
Clearly,  both functions exist for $\l\not \in {\rm spec}(\wh K)$  and are given by
\be\label{varphi_12}
\varphi_1(z;\l)=(\Id +\wh R) \frac{\chi_{e}(z)}{\sqrt{w(z)}},~~~ \varphi_2(z;\l)=(\Id +\wh R) i{\chi_{i}(z)}{\sqrt{w(z)}}.
\ee
Moreover, they are analytic  functions in $\l \in \C \setminus {\rm spec}(\wh K)$ (with values in $L^2(I)$) and have no more than simple poles at the eigenvalues $\l_n$ because $\wh R(\l)$ is analytic in $\l$ and has simple poles by \eqref{reso-spect}.

Vice versa, if $\varphi_{1,2}$ are solutions of \eqref{inteq} for some fixed $\l$, 
they define $\Gamma_{j1}$ on $I_e$ and $\Gamma_{j2}$ on $I_i$
through \eqref{phi_j}. Inserting these values into the right sides of the corresponding two equations in \eqref{intsysGam},
we can reconstruct  solution $\Gamma(z;\l)$ of the RHP \ref{RHPGamma} for all $z\in\bar \C\setminus I$.
Thus we have proved the following corollary. 
\bc
\label{corequiv}
Let us fix some $\l\in\C\setminus\{0\}$. The solution $\Gamma(z;\l)$ of the RHP \ref{RHPGamma} exists if and only if both equations \eqref{inteq} simultaneously have a solution.
\ec

\bp
\label{exacteigenfunctions}
If $\l_n$ is an eigenvalue of $\wh K$, then
the functions $\phi_{n,j}(z)=\res {\l=\l_n}\frac{\varphi_j(z;\l)}{\l}, j=1,2,$ are proportional to each other and
satisfy $\wh K \phi_{n,j} =\l_n \phi_{n,j}$.
Moreover,
\be\label{round-phi_n}
\phi_{n,j}(z)=\frac{\chi_e(z)}{\sqrt{w(z)}} \res{\l=\l_n} \Gamma_{j1}(z;\l)  \frac {1}{\l} +
{i} \sqrt{w(z)}\chi_i(z)\res{\l=\l_n} \Gamma_{j2}(z;\l) \frac {1}{\l},
\ee
where at least one  of $\phi_{n,j}$ is not identical zero on $I$.
\ep
\begin{proof}
Consider $j=1$, with $j=2$ being treated similarly.
According to \eqref{inteq} and  \eqref{reso-spect}, we have 
\be
\varphi_1(z;\l) = 
\sum_{n} \frac \l{\l-  {\l_n}} P_n(  \frac {\chi_e}{\sqrt{w}} ), ~~~\text{so that}~~~\phi_{n,1}(z)= P_n(  \frac {\chi_e}{\sqrt{w}} ).
\ee
According to Corollary \ref{simple-K}, the eigenspace of $\wh K$ corresponding to $\l_n$ is one-dimensional, so that
$\phi_{n,j}(z)$, $j=1,2$, is either a corresponding eigenfunction or  identical zero. Therefore, $\phi_{n,j}(z)$
are proportional to each other.
 Equation \eqref{round-phi_n} follows from \eqref{phi_j}. 
 To prove the last statement recall that
the poles of $\varphi_j(z;\l)$ at $\l=\l_n$ can be at most simple. If their residue is zero then both
$\varphi_j(z;\l)$ must be analytic at $\l_n$. But then, according to Corollary \ref{corequiv}, there exists solution
$\Gamma(z;\l_n)$ to the RHP \ref{RHPGamma} with $\l=\l_n$. The obtained contradiction with 
Theorem \ref{theo-Gam-K} completes the proof.
\end{proof}
We shall use Proposition \ref{exacteigenfunctions} to extract the approximation of the eigenfunctions from the approximation of $\Gamma$,
obtained in Section \ref{secmodelRHPM}. 

Let us introduce 
\be
\label{t2}
t_2(\l) = 
\prod_{n} \le(1 - \frac {\l_n}{\l}\ri) {\rm e}^{\l_n/\l}
\ee
where the product is taken over all the eigenvalues of $\wh K$. Since  $\wh K$ is a Hilbert--Schmidt operator,
it is straightforward to show that the product is absolutely convergent for any $\l\in\C$  and that  
$t_2(\l)$  vanishes if and only if $\l=\l_n$, i.e., $\Id - \l^{-1}\wh K$  is {\em not} invertible.
In fact, $t_2(\l)$  is known as  (Carleman) regularized  determinant  (see \cite{SimonIdeals}, Ch. 3)
of the  Hilbert--Schmidt operator $\wh K$ that is denoted $t_2(\l) = 
\det_2 \le(\Id - \frac 1\lambda \wh K\ri)$.

Our aim now is to calculate the logarithmic derivative   $\pa_\l\ln t_2(\l)$ in terms of $\G$, which then allows to 
find $n_\mathcal C$ --
the number of eigenvalues  lying within a 
closed contour $\mathcal C$ in the $\l$-plane  -- by $n_{\mathcal C}  = 
\frac 1{2i\pi} \oint_{\mathcal C} \pa_\lambda \ln {t_2}(\lambda)  d \l.$ This expression will allow us 
to localize the eigenvalues of $\wh K$ using the approximation of $\G$, obtained in Section \ref{secmodelRHPM}.
We see by elementary calculus that 
\be
\pa\ln t_2(\l) = \frac 1{\l^2} \sum_{n}\le( \frac {\l_n}{1-\frac {\l_n}\l} - {\l_n} \ri) =\frac 1{\l^2} \mathrm {Tr} (\wh K \wh R), \label{dlndet}
\ee
where ${\rm Tr}\,\wh F= \int_I F(x,x)dx$  denotes the trace of a trace-class integral operator $\wh F: L^2(I)\ra  L^2(I)$ with the kernel $F(x,y)$ (see e.g. \cite{SimonIdeals}).
Indeed, observe that: i) the series in \eqref{dlndet} is absolutely convergent; ii)  
the operator $\wh R\wh K$ is of the trace class since both $\wh K, \wh R$ are Hilbert--Schmidt operators, and;
iii) the series in \eqref{dlndet} is
the sum of all the eigenvalues of the trace class operator
$\l\wh R -\wh K=\wh R\wh K$, see \eqref{whR1}. 
The following proposition expresses $\mathrm {Tr} (\wh K \wh R)$ through the matrix $\G$.
\bp
\label{logdet}
Let  $\wh K$ be the integral operator defined in Theorem \ref{hatKkernel}, $t_2(\lambda)$  be its (regularized) determinant, see \eqref{t2}, and $\G(z;\l)$ satisfies the RHP \ref{RHPGamma}.    
Then
\be\label{dlnt}
\pa_\lambda \ln t_2(\lambda) =- \frac 1 { \l i\pi} \oint_{\hat I_i}\le(\Gamma_{21}\Gamma_{12}' - \Gamma_{11}\Gamma_{22}'\ri) d z,
  \ee
  where $\hat I_i$ is a clockwise loop surrounding $I_i$, $\G_{jk}$'s are the entries of $\G$, and prime denotes the derivative with respect to $z$. 
  \ep
\begin{proof}
Expressing the kernel of $\wh R \wh K$ through $R,K$, see \eqref{resolvent}, \eqref{IIKs}, we calculate the right hand side of 
\eqref{dlndet} as 
\begin{equation}
\begin{split}
\label{trRK}
\mathrm{Tr} \le( \wh K \wh R \ri) = &
\int_I d x \int_{I} d \z \frac{
\vec g^t(x) \Gamma^{-1}(x;\l) \Gamma(\z;\l)\vec f(\z)} {2i\pi \l (\z-x)} \frac { \vec g^t(\z) \vec f(x)}{2i\pi (x-\z)}\\
=&\int_{I_e} \frac{-d x}{2\pi w(x)} \int_{I} d \z 
 \frac{
 (\Gamma^{-1}(x;\l) \Gamma(\z;\l)\vec f(\z)\vec g^t(\z))_{21}} {2i\pi \l (\z-x)^2}\\
  &\qquad +
\int_{I_i} \frac{w(x)d x}{2\pi } \int_{I} d \z 
 \frac{
 \big(\Gamma^{-1}(x;\l) \Gamma(\z;\l)\vec f(\z)\vec g^t(\z)\big)_{12}} {2i\pi \l (\z-x)^2}.
\end{split}
\end{equation}
Note that $\vec f(z)\vec g^t(z)=\le[\begin{array}{cc} 0 &     \frac{i\chi_e(z)}{w(z)} \\ 
 {-i}\chi_i(z){w(z)} & 0
 \end{array} 
\ri]
$
and, therefore, the first integral over $I$ in  \eqref{trRK} can be replaced by the same integral over $I_i$, whereas
 the second integral over $I$  can be replaced by the same integral over $I_e$. Thus, both integrals in \eqref{trRK} are
nonsingular.

Considering the $j,k$ entry of differentiated equation \eqref{integ1}, we obtain 
\be\label{G'}
\G'_{jk}(z;\l)=- \int_{I} d \z \frac{
 \big( \Gamma(\z;\l)\vec f(\z)\vec g^t(\z)\big)_{jk}} {2i\pi \l (\z-z)^2}
\ee
provided that the integral is well defined.
But this is exactly the case in  \eqref{trRK} where the integrals are nonsingular. Thus, \eqref{trRK} and \eqref{G'} imply
\begin{equation}
\begin{split}
   \pa_\l\ln t_2(\l) &=\int_{I_e} \frac{d x}{2\pi\l^2 w(x)}
 \le(
 \Gamma^{-1}(x;\lambda) \Gamma'(x;\l)\ri)_{21} 
 -
\int_{I_i} \frac{w(x)d x}{2\pi\l^2  } 
 \le(
 \Gamma^{-1}(x;\lambda) \Gamma'(x;\lambda)\ri)_{12}
\\
=
  -\frac 1 {\l} \frac 1 {2i\pi}& \int_{I_i} \frac{iw(z) }{\l}\le(\Gamma_{22}\Gamma_{12}' - \Gamma_{12}\Gamma_{22}'\ri) d z -\frac 1 {\l} \frac 1 {2i\pi} \int_{I_e} \frac 1{\l iw(z)} \le(\Gamma_{11}\Gamma_{21}' - \Gamma_{21}\Gamma_{11}'\ri)d z.
\end{split}
\label{step1}
\end{equation} 
Note that we did not specify the boundary value ($+$ or $-$) in the elements of $\Gamma$ above,  because the second  
column is analytic across $I_i$, and the first column is analytic across $I_e$. These two facts follow from the jumps 
\eqref{endpcond-out}-\eqref{endpcond-inn}. 
On the other hand, according to \eqref{rhpGam},  $\frac{iw(z)}\l \Gamma_{j2}(z;\l) = \Gamma_{j1+}(z;\l) - \Gamma_{j1-} (z;\l)$ on $I_i$, 
whereas 
 $\frac1{iw(z)\l} \Gamma_{j1}(z;\l) = \Gamma_{j2+}(z;\l) - \Gamma_{j2-} (z)$ on $I_e$ ($j=1,2$). Thus,  we have
\begin{equation}
\begin{split}
 &\pa_\l\ln t_2(\l) =\\
 &= -\frac 1 {\l} \frac 1 {2i\pi} \int_{I_i}\le(J(\Gamma_{21})\Gamma_{12}' -J( \Gamma_{11})\Gamma_{22}'\ri)d z  -\frac 1 {\l} \frac 1 {2i\pi} \int_{I_e} \le(J(\Gamma_{12})\Gamma_{21}' -J( \Gamma_{22})\Gamma_{11}'\ri)d z \\
&=  \frac i {2\pi \l } \oint_{\hat I_i}\le(\Gamma_{21}\Gamma_{12}' - \Gamma_{11}\Gamma_{22}'\ri)d z  +
 \frac i {2\pi \l} \oint_{\hat I_e} \le(\Gamma_{12}\Gamma_{21}' - \Gamma_{22}\Gamma_{11}'\ri)d z
 \\
&=  
 \frac i {\pi \l } \oint_{\hat I_i}\le(\Gamma_{21}\Gamma_{12}' - \Gamma_{11}\Gamma_{22}'\ri) d z,
\end{split}
\end{equation} 
where $\hat I_e$ is a clockwise loop surrounding $I_e$ and $J(h)$ denotes the jump of $h$ across the contour of integration.
In the last step, we have used integration by parts on the 
first term followed by contour deformation (notice that the 
integrand is $\mathcal O(z^{-2})$ at infinity).
\end{proof}
\br
The use of Proposition \ref{logdet} will be that of detecting the presence of an eigenvalue in much the same way as Evans' functions are used for detecting eigenvalues of differential operators. 
\er

\section{Asymptotic solution for \texorpdfstring{$\G(z;\la)$}{Gza} when \texorpdfstring{$\la$}{zla} is small }\label{sec-solGam}

We now start our analysis of the behavior of the solution $\Gamma(z;\l)$ of the RHP \ref{RHPGamma} as $\l\ra 0$ using the steepest descent method (see \cite{DKMVZ}, \cite{Deift}).

{\bf Notation:} The segments $[a_{2j-1}, a_{2j}],\ j=1,\dots g+1$ shall be called the {\bf main arcs} (or branchcuts) and denoted by $\g_j$; 
the segment $[a_{2g}, a_{2g+1}]$ shall be denoted $c_0$, the segments $[a_{2j}, a_{2j+1}],\ j=1,\dots, g-1$ shall be denoted by $c_j$: these
segments shall be called {\bf complementary arcs} 
{(or   gaps)}. We also denote 
$c_g=(-\infty,a_1]\cup [a_{2g+2}, \infty)$.   All the main and  complementary arcs are left to right oriented (see Figure \ref{homology}), 
{whereas $c_g$ is oriented right to left.}

We will use the hyperelliptic Riemann surface $\Rscr$ defined by 
\be
\label{hypersurf}
R^2 = \prod_{j=1}^{2g+2}(z-a_j),
\ee
and by $R(z)$ we will understand the unique analytic function on $\C \setminus \bigcup_{j=1}^{g+1} [a_{2j-1},a_{2j}]$ that satisfies \eqref{hypersurf} and behaves like $z^{g+1}$ near $z=\infty$. 
Points on the Riemann-surface $\mathcal R$ will be denoted by $p=(z,R)$, and they will be understood as pairs of values (of course, once $z$ is fixed, $R$ has at most two distinct values).

We define the $A$ and $B$ cycles according to the general prescriptions \cite{FarkasKra} that are adapted to our problem (cf. Figure \ref{homology}):
\begin{itemize}
 \item for $k=1,\dots, g-1$ the cycle $A_k$ is $[a_{2k},a_{2k+1}]$ on both sheets and  the corresponding $B_k$ is a clockwise loop around the branchcut $[a_{2k+1},a_{2g}]$;
\item the cycle $A_g$ is the union of $(-\infty,a_1]\cup [a_{2g+2},\infty)$ on both sheets. 
The orientation is shown in Figure \ref{homology}, where the two sheets are two copies of the $\C$ plane with cuts omitted and such that on the top sheet $p= (z,R(z))$ and on the bottom sheet $p=(z,-R(z))$.
 The corresponding cycle $B_g$ is a clockwise loop embracing the branchcut $[a_{2g+1},a_{2g+2}]$;
 \item for convenience we also define $A_0$ to be the cycle $[a_{2g},a_{2g+1}]$.
\end{itemize} 
\begin{figure}
\begin{center}
\resizebox{0.9\textwidth}{!}{\input{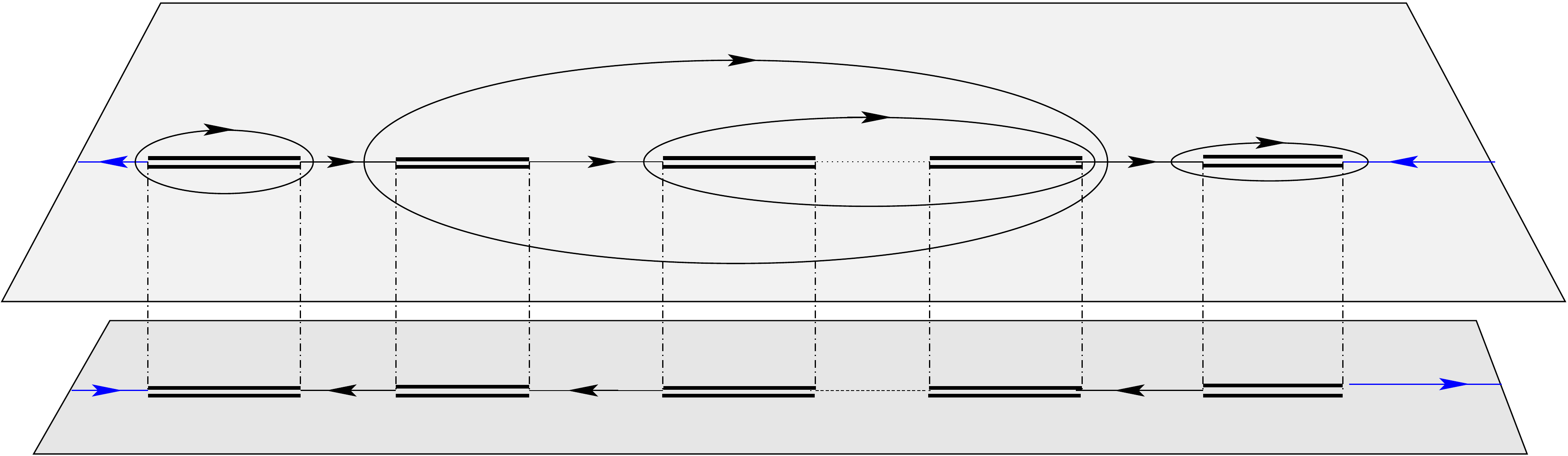_t}}
\end{center}
\vspace{-10pt}
\caption{The choice of contours $A, B$.}
\vspace{-12pt}
\label{homology}
\end{figure}

The normalized differentials of the first kind $\o_jdz$ are defined as the (unique) holomorphic differentials satisfying
\be
 \qquad \oint_{A_j} \omega_k(z)  d z  = \delta_{jk},~~~~j,k=1,\cdots,g.         \label{normalization}
\ee
It is well known that $\omega_j(z) = \frac{ P_j(z) }{R(z)}$, where all
$P_j(z)$ are polynomials of degree $\leq g-1$. They are explicitly computed as follows: 
\be
\label{1stkind}
\vec \omega^t(z)= \le[
\begin{matrix}
\omega_1(z),\dots,
\omega_g(z)
\end{matrix}
\ri] = \frac { \le[\begin{array}{c}
1,\dots, z^{g-1}
\end{array}
\ri]}{R(z)}\mathbb A^{-1}, \ \ \ 
[\mathbb A]_{ji}:= 
\oint_{A_j} \frac {\z^{i-1} d \z}{R(\z)}. 
\ee
It is well known \cite{FarkasKra} that the matrix $\mathbb A$ is nondegenerate.

\bl
\label{lemmazeroes}
Let $\omega_j(z)dz = \frac {P_j(z)}{R(z)}dz$, $j=1,\cdots,g$,  be
 the normalized first-kind differentials. Then  each  polynomial $P_j(z)$ is  {\em real} 
and has exactly one zero in the interior of each $c_k$, where $k=1,\cdots, g$, and $k\neq j$. 
In particular $P_1(z)$ has one zero in each interval $[a_{2m}, a_{2m+1}],\ m=2,\dots, g-1$, and 
one in $[-\infty, a_1]\cup[a_{2g+2},\infty]$, where we understand that if this last zero is at infinity, 
the polynomial $P_1$ is of degree $g-2$. 
\el
\begin{proof}
The reality of $P_j$'s follows because the matrix $\mathbb A$ is real. 
The radical $R(z)$ is real and has constant sign in each finite $c_k$ 
(this sign alternates from one $c_k$ to the next). Thus to have a zero integral in a given $c_k$, the polynomial 
$P_j(z)$ must have at least one root in it. The normalization \eqref{normalization} requires $g-1$ 
vanishing conditions. This forces the roots of $P_j(z)$ to lie in the corresponding complementary arcs. 
The condition $\oint_{A_k}\omega_k dz=1$ fixes the proportionality constant. The statement about $P_1$ follows at once.
\end{proof}

Lemma \ref{lemmazeroes} implies that $P_1(z)$ 
has a zero at some $z_0\in [-\infty,a_1) \cup (a_{2g+1} ,\infty]$ and one zero in each inner gap $[a_{2j}, a_{2j+1}], \ j=2,\dots, g-1$. 
We now define the $\gg$ function as 
\be
\label{defg}
\gg(z) = \frac 12 - 2 \int_{a_1}^z\!\!\!  \omega_1d z,
\ee
{where the path of integration is in $\C \setminus [a_1,a_{2g+2}]$ (note that $\gg(z)$ is  single--valued on this domain since $\omega_1$ has no residue at infinity).} 
\begin{figure}[t]
\vspace{-20pt}
\includegraphics[width=0.5\textwidth]{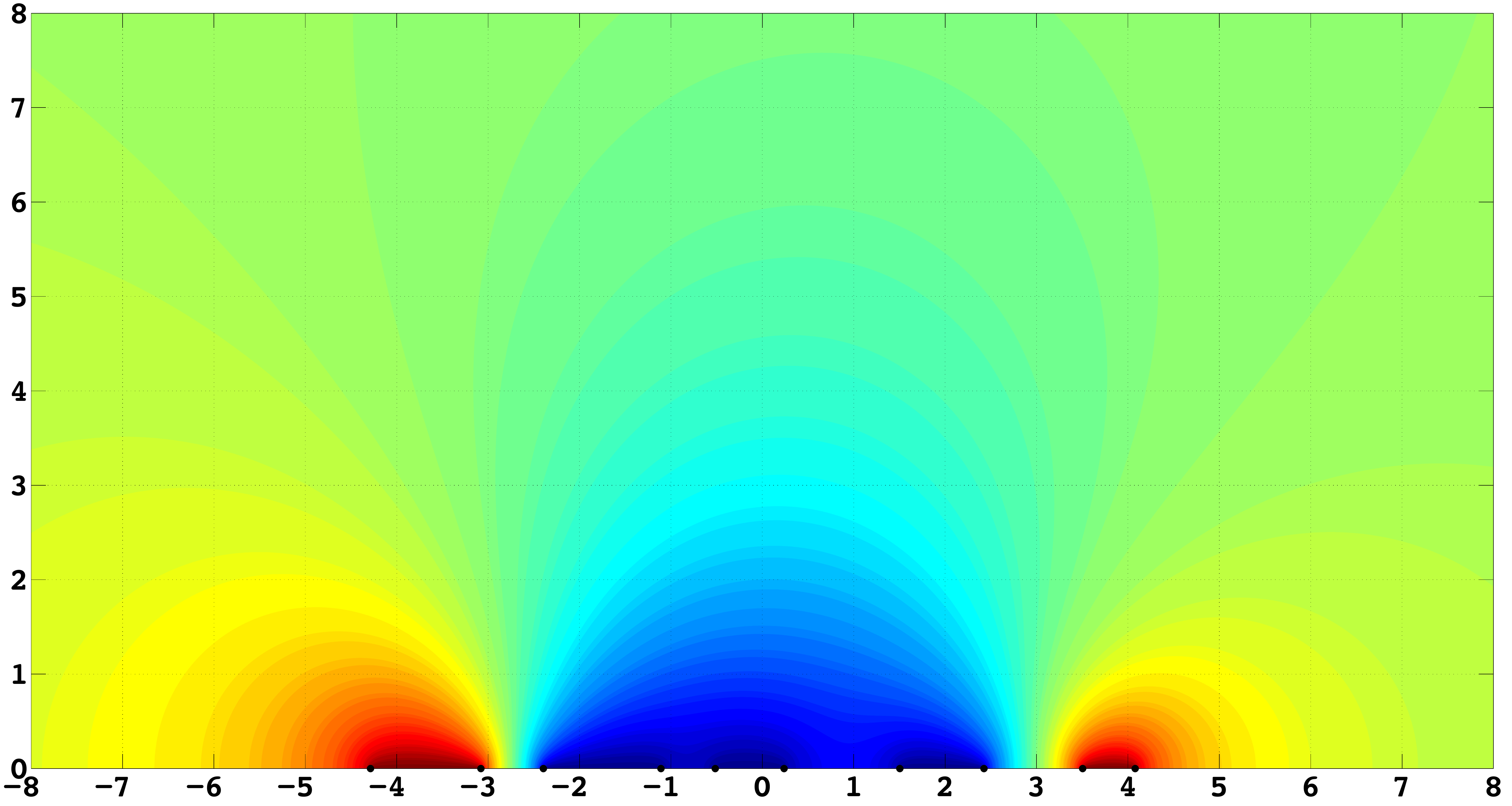}
\includegraphics[width=0.4\textwidth]{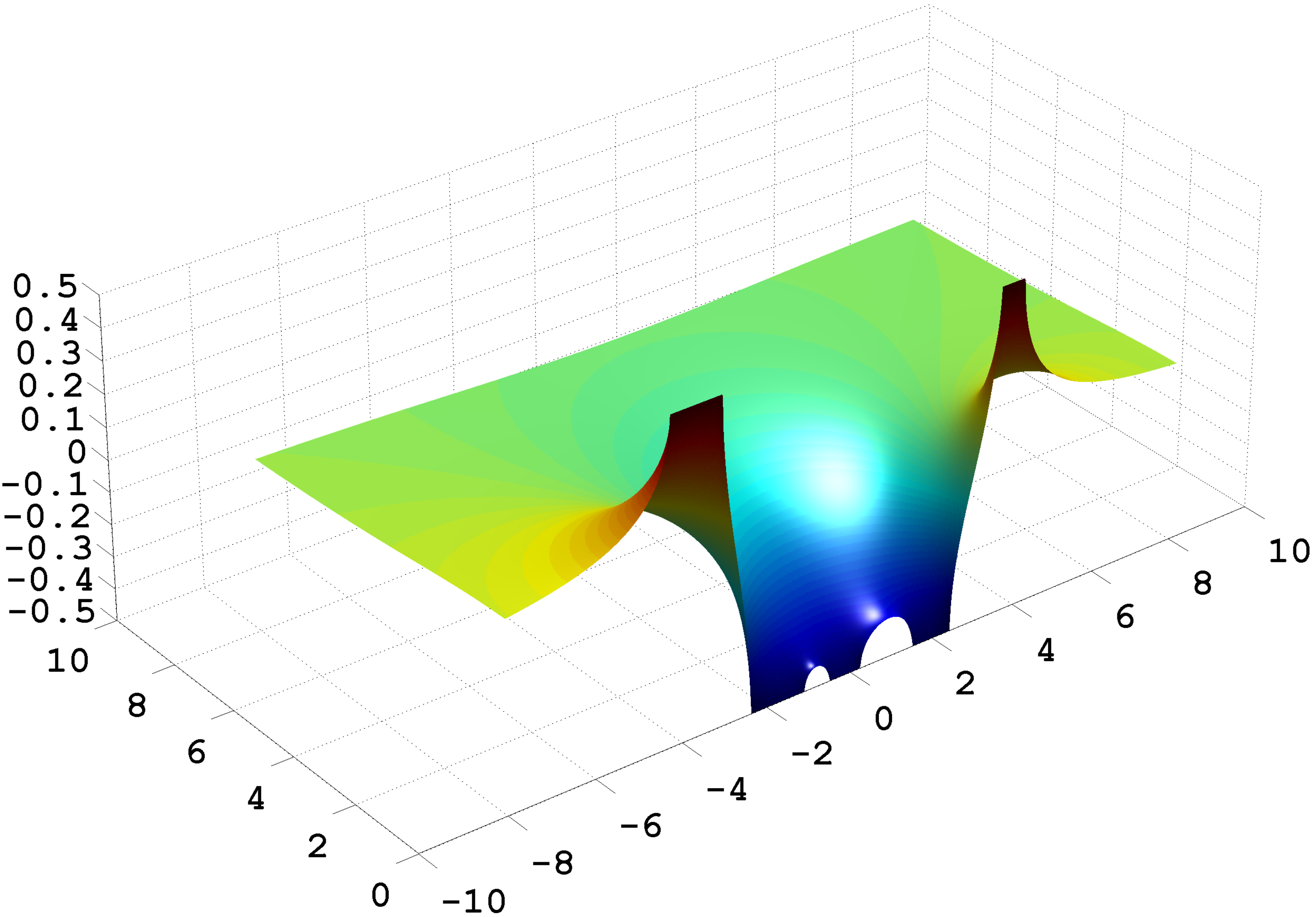}
\caption{The graph of $\Re \gg(z)$, 2D (left) and 3D (right) views for an example of genus $g=4$.
}
\vspace{-10pt}
\label{Regz}
\end{figure}

\bp 
\label{propositiongg}
{\bf (1)} The function \eqref{defg} satisfies  the jump conditions
\be\label{geqm}
\gg_+(z)+\gg_-(z)=-1~~~~{\rm on}~I_i,~~~~~\gg_+(z)+\gg_-(z)=1~~~~{\rm on}~I_e
\ee
\be\label{geqc}
~{\rm and}~~~~~\gg_+(z)-\gg_-(z)=i\O_j~~~~{\rm on}~c_j,~~j=0,1,\cdots,g-1,
\ee 
where 
$\O_0= \frac 4 i \sum_{k=1}^g\int_{a_{2k-1}}^{a_{2k}}\!\!\!\!\! \omega_1dz\in \R$ and
 $\Omega_j = \frac 4 i \sum_{k=1}^j\int_{a_{2k-1}}^{a_{2k}} \!\!\!\!\! \omega_1dz\in \R$.
 \\
{\bf (2)}
The imaginary part of $\gg_+(z)$ satisfies 
\be
\label{signs}
\Im \gg_+' (z)  = -2\Im \omega_1(z)  \le\{
\begin{array}{ll}
>0 & \ \ z\in I_e, \\
<0 &\ \  z\in I_i. 
\end{array}
\ri.
\ee
{\bf (3)} Let $N_i$ denote a small neighborhood of $I_i$, so that $N_i\cap I_e=\varnothing$. Similarly, let
 $N_e$ denote a small neighborhood of $I_e$, so that $N_e\cap I_i=\varnothing$ ($N_e$ consists of two disjoint
regions around $\g_1$ and $\g_{g+1}$, respectively). Then
\be
\label{signdistro}
\Re (2\gg(z) + 1) > 0, \ \ \ z\in N_i\setminus I_i,\qquad 
\Re (2\gg(z) - 1) <  0, \ \ \ z\in N_e\setminus I_e.
\ee
{\bf (4)} The function $\gg(z)$ is analytic in $\bar\C\setminus[a_1,a_{2g+2}]$.
\ep
\begin{proof}
{\bf (1)} This follows from the fact that $R_+ = -R_-$ on the corresponding sides of each main arc $\g_j$, $j=1,\dots, g+1$, and the 
normalization \eqref{normalization}. In particular, if $z\in I_e$, then $\int_{a_1}^z\o_1=0$, so the second jump condition
\eqref{geqm} holds. If  $z\in I_i$, then $\int_{a_1}^z\o_1=1$ and the first jump condition
\eqref{geqm} holds. Finally,  \eqref{geqc} holds  if $z\in c_j{=[a_{2j}, a_{2j+1}]}$, 
$j=1,\dots,g-1$, or $z\in c_0{=[a_{2g}, a_{2g+1}]}$,
see Figure \ref{homology}.

{\bf (2)} Notice that $\arg R(z)=0$ when $z>a_{2g+2} $ and $\arg R_+(z)$ increases by $\frac {i\pi}2$ each time we pass 
a point $a_j$, $j=1,\cdots,2g+2$
from right to left. 
Then $\arg R_+(z)=\frac {i\pi}2$ when $z\in \g_{g+1}$ and $\arg R_+(z)=-\frac {i\pi}2$ when $z\in I_i$.
According to Lemma \ref{lemmazeroes}, the polynomial $P_1(z)$, where
$\o_1=\frac{P_1(z)dz}{R(z)}$, has one zero in each $c_j$, $j=2,\cdots, g-1$. Thus the sign of 
$\Im \o_1$ is the same on  all the main arcs $\g_k \subset I_i$. The sign of $\Im \o_1$ on the main arcs $\g_1$ and $\g_{g+1}$ (they form  $I_e$) is also the same, but opposite
to the sign on $I_i$. Since $\o_1$ is positive on $c_1$, we conclude that  $\Im \o_1>0$ on $I_i$ and $\Im \o_1<0$ on $I_e$. This implies  \eqref{signs}. 

{\bf (3)}  According to (1),
$\Re (2\gg(z)+1) \equiv 0$ on $I_i$ and $\Re (2\gg(z)-1)\equiv 0$ on $I_e$. The imaginary part of $\gg_+$ is {\em increasing} on $I_e$. 
Hence, by Cauchy-Riemann equations, the real part is {\em decreasing} in the normal direction and thus the claim. 
A similar argument works  for $I_i$.
 
{\bf (4)} This follows from \eqref{defg} and the integrability of $\omega_1$ as $z\ra\infty$. 
\end{proof} 

 We will also need an additional auxiliary function $d(z)$. To this end we first introduce the  matrix
\bea\label{T-n}
 T= \mathbb A L=2\le[ 
\begin{array}  {c c c c} 
\int_{c_1}\frac{d\z}{R(\z)} &\cdots & \int_{c_{g-1}}\frac{ d\z}{R(\z)}& \int_{c_{0}}\frac{ d\z}{R(\z)} \\
\vdots &\vdots &\vdots & \vdots \\
\int_{c_1}\frac{\z^{g-1} d\z}{R(\z)} & \cdots&\int_{c_{g-1}}\frac{\z^{g-1} d\z}{R(\z)}&\int_{c_{0}}\frac{\z^{g-1} d\z}{R(\z)} 
\end{array}
\ri],\\
  L=
\le[
\begin{array}{ccccc}
1 & 0 & \dots&0 & -1\\
0 & 1& 0\dots &0& -1\\
& &\ddots &\\
&&\dots &1&-1\\
0&0&\dots &0&1
\end{array}
\ri]\nonumber
\eea
and $\mathbb A$ is defined in \eqref{1stkind}.
Then we define 
\be\label{dz}
d(z)=\frac{R(z)}{2\pi i}\le(-\sum_{j=1}^{g+1}\int_{\g_j}\frac{\ln w(\z)d\z}{(\z-z)R_+(\z)}+
\sum_{j=0}^{g-1}\int_{c_j}\frac{
{i\d_j}
d\z}{(\z-z)R(\z)}\ri),
\ee
where the vector $\vec\d=[\d_1,\dots,\d_{g-1},\d_0]^t$  is given by 
\be\label{delta}
\vec{\d}=2\pi T^{-1}\left\{2\int_{a_1}^\infty - \int_{a_1}^{a_{2g+2}}\right\}\le[ 
\begin{array}  {c c  } 
\frac{d\z}{R_+(\z)}, \frac{\z d\z}{R_+(\z)} ,
\dots,
\frac{\z^{g-1} d\z}{R_+(\z)} 
\end{array}
\ri]^t.
\ee

\bp
The function $d(z)$given by \eqref{dz}
 is analytic on $\bar \C \setminus [a_1,a_{2g+2}]$ (in particular, analytic  at infinity) and satisfies the jump conditions 
\be
d_+  +d_- =- \ln w~~{\rm on~} \g_j,~ j=1,\dots g+1,~~~\qquad
 d_+  -d_- =i\d_j~~{\rm on~} c_j,~ j=0,\dots, g-1.
\label{propertyDelta}
\ee
\ep
\begin{proof}
Given the jump conditions  \eqref{propertyDelta}, expression \eqref{dz} follows from the Sokhotski--Plemelj formula. However,
for an arbitrary $\vec\d$, $d(z)=O(z^{g})$ as $z\ra\infty$. Expanding $\frac{1}{\z-z}$ for large $z$ (and bounded $\z$),
we obtain the system
\bea\label{del_j}
\hspace{5mm}i\sum_{j=0}^{g-1} \d_j\int_{c_j}\frac{\z^md\z}{R(\z)}=\sum_{j=1}^{g+1}\int_{\g_j}\frac{\z^m\ln w(\z)d\z}{R_+(\z)}=
i\pi\left\{\int_{a_{2g+2}}^\infty+\int_{a_1}^\infty\right\}
\frac{\z^md\z}{R_+(\z)},\\
\nonumber {\rm where}~~m=0,1,\dots,g-1,
\eea
of $g$ linear equations for $g$ unknowns $\d_j$, $j=0,1,\dots,g-1$, which is solved by  \eqref{delta}.  \end{proof}

Using similar considerations, we calculate

\be\label{jump-constants-n}
\vec{\O}=-4iT^{-1}\sum_{l=2}^{g}\int_{\g_l}\le[ 
\begin{array}  {c c  } 
\frac{d\z}{R_+(\z)},
 \frac{\z d\z}{R_+(\z)} ,
\dots,
\frac{\z^{g-1} d\z}{R_+(\z)} 
\end{array}
\ri]^t,~~~~~
\ee
where $\vec{\O}=(\O_1,\dots,\O_{g-1},\O_0)^t$ and $\Omega_j$'s are the constants in \eqref{geqc}.

\bp\label{prop-Del}
The function ${\rm e}^{d(z)}$ has the behavior ${\rm e}^{d(z)}  =C (z-a_j)^{-\frac  1 4} (1 + o(1))$  as $z\to a_j$,  where $C\neq 0$,
near the branchpoints $a_j$, $j=1,2g+2$, and 
is bounded at the remaining branchpoints $a_j$, $j=2,\cdots,2g+1$.
 As a consequence, the functions $\rho(z):= i w(z) {\rm e}^{2d(z)}$ and $\frac 1 {\rho(z)}$ are bounded near $a_1, a_{2g+2}$ 
(as well as  near all the other branchpoints). Here $w(z)$ (cf. \eqref{hilb-inv}) is understood as analytic on $\C \setminus (-\infty,a_1]\cup [a_{2g+2},\infty)$, and thus $\rho$ is analytic on $\C \setminus \R$.
 \ep
\begin{proof} According to \eqref{dz}, $d(a_j)=0$ for $j=2,\cdots,2g+1$. So, we consider the behavior of $d(z)$ at $z=a_1$,
where $\ln w(\z)$ has a logarithmic singularity. According to \cite{Gakhov}, Section 8.6, the behavior of $d(z)$ in a small
neighborhood of $a_1$ and away from the cut $\g_1$ is given by
\be\label{Gakh-log}
d(z)=-(z-a_1)^\hf\frac{\ln(z-a_1)-i\pi}{4(z-a_1)^\hf} (1+o(1))=\left(-\frac{\ln(z-a_1)}{4}+\frac{i\pi}{4}\right)(1+o(1)),
\ee
which implies the required behavior at $z=a_1$. Similar arguments work at $z=a_{2g+2}$. 
\end{proof}

\subsection{Transformation of the RHP}
Let $\k=-\ln \la$.  Then $\k>0$ when $\la\in(0,1)$ and $\k\ra\infty$ as $\l\ra 0$.
In this subsection we very briefly describe the nonlinear steepest descent method 
of Deift and Zhou, which allows to reduce the original RHP \ref{RHPGamma} to its leading 
order term, known as the  model RHP (see RHP \ref{modelRHP}), in the limit $\k\ra +\infty$.
The $g$--function $\gg(z)$ and, to a lesser extent, $d(z)$ are important parts of this reduction.  
A detailed description of the nonlinear steepest descent method can be found, for example,  
in \cite{Deift}, \cite{DeiftZhou}.
The first substitution
\begin{equation}\label{y-def}
Y(z;\k)=e^{-(\varkappa \gg_\infty+d_\infty)\sigma_3}\Gamma(z;e^{-\k}) e^{(\varkappa \gg(z)+d(z))\sigma_3},
\end{equation}
where $ d(\infty) = d_\infty\in \C$ and $\gg(\infty)= \gg_\infty\in\C$, reduces the RHP \ref{RHPGamma} to the 
following RHP.
\begin{problem}\label{prob-Y}
Find a $2\times 2$ matrix-function $Y(z;\k)$ with the following properties:
\begin{enumerate}
\item[{\bf (a)}] $Y(z;\k)$ is analytic in $\C\setminus[a_1,a_{2g+2}]$; 
\item[{\bf (b)}] $Y(z;\k)$ satisfies the jump conditions
\bea
\label{jumpY}
\begin{split}Y_+& =Y_- \left[
\begin{matrix}
 e^{(\k \gg +d)_+ -(\k\gg +d)_- } & 0 \\ 
{iw} e^{\k(\mathcal \gg_+ +\gg_- +1) + d_+ +d_-} & e^{-(\k \gg +d)_+ +(\k\gg +d)_- } 
\end{matrix}\right],  \ \ z\in  I_i,
 \\
Y_+&=Y_-\left[\begin{matrix} e^{(\k \gg +d)_+ -(\k\gg +d)_- } & 
 -\frac iw e^{-\k(\mathcal \gg_+ + \gg_- -1) - d_+-d_-  } \\ 0 & e^{-(\k \gg +d)_+ +(\k\gg +d)_- }\end{matrix}\right],\ \ z\in  I_e 
 \\
 & Y_+=Y_- e^{[(\k\gg+d)_+ -(\k\gg+d)_-] \sigma_3},~~ z\in[a_{2j},a_{2j+1}],~ j =1,\dots, g ;
 \end{split}
\eea
\item[{\bf (c)}]
$Y=\1+O(z^{-1})~~~~{\rm as} ~~z\ra\infty,$
 and;
\item[{\bf (d)}] Near the branchpoints (we indicate the behavior for the columns if these have different behaviors)
\be\label{endpcondY}
\begin{split}
Y(z;\k)&=[ \mathcal O(1), \mathcal O(z-a_{j})^{-\frac 1 2}],\ j=1,2g+2;\\
 Y(z;\k)&=O(\ln(z-a_j)), ~~~j=2,\dots, 2g+1.
 \end{split}
\ee
\end{enumerate}
\end{problem}

The jump matrices in \eqref{jumpY} can be calculated directly from \eqref{y-def} and \eqref{rhpGam}.
The branchpoint behavior \eqref{endpcondY} follows from Proposition \ref{prop-Del} and \eqref{endpcond-out}-\eqref{endpcond-inn},
where we take into the account that the first column $\vec A(z)$ of $\G(z;e^{-\k})$ is analytic on $I_e$ (see the proof of Proposition \ref{prop-RHPG}).

We can now rewrite  the jump conditions \eqref{jumpY}  as 
\be
\label{jumpY2}
\begin{split}
Y_+&=Y_- 
\le[\begin{matrix}
1 & \frac { {\rm e}^{-\k (2\gg_{_-} +1)-2d_- }}{iw}\\
0 & 1
\end{matrix}\ri]
\le[\begin{matrix}
0& i\\ 
 i & 0
\end{matrix}\ri]
\le[\begin{matrix}
1 & \frac { {\rm e}^{-\k (2\gg_{_+} +1)-2d_+}}{iw}\\
0 & 1
\end{matrix}\ri]
\text{ on } I_i,\\
Y_+&=Y_-\le[\begin{matrix}
1 & 0\\
 {iw}{ {\rm e}^{ \k (2\gg_{_-} -1)+2d_-}} & 1
\end{matrix}\ri]
\le[\begin{matrix}
0& -i \\ 
-i& 0
\end{matrix}\ri]
\le[\begin{matrix}
1 & 0\\
{iw} { {\rm e}^{ \k (2\gg_{_+} - 1)+2d_+ }} & 1     
\end{matrix}\ri]
\text{ on } I_e,\\
Y_+ &= Y_- {\rm e}^{i(\k \Omega_j + \d_j)\s_3} \text { on } c_j, \ j=0,\dots g-1,
\end{split}
\ee
which can be checked by direct matrix multiplication, together with the fact that $-\ln w - d_+ - d_- \equiv 0$ on the main arcs and $\gg_+ +\gg_- \pm 1\equiv 0$ on $I_i$ and $I_e$ respectively,
see \eqref{geqm} and \eqref{propertyDelta}. 
In both factorizations of the jump matrices in \eqref{jumpY2} (on $I_i$ and $I_e$), 
the left and right (triangular) matrices admit analytic extension on the left/right vicinities of the corresponding
main arcs because they are boundary values of analytic matrices in those vicinities. This suggests 
opening of  the lenses $\pa \mathcal L_{e}^{(\pm)},~ \pa \mathcal L_{i}^{(\pm)}$ around the corresponding main arcs,
see Figure \ref{Lenses} top  panel,
and introduction of the new unknown matrix
\bea
Z(z;\k) =
\le\{\begin{array}{ll}
 Y(z;\k) & \text{ outside the lenses,}  \\
\ds  Y(z;\k) \le[\begin{matrix}
1 & 0\\
\mp {iw}{ {\rm e}^{ \k (2\gg -1)+2d}} & 1
\end{matrix}\ri] & z\in {\mathcal L_e^{(\pm)}},\\
\ds  Y(z;\k) \le[\begin{matrix}
1 & \frac{\mp 1}  {iw}{ {\rm e}^{ -\k (2\gg +1)-2d}} \\
0& 1
\end{matrix}\ri] & z\in  \mathcal L_i^{(\pm)}.
 \end{array}
 \ri.
 \label{422}
\eea
\begin{figure}
\begin{center}
\resizebox{0.7\textwidth}{!}{\input{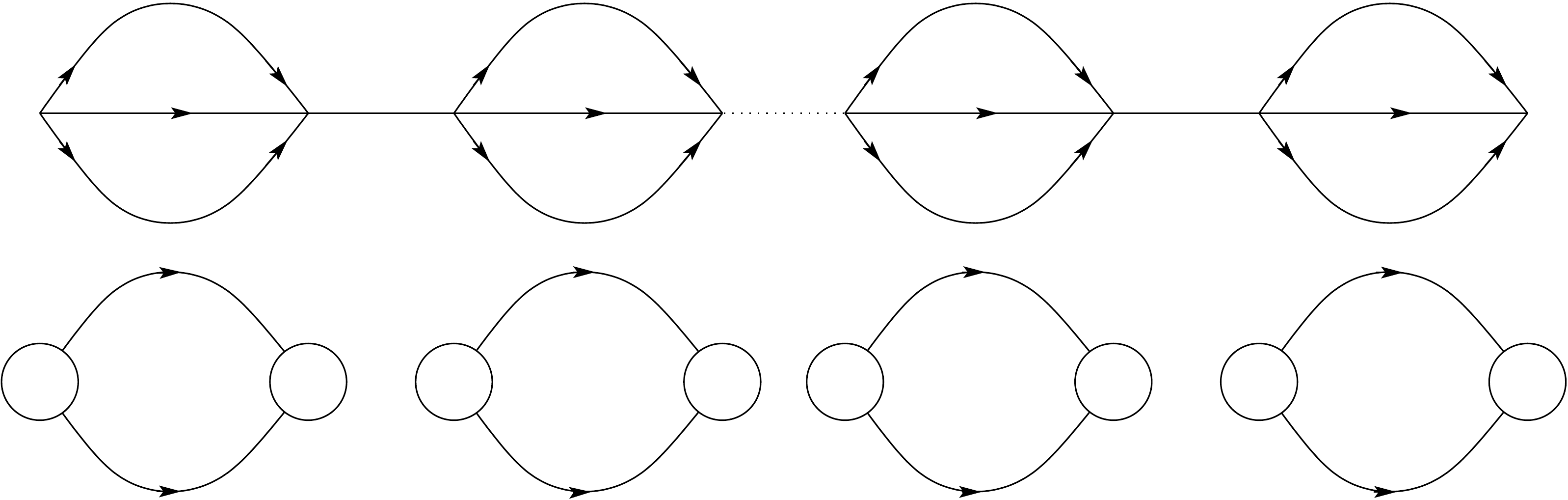_t}}
\end{center}
\vspace{-10pt}
\caption{The regions of the lenses $\mathcal L^{(\pm)}_{i,e}$ (above) and the jumps of the error matrix $\mathcal E$ (below).}
\label{Lenses}
\end{figure}
Consequently, the new matrix $Z$ satisfies the following RHP.
\begin{problem}
\label{ZetaRHP}
Find the matrix $Z$, analytic on the complement of the arcs of Figure \ref{Lenses}, top panel, satisfying  the jump conditions (note also the orientations marked in Figure \ref{Lenses})
\bea
\label{RHPZ}
\\
Z_+(z;\k) = Z_-(z;\k) \le\{
\begin{array}{ll}
{\rm e}^{i(\k \Omega_j + \d_j)\s_3} & z\in  c_{j}, \ j=0,\dots g-1,\\
 \le[\begin{matrix}
1 & 0\\
{iw}{ {\rm e}^{ \k (2\gg -1)+2d}} & 1
\end{matrix}\ri]  & z\in  \pa \mathcal L_{e}^{(\pm)} \setminus \R,\\
\le[\begin{matrix}
1 & \frac{1}  {iw}{ {\rm e}^{ -\k (2\gg +1)-2d}} \\
0& 1
\end{matrix}\ri]  & z\in \pa \mathcal L_{i}^{(\pm)} \setminus \R,\\
i\s_1 & z\in I_i,\\
-i\s_1 & z\in I_e,
\end{array}
\ri.
\nonumber
\eea
normalized by 
\be
Z(z;\k) \to \1\ ,\ \ z\to \infty, \label{ass-Z}
\ee
and with the same endpoint behavior   as $Y$   near the endpoints $a_j$'s, see \eqref{endpcondY}. 
\end{problem}
We shall provide a uniform approximation to the RHP \ref{ZetaRHP} (which is entirely equivalent to the RHP 
\ref{RHPGamma} for $\Gamma$).

Since the real part of $\gg$ satisfies conditions \eqref{signdistro}, the off-diagonal entries in the jump matrices on the boundaries of the lenses of \eqref{RHPZ}  tend exponentially to zero in any $L^p$ norm ($1\leq p<\infty$) on the lenses
while away from the branchpoints $a_j$, $j=1,\cdots,2g+2$  as long as $\Re \k\to +\infty$ and $\Im \k$ remains bounded. 
Thus, neglecting the jumps on the lenses away from the branchpoints leads to exponentially small (in the limit $\k\ra +\infty$) errors. 
The error caused by the jumps on the parts of the lenses that are close to branchpoints require introduction
of local parametrices. This will be considered in Subsection \ref{subsec-localpar}. 
If we completely neglect the jumps on the lenses, we arrive at the following ``model problem'' which captures 
the essence of the approximation.
\begin{problem}[Model problem]
\label{modelRHP}
Find a matrix $\Psi=\Psi(z;\k)$, analytic on $\C\setminus[a_1,a_{2g+2}]$ and satisfying the following conditions:
 \begin{equation}\label{jumpPsi}
\begin{split}
& \Psi_+=\Psi_-  (-1)^{s(j)}(i\s_1)\ ,\ s(j) = \delta_{j,1}  + \delta_{j,g+1},
{\rm ~on~ } \g_j, j=1,\dots g+1;\\
&\Psi_+=\Psi_-e^{i(\k\O_j+\d_j)\sigma_3}{\rm ~on~ } c_j, j=0,\dots, g-1;\\
&\Psi(z) = \mathcal O( |z-a_j|^{-\frac 1 4}) \ , \ \ z\to a_j, \ j=1,\dots, 2g+2;\\
&\Psi(z) = \1 + \mathcal O(z^{-1})\ ,\ \ z\to\infty,~~{\rm and}~~\Psi_\pm(z)\in L^2([a_1,a_{2g+2}]).
\end{split}
\end{equation}
Here $\d_{j,k}$ denotes the Kronecker delta 
{(not to be confused with the vector $\vec \d$ and its components 
$\d_j$ introduced in \eqref{dz},  \eqref{delta}). } 
\end{problem}
It is well known that solution to the RHP \ref{modelRHP}, if it exists, is unique. The proof of uniqueness proceeds analogously to the proof of Proposition \ref{prop-RHPG}. For our purposes, the most important information to extract from the Problem \ref{modelRHP} is {\em for what values of $\k$ it is {\bf not} solvable}. This will be accomplished in section \ref{secmodelRHPM}.
 
\bp[Symmetry]
\label{symmetry}
If $\Psi(z;\k)$ satisfies the  RHP \ref{modelRHP} then $\det \Psi\equiv 1$ and $\wt \Psi\equiv \Psi$,
where $\wt \Psi(z;\k)=\overline{\Psi(\bar z; \ov \k)}$.
In particular, for $\k\in \R$,  $\Psi_{j1+}(z;\k)= \ov \Psi_{j1-}(z;\k)$ for any $z\in I = I_i\cup I_e$.
\ep
The proof of these statements follows the same arguments used in the proof of Proposition \ref{prop-RHPG} and Remarks \ref{SchwartzGamma}, \ref{symmetrylambda}.
\subsection{Local Riemann--Hilbert problems}\label{subsec-localpar}
The uniform approximation approach to $Z(z;\k)$ requires that we analyze neighborhoods of the branchpoints 
(where the jump matrices on the lenses do not approach the identity in $L^\infty$ norm as $\Re (\k) \to +\infty$.)
We define the local coordinates $\xi_j(z)$ near each of the $a_j$'s to be 
\bea
\label{defloc}
\sqrt{\xi_j} = \sqrt{\xi_j(z)}  := \k \int_{a_j}^z \omega_1d\z = C_j \sqrt{z-a_j} (1 + \mathcal O(z-a_j))\ ,\\
\ \ j=1,2, 2g+1,2g+2,\cr
\sqrt{\xi_j} = \sqrt{\xi_j(z)}  := -\k \int_{a_j}^z \omega_1 d\z= C_j \sqrt{z-a_j} (1 + \mathcal O(z-a_j))\ ,\\
\ \ j=3,\dots, 2g.\nonumber
\eea
Inspecting the integrand, we see that the constants $C_j$ are given by $\pm \frac{2 \k P_1(a_j)}{\sqrt{\prod_{k\neq j} (a_k-a_j)}}$ and thus they do not vanish by Lemma \ref{lemmazeroes}. 
We choose the determination of $\xi_j$ always in such a way that the main arc originating at $z=a_j$ is mapped to 
the negative real axis by $\xi_j(z)$. Note that 
each $\xi_j$ is analytic in a full neighborhood of $a_j$ and each is a local conformal mapping.
These local coordinates are clearly related to the function $\gg$ in view of \eqref{defg}, and, thus, 
also to the exponents appearing in the jump conditions of  \eqref{jumpY2} near the $a_j$'s (from the $\pm $ sides of the real axis). Specifically, we have 
\bea
\label{glocal}
&& 
\gg(z)  = \frac 12   -2 \int_{a_j}^z \omega_1d\z \ 
\Rightarrow \ \ \k(2\gg(z)-1)  =
   -4\sqrt{\xi_j},
  \ \ \ \  j=1, 2g+2;
   \cr
&& \gg(z)  = \frac {1}2 \pm\frac i{{2}} \Omega_1    - 2 \int_{a_2}^z \omega_1 d\z
\Rightarrow \ \ \k(2\gg(z)-1)  =
 \pm i\k \Omega_1 -4\sqrt{\xi_2},\cr
&& \gg(z)  = {-}\frac {1}2 \pm \frac i {{2}} \Omega_{\lfloor \frac \ell 2\rfloor}   - 2 \int_{a_{\ell}}^z \omega_1d\z
 \Rightarrow \ \  - \k(2\gg(z){+}1)  =
 \mp i\k\Omega_{\lfloor \frac \ell 2\rfloor}  - 4\sqrt{\xi_{\ell}},\qquad  \ \ \ell =3,4,\dots, 2g, \cr
&& \gg(z)  =\frac {1}2 \pm \frac i {{2}} \Omega_0  - 2 \int_{a_{2g+1}}^z \omega_1d \z
 \Rightarrow \ \ \k(2\gg(z)-1)  =
  \pm i\k \Omega_0 -4 \sqrt{\xi_{2g+1}}.\nonumber
\eea
In terms of these newly defined local coordinates $\xi_j$, the jump conditions in \eqref{RHPZ} become as indicated in Table \ref{tablejumps}.
We shall need to construct local {\em exact} solutions of the jump conditions  of the problem for $Z$ (see \eqref{RHPZ})  in the neighborhood of each branchpoint. The prototypical RHP near those branchpoints is summarized here:

\begin{small}
\begin{table}[h]
\begin{center}
\begin{tabular}{c|c|c|c|c|c}
Endpoint & Lenses & Main & Complementary  & Orientation\\
\hline&&&&\\
$a_1$
&$\ds \le[
\begin{matrix}
1 & 0 \\
{\rho}{\rm e}^{-4\sqrt{\xi_1}} & 1
\end{matrix}
\ri]
$
& 
$-i\s_1$ 
&
$\ds 
\1 $
& Out &\\[4ex]
$a_2$ &
$\ds \le[
\begin{matrix}
1 & 0 \\
{\rho}{\rm e}^{-4\sqrt{\xi_2} \pm i\k \Omega_1} & 1
\end{matrix}
\ri]
$
& 
$\ds 
-i\s_1 $ 
&
$\ds 
{\rm e}^{i(\k \Omega_1 + \d_1)\s_3}
$
& In \\[4ex]
$a_\ell $ &
$\ds \le[
\begin{matrix}
1 & \frac 1{\rho}{\rm e}^{-4\sqrt{\xi_\ell } \mp  i\k \Omega_{\lfloor \frac \ell 2\rfloor}} \\
0& 1
\end{matrix}
\ri]
$
& 
$\ds 
i\s_1 $ 
&
$\ds 
{\rm e}^{i(\k \Omega_{\lfloor \frac \ell 2\rfloor} + \d_{\lfloor \frac \ell 2\rfloor})\s_3}
$
& odd $\ell$ Out; even $\ell$ In \\[4ex]
$a_{2g+1}$ &
$\ds \le[
\begin{matrix}
1 & 0 \\
{\rho}\,{\rm e}^{-4\sqrt{\xi_{2g+1}} \pm i\k \Omega_0} & 1
\end{matrix}
\ri]
$
& 
$\ds 
-i\s_1$ 
&
$\ds 
{\rm e}^{i(\k \Omega_0 + \d_0)\s_3}
$
& Out \\[4ex]
$a_{2g+2}$&
$\ds \le[
\begin{matrix}
1 & 0 \\
\rho {\rm e}^{-4\sqrt{\xi_{2g+2}}} & 1
\end{matrix}
\ri]
$
& 
$\ds 
-i\s_1$ 
&
$\ds 
\1 $
& In \\
\end{tabular}\\[14pt]
\resizebox{.71\textwidth}{!}
{\input{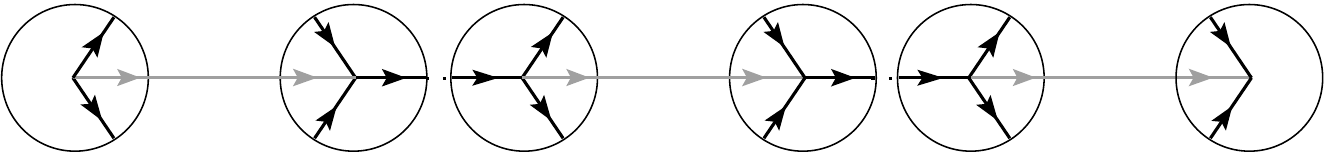_t}}
\end{center}\vspace{10pt} 
\caption{
The jump matrices of $Z$, see \eqref{RHPZ}, near each of the endpoints are given in terms of the local coordinates $\xi_j$, $j=1,\dots,2g+2$. 
Here $\rho(z):= i w(z) {\rm e}^{2 d(z)}$.
The little diagrams show the arcs near the endpoints $a_j$ and indicate the jump on the main arcs (lighter shade).
The orientation ``In/Out" refers to the lens and main arcs, and it means that they are oriented towards or away from $a_j$ (note that the complementary arc has the opposite orientation). The upper/lower choice of signs refers to the jumps on the lenses belonging to the upper or lower half-plane respectively. }
\label{tablejumps}
\end{table}
\end{small}

\begin{problem}[Local Bessel RHP]
\label{BesselRHP}
Let $\vartheta\in (0,\pi)$ be any fixed number. Find a matrix $\mathcal B_\nu (\zeta)$ {($ |\nu|<1$)}   that  is analytic off the rays 
$\R_-,  {\rm e}^{\pm i\vartheta } \R_+$ and satisfies the following conditions (the contours oriented from the origin to infinity):
\bea
\label{BRHP}
&& \mathcal B_{\nu+} (\zeta)=\mathcal B_{\nu-} (\zeta)
\le[
\begin{matrix}
1 &  0 \\
 {\rm e}^{- 4\sqrt{\zeta}  \pm i\pi \nu } & 1
\end{matrix}
\ri],\ \ \z \in {\rm e}^{ \pm i\vartheta }\R_+;\\
&& \mathcal B_{\nu+} (\zeta)=\mathcal B_{\nu-} (\zeta)
\le[\begin{matrix}
0 &1\\
-1 &0
\end{matrix}\ri],\ \  \zeta\in \R_-;
\cr
&&  \mathcal B_\nu(\zeta) = \mathcal O(\zeta^{-\frac {|\nu|}2})  \text{  for  } \nu \neq 0 \text { or }  \mathcal O(\ln \zeta) \text { for  } 
\nu=0 \text { as } \zeta\to 0;
\cr
&& \mathcal B_\nu(\zeta) =  
F(\zeta)
  \le(\1 + \mathcal O(\zeta^{-\frac 1 2 })\ri),\ \ \zeta \to \infty,\cr
 &&   F(\zeta):= (2\pi)^{-\sigma_3/2}
        \zeta^{-\frac{\sigma_3}{4}}\frac{1}{\sqrt 2}
       \le[ \begin{matrix}
            1 & -i \\
            -i & 1
        \end{matrix}\ri].\nonumber 
\eea
\end{problem}
The solution to the RHP \eqref{BesselRHP} was obtained in \cite{VanlessenStrong}.  It is given  
(in a form convenient for our purposes) below.
\begin{equation}
    \label{BRHPsol}
    \mathcal B_\nu(\zeta)=
    \begin{cases}
     \le[   \begin{matrix}
            I_\nu (2\zeta^{\frac{1}{2}}) &
                -\frac{i}{\pi}K_\nu (2\zeta^{\frac{1}{2}}) \\[1ex]
            -2\pi i\zeta^{\frac{1}{2}}I_\nu '(2\zeta^{\frac{1}{2}}) &
                -2\zeta^{\frac{1}{2}}K_\nu '(2\zeta^{\frac{1}{2}})
        \end{matrix} \ri]{\rm e}^{-2\sqrt{\zeta} \s_3}
        , & \arg(\zeta)\in (-\vartheta,\vartheta), \\[5ex]
       \le[ \begin{matrix}
            \frac{1}{2}H_\nu ^{(1)}(2(-\zeta)^{\frac{1}{2}}) &
                -\frac{1}{2}H_\nu ^{(2)}(2(-\zeta)^{\frac{1}{2}})
                \\[1ex]
            -\pi\zeta^{\frac{1}{2}}(H_\nu ^{(1)})'(2(-\zeta)^{\frac{1}{2}}) &
                \pi\zeta^{\frac{1}{2}}(H_\nu ^{(2)})'(2(-\zeta)^{\frac{1}{2}})
        \end{matrix}\ri]{\rm e}^{-2\sqrt{\zeta} \s_3} e^{\frac{1}{2}\nu \pi
        i\sigma_3}, &\arg(\zeta)\in (\vartheta,\pi), \\[5ex]
        \le[\begin{matrix}
            \frac{1}{2}H_\nu ^{(2)}(2(-\zeta)^{\frac{1}{2}}) &
                \frac{1}{2}H_\nu ^{(1)}(2(-\zeta)^{\frac{1}{2}}) \\[1ex]
            \pi\zeta^{\frac{1}{2}}(H_\nu ^{(2)})'(2(-\zeta)^{\frac{1}{2}}) &
                \pi\zeta^{\frac{1}{2}}(H_\nu ^{(1)})'(2(-\zeta)^{\frac{1}{2}})
        \end{matrix}\ri]{\rm e}^{-2\sqrt{\zeta} \s_3}e^{-\frac{1}{2}\nu \pi
        i\sigma_3}, & \arg \zeta\in ( -\pi, -\vartheta),
    \end{cases}
\end{equation}
where $H_\nu^{(1)}$, $H_\nu^{(2)}$ denote  Hankel's functions
(Bessel functions of the third kind) and $I_\nu, K_\nu$ denote the modified Bessel functions (see, for example, \cite{gr}).
\br
The arbitrariness of $\vartheta$ is simply indicative of the fact that the rays can be freely moved within the indicated sector.
For our purposes we can fix any $\vartheta\in (0,\pi)$ by requiring that the boundaries of the lenses are the preimages of straight lines in the respective $\xi_j(z)$ local coordinates, within the disks $\D_j$. For example $\theta = 3\pi/4$ so as to match the shape indicated in Fig. \ref{Lenses}.
\er
\br
Although the shape of the contours supporting the jumps in the inner endpoints in Figure \ref{tablejumps} resemble those of the Airy parametrix \cite{DKMVZ}, the jump matrices are different.
This is also  an expected consequence of the fact that the behavior of the $\gg$-function near the branchpoints is like a square root rather than the power $\frac 32$.
 \er

\br
In our labeling, the constants  $\Omega_0,\d_0$ are the jumps of $\gg(z)$ and $d(z)$, respectively, on the complementary arc $c_0 = [a_{2g}, a_{2g+1}]$. This can be seen from \eqref{geqc}, \eqref{jump-constants-n} for $\gg$, and from \eqref{propertyDelta}, \eqref{delta} -- for $d(z)$. 
\er
\subsection{Final approximation}
\label{finalapprox}
We shall denote the final and uniform approximation to the matrix $Z(z;\k)$ by $\wt Z(z;\k)$.

The accuracy of this approximation to $Z$ (and, thus, ultimately to $\Gamma$) is discussed in 
Section \ref{erroranalysis} after the solvability of the model Problem \ref{modelRHP} has been analyzed in Section \ref{secmodelRHPM}.

Here and henceforth we denote by $\mathbb D_j$  the disks around the branchpoints $a_j$ of the same radius $r$, 
which should be sufficiently small so that the disks do not intersect each other, see Figure \ref{Lenses}, lower
panel. We also use the notation
$\mathbb D_j^\pm = \mathbb D_j \cap \{\pm \Im z>0\}$. Then $\wt Z(z;\k)$ is defined by
\begin{small}
\be\label{approx}
\wt Z(z;\k) := \le\{
\begin{array}{l l} 
\ds \Psi(z)& \!\!\!\! \!\!\!\! \!\!\!\! z \in  \C\setminus\bigcup_{j=1}^{2g+2} \mathbb D_j,\\[2ex]
\ds \Psi(z) \rho^{-\frac {\s_3}2}   {\rm e}^{\pm \frac{i\pi}4\s_3} [F(\xi_1)]^{-1}  
\mathcal B_{\frac 1 2}(\xi_1)  \rho^{\frac{\s_3}2}  {\rm e}^{\mp \frac{i\pi}4 \s_3} & z\in  \mathbb D_1^{\pm},
\\[2ex]
\ds \Psi(z)  \rho^{-\frac{\s_3}2}  {\rm e}^{\mp \frac {i\k} 2 \Omega_1 \s_3} {\s_3}[F(\xi_2) ]^{-1} 
\mathcal B_0(\xi_2) {\s_3} {\rm e}^{\pm \frac {i\k}2  \Omega_1\s_3} \rho^{\frac{\s_3}2} & \text { in } \mathbb D_2^{\pm},\\
\\[2ex]
\ds \Psi(z) \rho^{-\frac{\s_3}2} {\rm e}^{\mp \frac {i\k} 2 \Omega_{k-1} \s_3} {\s_1}[F(\xi_{2k-1})]^{-1} 
\mathcal B_0(\xi_{2k-1}) {\s_1} {\rm e}^{\pm\frac {i\k}2 \Omega_{k-1}\s_3} \rho^{\frac{\s_3}2}  & z\in  \mathbb D_{2k-1}^{\pm},\\
\\[2ex]
\ds \Psi(z)   \rho^{-\frac{\s_3}2} {\rm e}^{\mp \frac {i\k} 2 \Omega_{k} \s_3} {\s_3} {\s_1} [F(\xi_{2k}) ]^{-1}
\mathcal B_0(\xi_{2k}){\s_1} {\s_3} {\rm e}^{\pm \frac {i\k}2 \Omega_k\s_3}\rho^{\frac{\s_3}2} & z\in  \mathbb D_{2k}^{\pm},\\
\\[2ex]
\ds \Psi(z) \rho^{-\frac{\s_3}2}   {\rm e}^{\mp \frac {i\k} 2 \Omega_{0} \s_3}[F(\xi_{2g+1}) ] ^{-1}
\mathcal B_0(\xi_{2g+1})  {\rm e}^{\frac {i\k}2 \Omega_0\s_3} \rho^{\frac{\s_3}2}  & z\in  \mathbb D_{2g+1}^{\pm},\\
\\[2ex]
\ds \Psi(z)   {\rm e}^{\pm \frac{i\pi}4 \s_3}\rho^{-\frac {\s_3}2}  {\s_3} [F(\xi_{2g+2}) ] ^{-1}
\mathcal B_{\frac 1 2}(\xi_{2g+2}) {\s_3} \rho^{\frac{\s_3}2}  {\rm e}^{\mp \frac{i\pi}4 \s_3} & z\in  \mathbb D_{2g+2}^{\pm},\\
\end{array}
\ri.
\ee
\end{small}
where $F(\xi)$ was defined in \eqref{BRHP}.
\br
The function $\rho$, which is analytic in $\C \setminus \R$, was introduced in Proposition \ref{prop-Del}, and $w(z) = \sqrt{(z-a_1)(a_{2g+2}-z)}$ is understood here as analytic on $\C \setminus (-\infty,a_1]\cup [a_{2g+2},\infty)$ and positive on $[a_1,a_{2g+2}]$.
Note that  $\rho^{\frac {\s_3}2} = {\rm e}^{\frac {i\pi}4\s_3} w^{\frac {\s_3}2} {\rm e}^{d \s_3}$ and  $\rho_+^{\frac {\s_3}2}\rho_-^{\frac {\s_3}2} = {\rm e}^{\frac{i\pi} 2\s_3}$ on the main arcs, while $\rho_+^{\frac {\s_3}2}  = \rho_-^{\frac {\s_3}2} {\rm e}^{i\d_j \s_3}$ on the complementary arcs $c_{j}$. 
All of these properties follow from \eqref{propertyDelta}.
\er
\br
In checking the properties at $a_1,a_{2g+2}$ it should be reminded that the function $w(z) = \sqrt{(z-a_1)(a_{2g+2}-z)}$ has the cuts extending on $(-\infty, a_1]\cup [a_{2g+2},\infty)$. 
\er
\subsubsection{How the parametrices are constructed.}
Here we  explain the rationale behind formula \eqref{approx}. The expressions in each $\mathbb D_j$ of formula \eqref{approx} are usually called {\em (local) parametrices} and thus we will conform to the accepted convention.

The main driving logic is that the proposed $\wt Z$ must fulfill:
\begin{itemize}
\item inside each disk $\mathbb D_k$ the jump conditions of $\wt Z(z;\k)$ are exactly the same as those satisfied by $Z(z;\k)$ 
(see Table \ref{tablejumps}) ;
\item The jump conditions satisfied by $\wt Z$ across the boundary of each disk are of the form 
$\wt Z_+(z;\k) = \wt Z_-(z;\k) \le(\1 + \mathcal G(z)\ri)$,  $z\in \pa \mathbb D_j$ where $\mathcal G(z): \pa \mathbb D_j \to SL_2(\C)$ is a matrix  that tends to zero uniformly as $\k\to \infty$ at a certain rate (which will turn out to be $\k^{-1}$). 
\end{itemize}
For the benefit of the reader we show how the jump conditions of $\wt Z$ match those of $Z$ in a ``constructive'' way, rather than 
simply checking them one by one {\em post-facto}.

Consider the case of the endpoint $a_1$.
We start from the exact jumps of $Z$ as in Table \ref{tablejumps} and by multiplication on the right by 
appropriate matrices from the 
second line
of \eqref{approx}, we see how to reduce the jump matrices to the form that matches those  of the Problem \ref{BesselRHP}.  
{\bf(i)} Multiply $Z\mapsto Z\, \rho^{-\frac{ \s_3} 2}$ so that  on the lenses they become $
\le[
\begin{matrix}
 1 & 0 \\
 {\rm e}^{-4 \sqrt{\xi_1}} & 1
 \end{matrix}
\ri]$, and on the main arc it becomes $i\s_2$. However this introduces an additional jump on $(-\infty,a_1]\cap \mathbb D_1$ due 
to the jump of $w = \sqrt{(z-a_1) (a_{2g+2}-z)}$ of the form ${\rm e}^{\frac {i\pi}2 \s_3}$. To remove the latter (undesired) jump we 
{\bf (ii)}  multiply  by ${\rm e}^{\pm \frac {i\pi} 4\s_3}$ in the regions $\mathbb D_1^\pm$, respectively. 
This removes the additional jump on $\xi_1>0$ ($z<a_1$),  but  transforms the jump matrices on the lenses  to $
\le[
\begin{matrix}
 1 & 0 \\
 {\rm e}^{-4 \sqrt{\xi_1} \pm i \pi \frac 1 2} & 1
 \end{matrix}
\ri]$, which now matches precisely those of Problem \ref{BesselRHP} with 
$\nu =\frac 1 2$. 
{The jump matrix $i\s_2$ on the main arc does not undergo any change.}
 Reversing the transformations, we can 
state that 
\be
\mathcal B_{\frac 1   2}  (\xi_1) \,\rho^{\frac {\s_3}2} {\rm e}^{\mp \frac {i\pi}4\s_3}
\label{B12}
\ee
has exactly the same jump conditions as $Z(z,\k)$ (and as $\wt Z(z,\k)$)  in the neighborhood $\mathbb D_1$. 
At the same time we may multiply \eqref{B12} on the left by an arbitrary  invertible matrix-function. We use this to our advantage in such a way that on the boundary $\pa \mathbb D_1$ this analytic prefactor matches  the behavior of \eqref{B12}. 
To this end, consider $\Psi(z) \rho^{-\frac {\s_3}2}   {\rm e}^{\pm \frac{i\pi}4\s_3} [F(\xi_1)]^{-1}  $:
direct calculations show that  
it has no jumps near $a_1$ (note that $F(\xi_1) {\rm e}^{\mp \frac{i\pi}4\s_3}\rho^{\frac {\s_3}2} $ is a local solution of the "model" RHP near $a_1$).  Thus, it has at worst a pole at $z=a_1$. However,
simple power counting shows that it may have at most a square root singularity. Thus,
it  is  analytic at $z=a_1$. 
We have established that 
\be
\wt Z(z;\k)=\Psi(z) \rho^{-\frac {\s_3}2}   {\rm e}^{\pm \frac{i\pi}4\s_3} \framebox{ $[F(\xi_1)]^{-1}\mathcal B_{\frac 1   2}  (\xi_1)$}
\rho^{\frac {\s_3}2} {\rm e}^{\mp \frac {i\pi}4\s_3}
\label{B310}
\ee
in $\mathbb D_1$ has exactly the same jump conditions as $Z(z;\k)$ inside $\mathbb D_1$. Now we examine this formula on the boundary of $\mathbb D_1$; 
here $\sqrt{\xi_1} = \k \int_{a_1}^z \omega_1 d\z$, so that  $|\xi_1|= \mathcal O(\k^2)$.
Thus the $[F(\xi_1)]^{-1}\mathcal B_{\frac 1   2}  (\xi_1)$ term in \eqref{B310} (the framed term), due to \eqref{BRHP}, behaves like 
\be
[F(\xi_1)]^{-1}\mathcal B_{\frac 1   2}  (\xi_1) = \1 + \mathcal O\le(\frac 1{\k}\ri)\ ,
\ee
where the $\mathcal O$ term is uniform for $z\in \pa \mathbb D_1$.
The details of the error analysis are deferred to  section \ref{erroranalysis}.

These steps have to be repeated for each of the branchpoints $a_j$.
The overall result is summarized in the following proposition.
\bp
\label{propglue}
{\bf (1)} The matrix  $\wt Z(z;\k)$ defined in \eqref{approx}  has exactly the same jumps as $Z(z;\k)$ within $\mathbb D_j$ and
on all main arcs $\g_j$.\\
{\bf (2)}  There is  a constant $\mathcal N$, independent of $\k$, such that 
\be
\wt Z_+(z;\k) = \wt Z_{-}(z;\k) (\1 + \mathcal O(\k^{-1}))\ ,\ \ \ z\in \pa \mathbb D_j,~~~j=1,\cdots,2g+2.\label{gluing}
\ee
 with $|\mathcal O(\k^{-1})|\leq \mathcal N/|\k|$ for large $|\k|$.\\
{\bf (3)} The estimate above is valid {\bf uniformly} as ${\Re \k \to +\infty}$  and $\Im \k$ remains bounded.
\ep
\begin{proof}
Only the points (2),(3) have to be proven. The jump of $\wt Z$ is expressed in \eqref{B310}, since $\wt Z_- = \Psi$ and 
thus  ($\nu_1 =\nu_{2g+2} =  \frac 1 2, \nu_j =0$ otherwise)
\be
\wt Z_+(z;\k) (\wt Z_{-}(z;\k))^{-1} = \rho^{-\frac {\s_3}2}   {\rm e}^{\pm \frac{i\pi}4\s_3} [ F(\xi_j)]^{-1}\mathcal B_{\nu_j}  (\xi_j)
\rho^{\frac {\s_3}2} {\rm e}^{\mp \frac {i\pi}4\s_3}.\label{57}
\ee
In the right hand side the only dependence on $\k $ is in $\xi_j$, and on the boundary of the disks $\mathbb D_j$ we have $|\xi_j(z)|> C  |\k|^2 $. 
We then use property \eqref{BRHP} of the Bessel parametrix: $ [F(\xi_j)]^{-1}\mathcal B_{\nu_j}  (\xi_j) = \1 + \mathcal O(\xi_j^{-{\hf}})$. So, on the boundary of each disk $\mathbb D_j$, we have  $ [F(\xi_j)]^{-1}\mathcal B_{\nu_j}  (\xi_j) = \1 + \mathcal O(\k^{-1})$.
The last point {\bf (3)} follows from the fact that the only dependence of \eqref{57} on $\k$ is the  factor $\k$ in the definition of $\xi_j$ (cf. \eqref{defloc}).
\end{proof}

\section{(Non)solvability of the model problem}
\label{secmodelRHPM} 
The solution of the  model RHP \ref{modelRHP} was discussed in the 
literature, see, for example, \cite{DKMVZ, Deift:1997p11, Korotkin05}, where problems of this nature are solved in greater generality. 
Nonetheless, in this section we try to give a relatively  brief but to a large degree self-contained exposition of solution of the RHP \ref{modelRHP}, 
using  only some standard  facts from the geometry of compact Riemann surfaces
(\cite{FarkasKra, Faybook}). Some of this information can be found in Appendix \ref{thetaapp}.  We would like to remind the reader that our interest is not just in {\em solving} the RHP, but rather in knowing when it 
is {\em not solvable}. Reference  \cite{Korotkin05} turns out to be especially useful in this respect.

We recall the definition of the cycles $A,B$ (refer to Figure \ref{homology}) and of the normalized first-kind 
differentials $\omega_j$ \eqref{1stkind}. 
The {\bf normalized matrix of $B$-periods} is then 
\be
\tau = [\tau_{ij}]= \le[
\oint_{B_i}\!\!\! \omega_j d\z
\ri]_{i,j=1,g}.
\label{taumatrix}
\ee
\bth[Riemann \cite{FarkasKra}]
\label{Riemann1}
The matrix $\tau$ is {\bf symmetric} and its imaginary part is strictly positive definite.
\et
In our case it is promptly seen from the definitions \eqref{1stkind}, \eqref{taumatrix} that $\tau$ is purely imaginary.
The Abel map (of the first sheet of the Riemann surface) is defined to be 
\be
\label{Abelmap}
\mathfrak u(z) = \int_{a_1}^z \vec \omega(\z)d\z  ,\ \ \ \ z\in \C \setminus [a_1,\infty).
\ee
\br[Abel map on the Riemann surface]
\label{disclaimer}
Here we have opted for the definition (\ref{Abelmap}) which coincides with the one in the literature only on the first sheet.
On occasions we will need the Abel map extended to a dense simply connected domain of the whole Riemann surface (the canonical dissection). When thinking of a point on the two sheets of the canonical dissection (i.e. a pair of values $p = (z,R(z))$) we shall use the symbol $\mathfrak u(p)$. The effect of the exchange of sheets is the change of sign of $\mathfrak u$. 
Also by the symbol $\u(\infty)$ (without any subscript) we  always denote the Abel map of the point at infinity on the first sheet, and if necessity arises we will use $\u(\infty_{1,2})$ to distinguish between the two points at infinity on different sheets.
\er
The aim of this section is to prove the following theorem:
\bth
\label{hiteig}
The model RHP \ref{modelRHP} is {\bf not solvable} if and only if
\be
 \Theta\le ( W - W_0\ri  )=0,
\ee
where $ \Theta$ is the Riemann Theta-function, see Appendix \ref{thetaapp}, and the vectors $W, W_0$ are given by 
 \be
 \label{wukappa} W = W(\k)=\frac {\k}{i\pi} \tau_1 + 2\mathfrak u(\infty)  + \frac {{\bf e}_1}{2},\ \ 
 W_0  = \frac {\tau_1}2 - \frac { {\bf e}_1 + {\bf e}_g}2
  \ee
with $\tau_1$ denoting the first column of the matrix $\tau$,
and  ${\bf e}_j$, $j=1,\dots,g$, being the  vectors of the standard basis in $\C^g$.
\et
Although this theorem can be derived from the results  of \cite{Korotkin05}, we decided to  give an independent  
proof below for the benefit of the  reader  and also because some notation will be needed in the sequel.
\subsection{Proof of Theorem \ref{hiteig}}
The solution shall be written explicitly in terms of Theta-functions. 
Then, {\em by inspection}, we shall see that under special choices of $W$ there is a row-vector solution 
 that tends to zero at infinity.

It is useful to keep in mind that our definition \eqref{Abelmap} of $\mathfrak u(z)$  leads to the following proposition,
which was obtained by direct calculations.
\bp[Properties of the Abel map $\mathfrak u$]\label{prop-jumpu}
The vector-valued function $\mathfrak u: \C\setminus [a_1,\infty)$ defined in \eqref{Abelmap} satisfies 
\bea
\label{jumpu}
&& \mathfrak u(z) _+ =- \mathfrak u(z)_-  + \le\{
\begin{array}{cl}
0  & z\in [a_1,a_2],\\
\sum_{\ell=1}^k{\bf e}_\ell  & z\in [a_{2k+1},a_{2k{+2}}]\ , { 1\leq} k\leq g-1,\\
{\bf e}_g & z\in [a_{2g+1},a_{2g+2}],
\end{array}\ri.\\
&& \mathfrak u(z) _+ =\mathfrak u(z)_-  + \le\{
\begin{array}{cl}
-\tau_k - \tau_g & z\in [a_{2k},a_{2k+1}]\ ,\ \ {1\leq} k\leq g-1,\\
- \tau_g & z\in [a_{2g},a_{2g+1}],\\
0  & z\in (-\infty, a_1]\cup[a_{2g+2}, \infty),
\end{array}
\ri.
\eea
where $\tau_j$ denotes the $j$-th 
column of the matrix $\tau$ in \eqref{taumatrix}.
In particular we have (all integrals taken on the $+$ side):
\bea
\begin{array}{ccc}
\ds \mathfrak u(a_1)=0, &\ds\mathfrak u(a_{2k+1})  = \sum_{\ell=1}^k \frac {{\bf e}_\ell}2  - \frac {\tau_k  + \tau_g}2, 
& \ds \mathfrak u(a_{2g+1}) =\frac {{\bf e}_g- \tau_g}2,\\
\ds\mathfrak u(a_{2k}) =\sum_{\ell=1}^{k-1} \frac {{\bf e}_\ell}2  - \frac {\tau_k + \tau_g}2, 
&\ds\mathfrak u(a_{2g})  =\sum_{\ell=1}^{g-1}\frac{  {\bf e}_\ell}2 - \frac{ \tau_g}{2},  &
\ds  \mathfrak u(a_{2g+2})  =\frac{ {\bf e}_g}{2}. 
\end{array}
\label{abela_j}
\eea
\ep

Recall that $\mathcal R$ is hyperelliptic and, according to  Proposition \ref{prop-K} (see \cite{FarkasKra}, p. 325, formula (1.2.1)), the vector $\mathcal K$ of Riemann constants is given by
$\mathcal K=\sum_{j=1}^{g} \mathfrak u(a_{2j+1})$ 
{(modulo periods)}.
 Thus, according to \eqref{abela_j}, 
\be\label{2K}
2\mathcal K   =  -   g  \tau_g  - \sum_{k=1}^{g-1} \tau_{k}  + {\bf e}_g + \sum_{\ell=1}^{g-1} (g-\ell) {\bf e}_{\ell}.
  \ee

Let us denote by $\Lambda_\tau = \Z^g + \tau \Z^g\subset \C^g$ the {\em lattice of periods}. 
The {\bf Jacobian} is the quotient $\mathbb J_\tau = \C^g\mod \Lambda_\tau$ and it is a compact torus of real 
dimension $2g$ on account of Theorem \ref{Riemann1}.

Let now  $J = \{ 1,5, 7,9,11, \dots, 2g-1\}$ and $J' = \{1,2, 3, \dots 2g+2\}\setminus J$ so that $|J| = g-1$ and $|J|' =g+3$. 
The image of the degree $g-1$ divisor $ \mathcal D_J = a_1 + a_5+ a_7+ \dots + a_{2g-1}$ in the quotient $\mathbb J_\tau$ is
\be
\label{remdivg-1}
 \mathfrak u (\mathcal D_J):=\sum_{j\in J} \mathfrak u(a_j)   =\mathcal K -\mathfrak u(a_3) - 
\mathfrak u(a_{2g+1})= \mathcal K + \frac {\tau_1}2 - \frac {{\bf e}_1 + {\bf e}_g}2.
\ee

\bl
\label{lemma1}
Define the  functions  
\be
F_1^{(\pm)}(z):= \Theta\le(\mathfrak u(z) \mp \mathfrak u(\infty) - W_0\ri),\qquad
F_2^{(\pm)}(z):= \Theta\le(-\mathfrak u(z) \mp  \mathfrak u(\infty) - W_0\ri)
\ee
on $\C \setminus [a_1, a_{2g+2}]$, where $W_0$ is given in \eqref{wukappa}. 
Then
\begin{itemize}
 \item  the vector $W_0$ in $\mathbb J_\tau$ equals
\be\label{W0long}
W_0= \sum_{j\in J} \mathfrak u(a_j) + \mathcal K.
\ee
\item The functions $F_1^{{(+)}}(z), F_2^{{(-)}}(z)$ vanish at $z=\infty$;
\item The functions $F_1^{{(-)}}(z), F_2^{{(+)}}(z)$  do not vanish at $z=\infty$;
\item For each $j\in J$  they both vanish at $z=a_j$ like $\sqrt{z-a_j}$.
\end{itemize}
\el
\begin{proof}
{Formula \eqref{W0long} follows by noticing that $\u(a_3) + \u(a_{2g+1}) = W_0$ (in $\mathbb J_\tau$) and using \eqref{remdivg-1}.}
We consider only the case $F_k^{(+)}$, $k=1,2$ (the case of $F_k^{(-)}$ is completely similar). In the following discussion we  omit the 
super-index $^{(+)}$ for brevity.
First, note that $F_2$ is the analytic continuation of $F_1$ across the cuts   because of the jumps \eqref{jumpu} and
the periodicity properties of the Theta functions \eqref{thetaperiods}.
Next we use the general Theorem \ref{generalTheta} which asserts that there are exactly $g$ zeroes of this extension on the 
whole Riemann surface $\Rscr$.
According to the  definition of $W_0$, we have  
\be
\mathfrak u(\infty)+ W_0 = \mathfrak u(\infty) +\sum_{j\in J} \mathfrak u(a_j) + \mathcal K. \label{613}
\ee
Note that the divisor consisting of the points  $\infty_{1,2}, a_j, j\in J$ ($\infty_{1,2}$ refer to the point at infinity on one or the other sheet, respectively)  used in \eqref{613} on the hyperelliptic Riemann surface  is non-special as recalled immediately after Definition \ref{defspecial}, and hence the $\Theta$ functions in the expressions for $F$ are not identically zero.
Then, by Theorem \ref{generalThetadiv}, the $g$ points $a_j,~j\in J$ and $\infty_1$ 
(the infinity on the main sheet of $\Rscr$)
are the only zeroes of the extension of $F_1$. 
Whence the proof of the second and the fourth points. The third point is also proved because all the zeroes that the extension of $F_1$ on the second sheet can possibly have, have already been accounted for.
Finally, the reason why the vanishing at $z=a_j, \ j\in J$, is square-root like is due to the fact that the local coordinate in the Riemann surface near the branch point is $\sqrt{z-a_j}$. 
\end{proof}
The functions $F_{1,2}^{(\pm)}(z)$ satisfy the jump conditions
\begin{small}
\bea
\label{Fjumps}
&& F_1^{(\pm)}(z)_+ = F_2^{(\pm)}(z)_-,\quad  z\in [a_{2k-1}, a_{2k}], \ k=1,\dots g+1;
\cr
&& F_1^{(\pm)}(z)_+  = {\rm e}^{2i\pi
V_1^{(\pm)} \cdot({\bf e}_{k} + {\bf e}_g) - i\pi({\bf e}_{k} + {\bf e}_g)\cdot \tau \cdot ({\bf e}_{k} + {\bf e}_g)}  F_1^{(\pm)}(z)_-,\quad
 z\in [a_{2k}, a_{2k+1}]\ ,\ \ k =1,\dots g-1;
\cr
&& F_1^{(\pm)}(z)_+  = {\rm e}^{2i\pi V_1^{(\pm)}\cdot {\bf e}_g   -i\pi  {\bf e}_g\tau {\bf e}_g} F_1^{(\pm)}(z)_-,\quad z\in [a_{2g}, a_{2g+1}];
\cr
&& F_2^{(\pm)}(z)_+ = F_2^{(\pm)}(z)_-,\quad z\in  \bigcup[a_{2k-1},a_{2k}];
\cr
&& F_2^{(\pm)}(z)_+  = {\rm e}^{-2i\pi V_2^{(\pm)} \cdot({\bf e}_{k} + {\bf e}_g) - i\pi({\bf e}_{k} + {\bf e}_g)\cdot \tau \cdot ({\bf e}_{k} + {\bf e}_g)}  F_2^{(\pm)}(z)_-,\quad
 z\in [a_{2k}, a_{2k+1}]\ ,\ \ k =1,\dots g-1,
\cr
&& F_2^{(\pm)}(z)_+  = {\rm e}^{-2i\pi V_2^{(\pm)}\cdot {\bf e}_g   -i\pi  {\bf e}_g\tau {\bf e}_g} F_2^{(\pm)}(z)_-,\quad  
z\in [a_{2g}, a_{2g+1}],\nonumber
\eea
\end{small}
where 
\be
V_1^{(\pm)}(z):= \mathfrak u(z) \mp  \mathfrak u(\infty) -  W_0\ ,\ \ \ 
V_2^{(\pm)} (z):= -\mathfrak u(z) \mp \mathfrak u(\infty) - W_0.
\ee
The jumps  \eqref{Fjumps} follow from Propositions 
\ref{prop-jumpu} and \ref{thetaproperties}.

Consider the function
\bea
&h(z):=  \sqrt[4]{\frac {\prod_{j\in J} (z-a_j)}{\prod_{\ell\in J'} (z-a_\ell)}}, \ \ \ z\in \C \setminus [a_1,a_{2g+2}],
\label{spinorh}
\eea
defined so that it is analytic in $\C\setminus [a_1,a_{2g+2}]$ and at infinity behaves like $\frac 1 z$.
Note that the points of the divisor $\mathcal D_J$ have been chosen to coincide with the finite zeroes of $h(z)$. 
Direct calculations show that:
\bea
&\&h_+ =i  h_-\ ,  \ z\in I_i =\bigcup_{k=1}^{g-1}  [a_{2k+1},a_{2k+2}],\cr
&\& h_+ =-h_- \ , \ z\in [a_{2k}, a_{2k+1}]\ , k=2\dots, g,\cr
&\&h_+ =-i  h_-\ ,\  \  z\in I_e = [a_1,a_2]\cup [a_{2g+1},a_{2g+2}],\ \cr
&\& h_+ = h_-  \ ,\ \ \ z\in (\infty, a_1]\cup [a_2,a_3]\cup[a_{2g+2},\infty).\label{jumph}
\eea
The main solution of the problem is provided by the following theorem.
\bth
\label{theoremM}
The  matrix 
\bea
\mathcal M(z):= 
\le[\begin{array}{cc}
\ds  \frac {\Theta( \mathfrak u(z)- \mathfrak u(\infty)  - W_0 + W) h(z)}{\Theta(\mathfrak u(z) - \mathfrak u(\infty) -W_0)} & 
 \ds \frac {\Theta( -\mathfrak u(z)- \mathfrak u(\infty)  -W_0+ W) h(z)}{\Theta(-\mathfrak u(z) - \mathfrak u(\infty)  - W_0)}\\[15pt]
\ds  \frac {\Theta( \mathfrak u(z)+ \mathfrak u(\infty)  -W_0 + W) h(z)}{\Theta(\mathfrak u(z) + \mathfrak u(\infty)  - W_0)} & 
 \ds \frac {\Theta( -\mathfrak u(z) + \mathfrak u(\infty)  - W_0+ W) h(z)}{\Theta(-\mathfrak u(z) +\mathfrak u(\infty) -   W_0)}
 \end{array}\ri],
\eea
where $W\in\C^g$ is an arbitrary vector and
$ z\in \C \setminus [a_1,a_{2g+2}]$, has the following properties:
\begin{itemize}
 \item [{\bf (1)}]  $\mathcal M(z)$ satisfies the jump conditions
\bea
&&\mathcal M(z) _+ = \mathcal M(z)_- \le[
\begin{array}{cc}
0 & i\\
i & 0
\end{array}\ri] \ ,\ \ z \in I_i;\ \    \mathcal M(z) _+ = \mathcal M(z)_- \le[
\begin{array}{cc}
0 & -i\\
-i & 0
\end{array}\ri] \ ,\ \ z \in I_e; \cr
&& \mathcal M(z) _+ = \mathcal M(z)_- 
\exp \le[\le(2i\pi W \cdot({\bf e}_\ell   + {\bf e}_g) + i\pi \sum_{k=2}^{g-1} \delta_{\ell k} \ri)\sigma_3 \ri],
 \cr 
 && \qquad\qquad \ \ z \in [a_{2\ell},a_{2\ell +1}]\ ,\ \ \ell \leq g-1;\cr
&& \mathcal M(z) _+ = \mathcal M(z)_- 
\exp \le[\le(2i\pi W \cdot {\bf e}_g + i\pi\ri)\sigma_3 \ri];\cr
&& \qquad\qquad
 \ \ \ z \in [a_{2g},a_{2g+1}];\label{jumpsM}\nonumber 
\eea
\item [{\bf (2)}] near any branchpoint $z=a_j$ each entry of $\mathcal M(z)$ is bounded by $|z-a_j|^{-\frac 1 4}$;
\item [{\bf (3)}] {$\mathcal M(z)$ has the behavior}
\be
\mathcal M(z) = \Theta\le( W -W_0\ri)
{C_0^{-1}} \s_3
 +  \mathcal O(z^{-1}) ~~~~{\rm as} ~~~z\ra\infty,
\label{diagonal}
\ee
where $C_0\neq 0 $ and 
\be
\label{defC_0}
C_0   = 
[\mathbb A^{-1} \nabla \Theta(W_0)]_g
\ee
with $[N]_g$ denoting the $g$-th component of the vector $N$.
\end{itemize}
\et
\begin{proof}
{\bf (1)} The proof of \eqref{jumpsM} follows from straightforward application of the periodicity properties of $\Theta$, see
Proposition \ref{thetaproperties} and the jump conditions \eqref{jumpu} and \eqref{jumph}.\\
{\bf (2)}
The function $h(z)$ from \eqref{spinorh} behaves like $(z-a_k)^{-\frac 1 4}$ at all branchpoints that are not in $J$. 
 On the other hand, in each entry the denominators vanish at $z=a_j$, $j\in J$, like $\sqrt{z-a_j}$ by Lemma \ref{lemma1}, 
while the function $h(z)$  vanishes like $(z-a_j)^\frac 14$, $j\in J$.
 Thus each entry has the behavior  $(z-a_j)^{-\frac 1 4}$ at {\em all} the branchpoints.\\
 {\bf (3)} The boundedness follows because the denominators of the $(1,1)$ and $(2,2)$ entries vanish like $\frac 1 z$, but so does $h(z)$ at infinity. The off-diagonal entries $(1,2), (2,1)$ instead tend to zero because the denominators do not vanish (Lemma \ref{lemma1}), while $h(z)$ still does. 
 This proves that the leading order term of $\mathcal M$ at infinity is a diagonal matrix.
To find this matrix, we calculate
 \be
 \lim_{z\to\infty} \frac{\ds \Theta\le(\int_\infty^z\hspace{-5pt} \vec \omega\, d\z  -W_0\ri)} {h(z)}=  
\lim_{z\to\infty} z \Theta\le(\int_\infty^z \!\!\!\vec \omega \, d\z  -W_0\ri).
 \ee
 The computation of this last limit is done using l'Hopital's rule and taking into account \eqref{1stkind}
and the parity of $\Theta$:
 \begin{equation}
 \label{4123}
\begin{split}
\lim_{z\to\infty} - z^2 \frac {d}{dz }  \Theta\le(\int_\infty^z\!\!\!\! \vec \omega \,d\z -W_0\ri) &= \lim_{z\to\infty} -z^2\vec \omega^t(z) \nabla \Theta\le(\int_\infty^z \!\!\!\! \vec \omega \,d\z -W_0\ri)\\
&=  -[\mathbb A^{-1} \nabla \Theta(-W_0)]_g=
[\mathbb A^{-1} \nabla \Theta(W_0)]_g.
\end{split}
\end{equation}
Repeating the computation \eqref{4123} for the entry $(2,2)$ of \eqref{diagonal} gives $-[\mathbb A^{-1} \nabla \Theta(W_0)]_g$.
The fact that $C_0\neq 0$ is a consequence of Theorem \ref{generalTheta}. 
Indeed  $\Theta\le(\mathfrak u(z)  -W_0\ri)$ has a {\em simple} zero at $z=\infty$ (i.e. vanishes linearly in $1/z$) because the other 
$g-1$ zeroes are at  $z=a_j,~j\in J$,  as stated in Lemma \ref{lemma1}. 
\end{proof}
We can restate  Theorem \ref{theoremM} as follows.
\bth
\label{theoremPsi}
Let 
\bea 
\Psi(z;W):= 
C_0
\le[\begin{array}{cc}
\ds  \frac { \Theta( \mathfrak u(z)- \mathfrak u(\infty)  - W_0 + W) h(z)}
{\Theta(W-W_0) \Theta(\mathfrak u(z) - \mathfrak u(\infty) - 
W_0)} & 
 \ds \frac { \Theta( -\mathfrak u(z)- \mathfrak u(\infty)  - W_0+ W) h(z)}
 {\Theta(W-W_0)\Theta(-\mathfrak u(z) - \mathfrak u(\infty)  -
W_0)}\cr
\ds  \frac {-\Theta( \mathfrak u(z)+ \mathfrak u(\infty)  -W_0 + W) h(z)}
{\Theta(W-W_0)\Theta(\mathfrak u(z) + \mathfrak u(\infty)  -
W_0)} & 
 \ds \frac {-\Theta( -\mathfrak u(z) + \mathfrak u(\infty)  -W_0+ W) h(z)}
 {\Theta(W-W_0)\Theta(-\mathfrak u(z) +\mathfrak u(\infty) -  W_0)}
 \end{array}\ri],
 \eea
be a matrix function 
in $ \C \setminus [a_1,a_{2g+2}]$, where $W_0$ is given by  \eqref{wukappa}  and
$W\in \C^g$ is an arbitrary vector such that  $\Theta(W-W_0)\neq 0$. Then: \\
{\bf (1)} this matrix has  the same jumps as in  \eqref{jumpsM} and $\det \Psi\equiv 1$.\\
{\bf (2)} $\Psi(z)=O((z-a_j)^{-\frac 1 4})$ near each $a_j$, $j=1,\cdots,2g+2$;\\
{\bf (3)} at infinity the matrix tends to $\1$; \\
{\bf (4)} The constant $C_0\neq 0$  and is independent of $W$;\\
{\bf (5)} The matrix $\Psi(z;W)$ is {\em invariant} under integer shifts of the vector $W$,  
 $ W\mapsto W + \Z^g$ (i.e. it is periodic in each component of $W$ along the real direction.).
\et
Only the last item needs additional verification, but this follows from the fact that all theta functions have said periodicity.

For the rest of the paper, we will use notation $\Psi(z;\k)=\Psi(z;W(\k))$, where
$\Psi(z;W)$ is defined in Theorem \ref{theoremPsi}.

\bc
 \label{corHW}
{The RHP}
with jumps as in \eqref{jumpsM}, the  branchpoint behavior 
$O((z-a_j)^{-\frac 14})$, $j=1,\cdots,2g+2,$ and bounded behavior at infinity
admits a  nontrivial solution vanishing at infinity if and only if  the vector $W\in \C^g$ is such that
 \be\label{no-model-sol}
 \Theta\le( W -W_0 \ri)=0.
 \ee 
 \ec
\begin{proof} The sufficiency is obvious from \eqref{diagonal}. Suppose now $\Theta\le( W -W_0 \ri)\neq 0$. Then,
according to \eqref{diagonal},  $\lim_{z\ra\infty}  \Psi (z)=\1$.
But the solution of this 
normalized RHP is unique by the same arguments that were used in the proof of Proposition \ref{prop-RHPG}. The existence of a solution that vanishes at infinity 
would violate this uniqueness. \end{proof}
\begin{proof}[{Proof of Theorem \ref{hiteig}.}]
 To complete the proof of the theorem, we must find the vector $W$ so that the jump matrices in \eqref{jumpsM}  are the same as those of $\Psi$ in \eqref{jumpPsi}. 
Comparing them, we see that the vector $W$ must satisfy 
\be
2\pi L
\cdot W+\pi \sum_{\ell=2}^{g}{\bf e}_\ell  = \k\vec \Omega + \vec \delta, 
\ee
where $L$, $\vec \delta, \vec \O$ are given by \eqref{T-n}, \eqref{delta} and \eqref{jump-constants-n}, respectively.
Then,  according to \eqref{T-n}, \eqref{jump-constants-n} and \eqref{delta},
\be\label{Om-del}
\vec \Omega = -2i  L^{-1} \tau_1\ ,\ \ \ 
\vec \d = 2\pi L^{-1}  \le(2\mathfrak u(\infty) - \mathfrak u(a_{2g+2}) \ri).
\ee

Thus, in view of \eqref{abela_j},
\be\label{W(kap)}
W = \frac {\k}{i\pi} \tau_1 + 2\mathfrak u(\infty) - \mathfrak u(a_{2g+2})+ \frac 1 2 {\bf e}_1  - \frac 1 2 {\bf e}_g= 
\frac {\k}{i\pi} \tau_1 + 2\mathfrak u(\infty)  + \frac {{\bf e}_1}{2} - {\bf e}_g. 
\ee
The proof of Theorem \ref{hiteig} now follows from Theorem \ref{theoremM} and  Corollary \ref{corHW}.  Note that since $\Theta$ is periodic in $\Z^g$, the last term ${\bf e}_g$ is irrelevant.
\end{proof}
\bd\label{def-eigen}
The values $\k^{exact}_n=-\ln \l_n$, where $n\in N$ and $\l_n>0$,  for which the RHP \eqref{RHPGamma} for $\G(z;\l_n)$ does not have a  solution,
we will call (with a mild abuse of terminology) {\it exact eigenvalues} of the RHP  \eqref{RHPGamma} or simply  exact eigenvalues. The 
values $\k_n$, $n\in N$,  for which the model RHP \eqref{modelRHP}  does not have a  solution,
will be called approximate eigenvalues.
\ed
According to Theorem \ref{theo-Gam-K},   $\l_n ={\rm e}^{-\k_n^{exact}}$   are positive eigenvalues of the compact integral
operator $\wh K$, so that $\k=+\infty$ is the 
 only possible point of accumulation of the  exact eigenvalues $\k^{exact}_n$. 
Since the RHP \ref{modelRHP} ``approximates'' the RHP \eqref{RHPGamma}, one can expect that  the
approximate
eigenvalues $\k_n$ will approximate the exact eigenvalues $\k^{exact}_n$ as $n\ra\infty$.
This question will be explored in Section \ref{erroranalysis} below.
\br
\label{enumeration}
So far we have not mentioned any particular way of enumerating the 
approximate eigenvalues $\k_n$. Because of the expected approximation of exact eigenvalues,
we shall assume that this enumeration is chosen in such a way that $\k_n$ is ``close" to 
$\k^{exact}_n$ for  sufficiently large $n\in\N$.
\er
\section{Error estimates}
\label{erroranalysis}
In order to estimate the  difference between the exact solution  $Z(z;\k)$ of the RHP \ref{ZetaRHP}
and its approximation $\wt Z(z;\k)$, we introduce   the error matrix 
\be
\mathcal E(z;\k):= Z(z;\k) \wt Z^{-1}(z;\k)\ .
\ee
We note that the jumps of $\wt Z$ are the same as those of $Z(z;\k)$, see \eqref{RHPZ}, on the main arcs  and,
within each disk $\mathbb D_j$,
on the  complementary
arcs. Thus the only jump discontinuities of $\mathcal E$ are on the boundaries $\pa \mathbb D_j$ and 
across the boundaries of the lenses (on the lenses for briefness) outside of these disks, see Figure \ref{Lenses}, bottom panel. According to \eqref{RHPZ} and \eqref{approx}, the jump matrices on the lenses are of the form 
\be
\mathcal E^{-1}_- (z;\k)\mathcal E_+ (z;\k) = \Psi(z;\k) \le (\1 + 
\s_\pm {\rm e}^{\mp\k(2g\pm 1)} (\rho)^{\mp 1}
\ri) \Psi^{-1}(z;\k),\label{lens}
\ee
where the two signs refer to the lenses  $\pa\mathcal L^{(\pm)}_i$ and  $\pa\mathcal L^{(\pm)}_e$ around the intervals $I_i$ and $I_e$, 
respectively (see \eqref{RHPZ}).
We can always assume that  $\pa\mathcal L^{(\pm)}_i\subset N_i$ and   $\pa\mathcal L^{(\pm)}_e\subset N_e$,
where the regions $N_i,N_e$ were defined in Proposition \ref{propositiongg}, part (3). Thus, according to 
the sign conditions \eqref{signdistro}, the factor ${\rm e}^{\mp\k(2g\pm 1)}$ in \eqref{lens} is  
exponentially small 
 as $\Re \k\to +\infty$ and $\Im \k$ is bounded (say in $[-\pi/2,\pi/2]$). This exponential decay 
is uniform on the lenses (outside the corresponding disks $\mathbb D_j$).
Since the remaining factors in \eqref{lens} are bounded  on the lenses (outside the disks), 
we see that the jumps of $\mathcal E$ outside the disks tend to $\1$  exponentially fast and uniformly in $z$
as $\Re\k\ra +\infty$.
The jumps of $\mathcal E$ on the boundary of the disks {$\pa \mathbb D_\ell$} , according to \eqref{gluing}, are 
\bea
\mathcal E_+ (z;\k) && = \mathcal E_-(z;\k) \wt Z_-(z;\k)\wt Z^{-1}_+(z;\k)\cr
&\&=
 \mathcal E_-(z;\k)\Psi(z;\k) \rho^{-\frac {\s_3}2}   {\rm e}^{\pm \frac{i\pi}4\s_3}  F^{-1}(\xi_j)\mathcal B_{\nu_j}  (\xi_j)
\rho ^{\frac {\s_3}2} {\rm e}^{\mp \frac {i\pi}4\s_3}\Psi^{-1}(z;\k)  \cr
 &\&  = \mathcal E_-(z;\k)\Psi \rho ^{-\frac {\s_3}2}   (\1 + \mathcal O(\k^{-1}))
\rho ^{\frac {\s_3}2} \Psi^{-1}.
  \label{disk}
\eea
Consider now the expression for $\Psi(z;W)$ in Theorem \ref{theoremPsi}: the reader can verify that if $\|\Im W\|$ is bounded, 
then the supremum of each entry as $z$ ranges on the boundaries $\pa \mathbb D_\ell$ is bounded as follows:
\be
\max_{\ell\leq 2g+2}\sup_{z\in \mathbb D_\ell} |\Psi_{ij}| \leq \frac{N}{|\Theta(W-W_0)|}\ .
\ee
with $N>0$ some constant. Then the jumps on $\pa \mathbb D_\ell$ of $\mathcal E$ in \eqref{disk} are uniformly close to 
the identity jump to within $\mathcal O(\k^{-1})/|\Theta(W(\k)-W_0)|^2$, that is 
\be
\label{65}
\mathcal E_+ (z;\k) =\mathcal E_- (z;\k)  \le(\1 +\frac{ \mathcal O( \k^{-1})}{|\Theta(W(\k)-W_0)|^2}\ri),
\ee
where $W=W(\k)$ is defined by \eqref{W(kap)}.
We have already established that if $\Re \k$  is sufficiently large (and $\Im \k$ is bounded by some constant) then the jumps 
on the lenses outside of the disks are $\mathcal O({\rm e}^{-c |\k|})$ small, for some $c>0$.

Let us consider a matrix RHP for some $G(z)$ with a jump matrix $V(z)$ on a contour $\Sigma$ and normalized by $\1$ at $z=\infty$.
Moreover, let the limiting matrices $G_\pm(z)\in L^2(\Sigma)$.
It is well known (under the general title of ``small norm theorem'') that if the $L^2$ and $L^\infty$ norms of $V(z)-\1$
are sufficiently small then the RHP is solvable in terms of a convergent Neumann series and the solution  $G(z)$ is also ``close'' to $\1$ ,  see for example Ch. 7 in \cite{Deift}.
In the  case of the error matrix $\mathcal E$, ~ $ V(z)-\1$ becomes small as $\Re \k\to +\infty$ in any $L^p$ norm.
The largest contribution  to  $ V(z)-\1$ comes from the boundaries of $\mathbb D_\ell$, and its estimate can be seen from \eqref{65}. 
Thus, $\mathcal E(z;\k)$ exists and is close to $\1$ if $\k$ is sufficiently large.
In turn, the solvability of the RHP for $\mathcal E$ implies the solvability of the RHP \ref{RHPGamma} by reverting the chain of exact transformations 
\be
\Gamma \mathop{\longrightarrow}^{\eqref{y-def}} Y \mathop{\longrightarrow}^{\eqref{422}} Z \mathop{=}^{\eqref{approx}} \mathcal E \wt Z.
\ee
Since the solvability of RHP \eqref{RHPGamma} implies the absence of (exact) eigenvalues,  
there are no exact eigenvalues as long as the error term in \eqref{65} is smaller than a suitable constant, namely, in the region
\be
\Re \k > R_0; \ \  \le|\Theta(W(\k)-W_0)\ri| >\frac C{\sqrt{|\k|}}
\label{exclusion}
\ee
for suitable constants $R_0, C>0$.

Recall that the positive approximate eigenvalues $\k_n$ are defined by the equation 
$ \Theta(W(\k)-W_0)=0$. We shall show in Lemma \ref{lemmatransversal} that  $ \Theta(W(\k)-W_0)$   has only simple zeroes $\k=\k_n$, and $ \frac {d}{d \k} \Theta(W(\k)-W_0)\big|_{\k=\k_n}$ is  bounded away from zero uniformly in $n$. Thus it follows from \eqref{exclusion} that the exact eigenvalues $\k_n^{exact}$
can only be found in small $\mathcal O(\k^{-\frac 1 2})$-size neighborhoods of the approximate eigenvalues. 
In the following Subsection \ref{sect-multi}
we shall see  that (asymptotically) near each approximate eigenvalue $\k_n$ there is precisely one exact eigenvalue of
multiplicity one.
\subsection{The location of the eigenvalues}\label{sect-multi}
The core of this section is the proof of the following theorem.
\bth
\label{finalth} 
If $n\in\N$ is sufficiently large, then there is exactly one exact eigenvalue $\k_n^{exact}$  within a distance
\be
 |\k_n^{exact}-\k_n| =\mathcal O ({\k_n}^{-\frac 1 2})
\ee
of each approximate eigenvalue 
$\k_n$.
\et 
\begin{proof}
The bound on the distance has already been argued after \eqref{exclusion}, but the actual existence of an exact eigenvalue within a neighborhood of $\k_n$ has not yet been established.
We shall show that on a small disk around each $\k_n$ (for $n$ sufficiently large) there is exactly one exact eigenvalue $\k_n^{exact}$. We start our analysis with Proposition \ref{logdet}. 
Since the integral in Proposition \ref{logdet} is on the cycle $B_1$ (denoted by $\hat I_i$ in \eqref{dlnt}) that 
avoids the branchpoints, we 
have 
\be\label{Gam-on-B1}
\Gamma(z;{\rm e}^{-\k}) =  {\rm e}^{(d_{\infty}+ \k \gg_{\infty})\s_3}\mathcal E(z;\k) \Psi(z;\k) {\rm e}^{-(\k \gg(z)+ d(z)) \s_3},
\ee
where $z\in B_1$.
Note that  $\lambda = {\rm e}^{-\k}$ and, thus, $-\lambda \pa_\lambda = \pa_\k$.
Inserting \eqref{Gam-on-B1} into Proposition \ref{logdet} and taking into account \eqref{defg}, we obtain
\bea
\pa_\k \ln \det \le(\Id - \frac 1 \lambda K\ri)&&=
-\l \pa_\l \ln \det \le(\Id - \frac 1 \lambda K\ri)  =    \oint_{B_1}\le(\Gamma_{21}\Gamma_{12}' - \Gamma_{11}\Gamma_{22}'\ri)\frac{ d z}{i\pi} \cr
&\&\hspace{-2cm}=  \oint_{B_1}\le(\Psi_{21}\Psi_{12}' - \Psi_{11}\Psi_{22}'\ri)\frac{ d z}{i\pi} - \oint_{B_1} (\k \gg'(z) +d'(z))\frac{ d z}{i\pi} + \mathcal O(\k^{-1})\cr
 &\& \hspace{-2cm}=  \oint_{B_1}\le(\Psi_{21}\Psi_{12}' - \Psi_{11}\Psi_{22}'\ri)\frac{ d z}{i\pi} + \frac{2 \k}{i\pi}\tau_{11}  - \oint_{B_1} d'(z)\frac{ d z}{i\pi} + \mathcal O(\k^{-1}).
 \label{logdetapp}
\eea
Here we have used the fact that $\mathcal E(z;\k)$ is analytic in $z$ in a neighborhood of $B_1$ and uniformly close to the 
identity matrix $\1$, and thus, by Cauchy's theorem, also its derivative $\mathcal E_z(z;\k)$ is uniformly $\mathcal O(\k^{-1})$ on $B_1$.
This approximation is uniform as long as $\k-\k_n$ remains bounded away from $0$  {\em even if $\k$ is allowed to take complex values} and $\Re \k\to \infty$. 

To compute the number of eigenvalues lying within the disk  $|\k-\k_n|<\epsilon$,  we evaluate  the integral (in $d\k$) 
of both sides of equation \eqref{logdetapp} along the circle    $|\k-\k_n|=\epsilon$. 
The only term in \eqref{logdetapp} which may have a pole at $\k=\k_n$ is the first integral,  which we now set out to compute.

We start with observing that 
\be\label{Q(z)}
Q(z):=\Psi_{21}(z)\Psi_{12}'(z) - \Psi_{11}(z)\Psi_{22}'(z)   = \lim_{x\to z} \frac 1{x-z} \le(1-\det \le[
\begin{matrix}
\Psi_{11}(z) & \Psi_{12}(x)\\
\Psi_{21}(z)  & \Psi_{22}(x)
\end{matrix}
\ri]  \ri)
\ee
 and thus we want to compute the determinant appearing above.
\bl
\label{Fayid}
We have
\be\label{fay}
S(x,z):= \frac 1{x-z} \det \le[
\begin{matrix}
\Psi_{11}(z) & \Psi_{12}(x)\\
\Psi_{21}(z)  & \Psi_{22}(x)
\end{matrix}
\ri] = \frac {C_0 h(z) h(x)  \Theta(\mathfrak u(z) -\mathfrak u(x) + W-W_0)}{\Theta(\mathfrak u(z)-\mathfrak u(x) -W_0)\Theta(W-W_0)},
\ee
where $C_0$ is defined by \eqref{defC_0}.
\el
\begin{proof}
Refer to the matrix $\Psi$ in Theorem \ref{theoremPsi}. Factoring out $h(z)h(x)$, which appears on both sides,  identity 
\eqref{fay} amounts to  
\bea
\nonumber
\det \le[\begin{array}{cc}
\ds  \frac {C_0 \Theta( \mathfrak u(z)- \mathfrak u(\infty)  + W - W_0) }
{\Theta(W-W_0) \Theta(\mathfrak u(z) - \mathfrak u(\infty) - 
W_0)} & 
 \ds \frac {C_0 \Theta( -\mathfrak u(x)- \mathfrak u(\infty)  + W- W_0) }
 {\Theta(W-W_0)\Theta(-\mathfrak u(x) - \mathfrak u(\infty)  -
W_0)}\cr
\ds  \frac {-C_0\Theta( \mathfrak u(z)+ \mathfrak u(\infty)  +W -  W_0)}
{\Theta(W-W_0)\Theta(\mathfrak u(z) + \mathfrak u(\infty)  -
W_0)} & 
 \ds \frac {-C_0\Theta( -\mathfrak u(x) + \mathfrak u(\infty)  + W- W_0)}
 {\Theta(W-W_0)\Theta(-\mathfrak u(x) +\mathfrak u(\infty) -  W_0)}
 \end{array}\ri]\\=
 \label{fay1}
 \frac {C_0(x-z) \Theta(\mathfrak u(z) -\mathfrak u(x) + W-W_0)}{\Theta(\mathfrak u(z)-\mathfrak u(x) -W_0)\Theta(W-W_0)}.
\eea
This is an instance of the famous Fay identities (\cite{Faybook}, page 33). 
The idea is to compare the two sides of \eqref{fay1} as functions of $z,x$ and verify that they have the same poles and the same periodicity 
around the $A,B$ cycles. Then a simple argument using the non-specialty of the divisor of degree $g$ whose image is 
$W-W_0 -\mathcal K$, proves that they must be proportional to each other.
Evaluation of the proportionality constant is achieved by noticing that the left side of \eqref{fay1}  tends to $[h(z)]^{-2}$ when $z=x$ 
(because it gives exactly $[ h(z)]^{-2}\det \Psi(z)$, where $\det \Psi\equiv 1$, see Theorem \ref{theoremPsi}).
Observe now the right hand side: expression \eqref{B2} from  Lemma \ref{lemmafay} implies that 
$\lim_{x\to z}  \frac {x-z}{\Theta (\mathfrak u(z) -\mathfrak u(x) + W-W_0) } =  \frac 1{C_0 h^2(z)}$. 
The proof is completed.
\end{proof}
We now  need to examine the behavior of the denominator of the right side of \eqref{fay} along the diagonal $z\sim x$. Using \eqref{B2} and substituting  $x = z+\delta$ and $\delta\to 0$
\bea
\Theta(\mathfrak u(z) -&&\!\!\! \mathfrak u (z+\delta) - W_0) =\cr
&&=
\le(-\delta{ \vec \omega}^t -\frac {\delta^2} 2{\vec  \omega}^{t'}\ri)\nabla \Theta(-W_0) + 
\frac {\delta^2}{2} \vec \omega^t\nabla^2 \Theta(-W_0) \vec \omega + \mathcal O(\delta^3)
\nonumber \\
&\&=\le(\delta \vec \omega^t +\frac {\delta^2} 2\vec  \omega^{t'}\ri)\nabla \Theta(W_0) + \frac {\delta^2}{2} \vec \omega^t\nabla^2 \Theta(W_0) \vec \omega + \mathcal O(\delta^3)\ds \nonumber \\
&\&=\delta C_0 h^2(z) +\delta^2 C_0 h h'  + \frac {\delta^2}2 \vec \omega^t\nabla^2 \Theta(W_0) \vec \omega + \mathcal O(\delta^3).
\eea
Notice now that the vector 
 $W_0 = \frac 1 2 \le(-{\bf e}_1 - {\bf e}_g +\tau {\bf e}_1\ri)$ satisfies the conditions in Lemma \ref{oddchar} 
with $\vec n = {\bf e}_1$.  
Then, writing the statement \eqref{djdkth} of  Lemma \ref{oddchar} in matrix form as
\be
\nabla^2\Theta(W_0) = -i\pi \vec n \nabla \Theta(W_0)^t  -i\pi  \nabla \Theta(W_0)\vec n^t,\ \ \vec n = {\bf e}_1,
\ee
and using \eqref{B2}, we obtain
$\vec \omega^t\nabla^2 \Theta(W_0) \vec \omega  = -2i\pi C_0h^2(z) \omega_1(z)$. 
Thus, 
\be
\Theta\le(-\int_{z}^{z+\delta} \hspace{-15pt}\vec  \omega \,{d\z} - W_0\ri) 
 =\delta C_0 h^2(z) + \delta^2 C_0 h h'  -i\pi \delta^2C_0 h^2(z) \omega_1(z). 
\ee
Hence,  using \eqref{B2} again and expanding in Taylor series as $\delta \to 0$
\begin{equation}
\begin{split}
S(z+\delta,z) & = \frac 
{C_0h(z) h(z+\delta) \Theta\le(-\int_{z}^{z+\delta} \hspace{-1pt}\vec  \omega \,{d\z} + W - W_0\ri)}
{\Theta(W-W_0)\Theta\le(-\int_{z}^{z+\delta} \hspace{-1pt}\vec  \omega \, {d\z} - W_0\ri)}\\
 & = \frac {C_0h(z) \le(h(z) + \delta h'(z) \ri)  \le(\Theta\le(W - W_0\ri) - \delta \sum_{j} \omega_j(z) \nabla_j \Theta(W-W_0)\ri)}
 {\Theta(W-W_0) \le(\delta C_0 h^2(z) + \delta^2 C_0 h h'  -i\pi \delta^2 C_0 h^2(z) \omega_1(z)\ri) } \\
& =\frac 1 \delta - \vec \omega(z) \cdot\frac{  \nabla \Theta(W-W_0) }{\Theta(W-W_0)} +  i\pi \omega_1(z) + \mathcal O(\delta).
\end{split}
\end{equation}
Thus we have obtained that the function $Q(z)$ (cf. \eqref{Q(z)}) is given by 
\be
Q(z) =\vec \omega(z) \frac {\nabla \Theta(W - W_0)}{\Theta(W-W_0)} - i\pi \omega_1(z)
\ee
and, so,
\bea
\oint_{B_1} {Q}(z)\frac{d z}{i\pi} =  \sum_{j=1}^g \frac{\tau_{1j}}{i\pi} \frac{\nabla_j 
\Theta(W - W_0)}{\Theta(W-W_0)} - \oint_{B_1}\!\!\!\! \omega_1 d z  
= \pa_\k \ln \Theta(W(\k)-W_0) - \tau_{11},
\eea
where we have used the fact that $\pa_\k W(\k) = \frac 1{i\pi} \vec \tau_1$.  
Finally, we obtain 
\bea\label{1q2w}
-\l \pa_\l \ln \det \le(\Id - \frac 1 \lambda K\ri )
= \pa_\k \ln \Theta(W(\k)-W_0) - \tau_{11}+ \frac{2 \k}{i\pi}\tau_{11}  - 
\oint_{B_1} \Delta'(z)\frac{ d z}{i\pi} + \mathcal O(\k^{-1}).  
\eea
Now the integral of the left-hand side of \eqref{1q2w} about the small circle gives precisely $2\pi i$ times the number of exact eigenvalues contained within the circle $|\k-\k_n|=\epsilon$.  We have just shown that the corresponding integral of the right-hand side gives $2\pi i$ times the number of approximate eigenvalues within the same circle. This number is equal to $1$ if $\epsilon$ is small enough because the zeroes of $\Theta(W(\k)-W_0)$ for $\k\in \R$ are all simple. This latter fact will be independently proven in  Lemma \ref{lemmatransversal} below. This shows that   
 there is only one exact eigenvalue $\k_n^{exact}$ in the $\epsilon$ circle around $\k_n$, so  
the proof of Theorem \ref{finalth} is complete.
\end{proof}

Recall that the eigenvalues $ \l_n = {\rm e}^{-\k_n^{exact}}$ 
of $\wh K$ are also singular values of $H^{-1}_e$. That is why ${\rm e}^{-\k_n}$, where $\k_n$ are approximate eigenvalues, see
Definition \ref{def-eigen}, will be called approximate singular values. The following corollary is an immediate consequence of
Theorem \ref{finalth}.

\bc
\label{cor-finalth} 
If $n\in\N$ is sufficiently large, then there is exactly one singular value $\l_n = {\rm e}^{-\k_n^{exact}}$ within a distance
\be
 |\k_n^{exact}-\k_n| =\mathcal O ({\k_n}^{-\frac 1 2})
\ee
of each approximate singular value 
${\rm e}^{-\k_n}$.
\ec 
\section{Asymptotics of the  singular values and singular functions}
\label{secdivisor}
In view of  Theorems \ref{hiteig} and \ref{finalth}, 
we need to see when, how often and with what tangency  the straight line $W(\k), \ \k\in\R,$  given by \eqref{wukappa},
intersects the theta divisor in the Jacobian $\mathbb J_\tau$ (see definition in Appendix \ref{thetaapp}).
Note that in Theorem \ref{hiteig} the vector $W(\k)$ belongs to $\R^g$ for real values of $\k$; therefore we are interested in 
studying the implicit equation $\Theta(W-W_0)=0$ for $W\in \R^g$. For this purpose we prove the next proposition.
\bp
\label{ThetaDiveig}
If $W\in \R^g$ and $W_0$ is given as in \eqref{wukappa} then
\be
\Theta\le ( W - W_0\ri)=0~~\Longleftrightarrow~~~  W = \sum_{\ell=1}^{g-1}\le( \mathfrak u (p_\ell) -\mathfrak u(a_{j_\ell}) \ri)\mod\ \Z^g ,
\ee 
where  $p_\ell= (z_\ell, R_\ell), \ell =1,\dots, g-1,$ are arbitrary points with $z_\ell\in [a_{2\ell}, a_{2\ell+1}],$ $\ell =1,\dots, g-2,$ and $z_{g-1}\in \R\setminus [a_1,a_{2g+2}]$ (i.e. belonging to the cycles $A_{1+\ell}, \ell =1,\dots, g-1$), and $j_\ell \in J = \{1, 5,7, 9,11, \dots,2g-1\}$. 
\ep
\br
The  additional information in Proposition \ref{ThetaDiveig} relative to Corollary \ref{generalThetadiv} is that we can localize the points of the degree $g-1$ divisor within the specified segments.
\er
\begin{proof}
The following proof is essentially a rephrasing of the one contained in  \cite{Faybook}, Chapter VI.
By  Theorem \ref{generalThetadiv} the Theta function will vanish if and only if 
\bea
W- W_0=  \sum_{\ell =1}^{g-1}\mathfrak u (p_\ell ) + \mathcal K\ ,
\label{condW}
\eea
for some choice of $g-1$ points $p_\ell  = (z_\ell, R_\ell)$.  
According to \eqref{W0long}  we can rewrite condition \eqref{condW} in the Jacobian $\mathbb J_\tau$ (recalling that the Abel maps of all branchpoints are half-periods) as 
\bea
W = \sum_{\ell=1}^{g-1} \le(
\mathfrak u(p_\ell) - \mathfrak u (a_{j_\ell})
\ri).
\eea
However, we need $W$ to be purely real and we want to conclude that the only possibility is that there is exactly one point $z_j$ in each of the complementary arcs containing $a_{4+2j}$ (on one or the other sheet), which will conclude the proof. 

Denote $\mathcal D_P$ the divisor consisting of the points $(z_1, R_1) , \dots,( z_{g-1}, R_{g-1})$. We shall show that these points must all be real (which means both components $(z_j,R_j)$ are real, thus in particular $z_j$ belongs to the ``gaps'').
Suppose (by contradiction) $\mathcal D_P$ is not real, but $W\in \R^g$ nonetheless. 
The functions $\omega_j$ are all real (i.e. $\ov {\omega_j(z)} = \omega_j(\ov z)$) and thus 
\be
\ov W  
= \sum_{\ell=1}^{g-1} \le(
\mathfrak u(\ov{p_\ell}) -\ov { \mathfrak u (a_{j_\ell})} 
\ri)
=  \sum_{\ell=1}^{g-1} \le(
\mathfrak u(p_\ell) - \mathfrak u (a_{j_\ell})
\ri)
=W.\label{74}
\ee
Since the imaginary parts of $\mathfrak u(a_j)$ are all half-periods, see \eqref{abela_j}, then equation \eqref{74} can be rewritten as follows
\be
\mathfrak u(\mathcal D_P)  = \mathfrak u(\mathcal D_{\ov P}) -\overbrace{ 2i \Im \le(\sum_{\ell=1}^{g-1} \mathfrak u (a_{j_\ell})\ri)}^{\in \Lambda_\tau} =   \mathfrak u(\mathcal D_{\ov P}) \mod \Lambda_\tau
\ee
and hence the divisor consisting of the conjugate points  $\mathcal D_{\ov P}$ (which is not the same) is equivalent in the Jacobian 
$\mathbb J_\tau$. 
A standard theorem (Abel's theorem, \cite{FarkasKra}, Theorem III.6.3) guarantees that there is a function $F(z)$ on the 
Riemann surface with poles at $\mathcal D_z$ and zeroes at $\mathcal D_{\ov z}$. This means that the divisor $\mathcal D_z$ 
is {\em special} (Definition \ref{defspecial}) \cite{FarkasKra}.
For a hyperelliptic surface like ours, this can only be if there is at least a pair of points on the two sheets of the 
form $p_1 = (z_1,R_1)$, $p_2 = (z_2,R_2)$ with $z_1=z_2, R_1=-R_2$. Denote by  ~~$\wh{ }$~~ the exchange of sheet. 
Then the divisor $\mathcal D= \mathcal D_P$ can be written as $ 
\mathcal D = \mathcal D_1 + \wh{ \mathcal D_1} + \mathcal D_0
$, with $\mathcal D_1$ not empty. Then re-define 
\be
\mathcal D'  = \mathcal D_1 + \wh{\ov{ \mathcal D}_1} + \mathcal D_0
\ee
which is then non-special.
We still have $\mathfrak u(\mathcal D') = \mathfrak u(\ov {\mathcal D'})$ and -again by Abel's theorem- there would exist a non-constant function $\wt F$ with poles at $\mathcal D'$ and zeroes at $\ov {\mathcal D'}$: but  now the divisor $\mathcal D'$ is -by construction- non special and of degree $g-1$ and thus we reach a contradiction.

We thus have established that all points $p_j$ must belong to gaps on the real axis. 
It is then easily seen using the linear independence of the columns of $\tau$, that they must belong each to 
the appropriate $A$-cycle,  as stated.\end{proof}

The Theta divisor  $(\Theta)$ denotes the whole zero locus of $\Theta$ in the Jacobian (i.e. $\C^g$), 
see Definition \ref{def-thetadiv}.
In view of Proposition \ref{ThetaDiveig}, we shall denote by $(\Theta)_\R$ the 
locus of $W \in \R^g$ such that
 $\Theta(W-W_0)=0$ 
(i.e. a particular real section of $(\Theta)$). 
Then, Proposition \ref{ThetaDiveig} and \eqref{wukappa} imply the following corollary.
\bc\label{cor-eigenv-g}
The approximate eigenvalues $\k_n$ are given by 
the intersections of the straight line $W(\k)$ (see \eqref{wukappa}) and the surface $(\Theta)_\R$ 
defined parametrically 
inside the $g$-dimensional \em real torus $\mathbb T_g = \R^g \mod \Z^g$ by
\be\label{Theta-div-g}
\vec X(p_1,\dots, p_{g-1}) = \sum_{\ell =1}^{g-1} \int_{a_{j_\ell}} ^{p_\ell}  \vec \omega(\z)d\z, 
\ee
where 
the points $p_\ell = (z_\ell, R_\ell)$ belong to the cycles $A_{\ell+1}$, $\ell = 1,\dots, g-1$\footnote{
Recall that the expression $\int_{a_{j_\ell}}^{p_\ell} \vec \omega (\z)d \z$ means $\pm \int_{a_{j_{\ell}}} ^{z_\ell} \vec \omega(\z) d \z$ with the sign depending on whether $p_\ell$ belongs to the first ($+$) or second ($-$) sheet. }
and $ \vec X (p_1,\dots, p_{g-1} ) =[ X_1(p_1,\dots, p_{g-1}), \dots,   X_g(p_1,\dots,p_{g-1})]^t$.
\ec
\br
\label{rem81}
Formula \eqref{Theta-div-g} represents a map from the $g-1$-dimensional {\em real} torus $A_2\times \dots \times A_g$ into the $g$-dimensional {\em real} torus $\mathbb T_g$. If we think of it on the respective universal covering spaces, then one can see that as one of the points $p_\ell$ makes a turn on its corresponding cycle $A_{\ell+1}$, the $(\ell+1)$st component of $\vec X$ is incremented by one due to the normalization of the vector $\vec \omega$, see \eqref{normalization}.
\er
We represent the torus by choosing a fundamental domain $[-\frac 12, \frac 1 2]^g$ with the opposite sides identified. 
The  pictures of the parametric surfaces  $(\Theta)_\R$  in the cases $g=2,3$ are shown in Figure \ref{ThetaDivisor}.

\begin{figure}[h]
\includegraphics[width=0.48\textwidth]{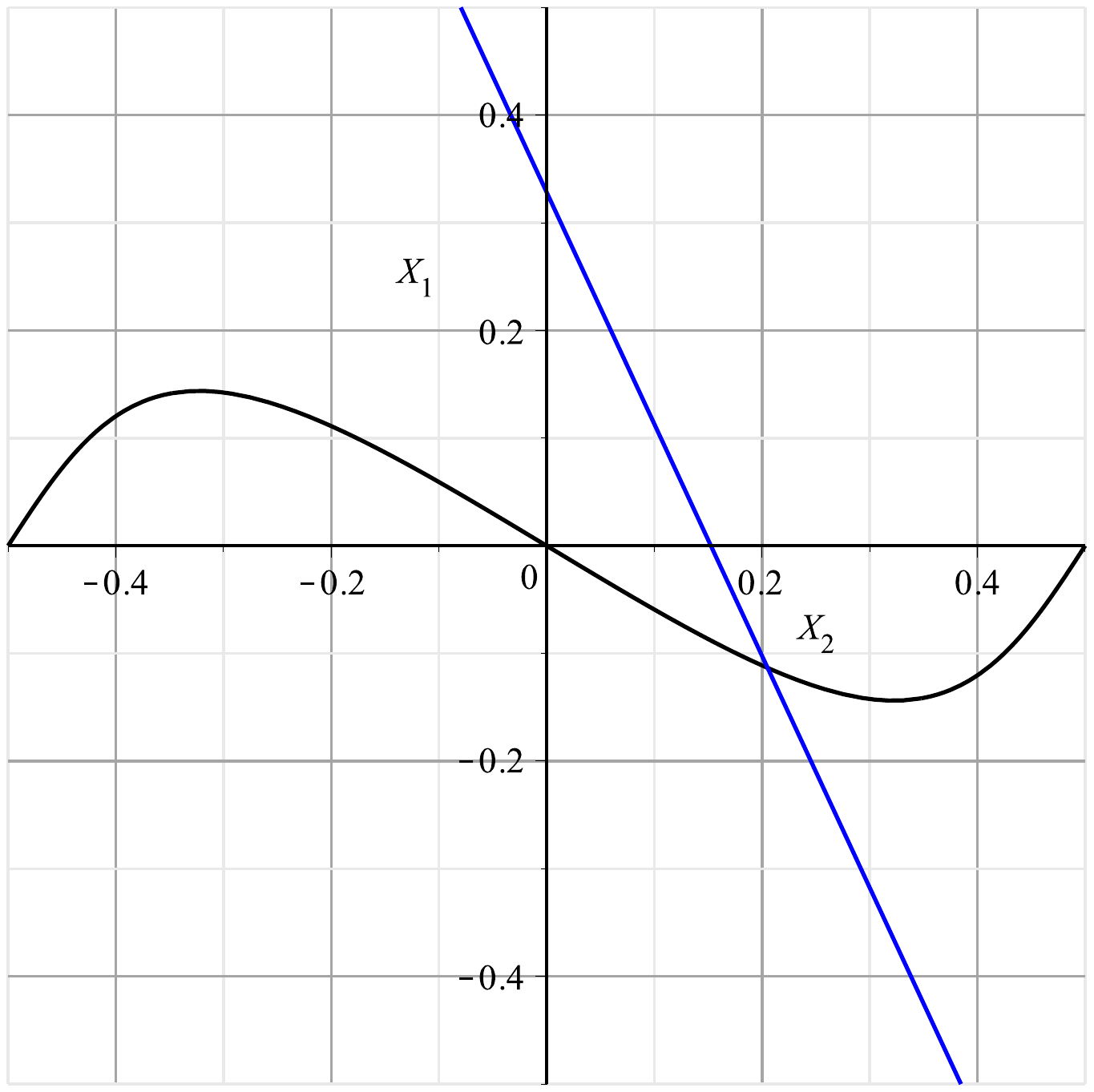}
\includegraphics[width=0.48\textwidth]{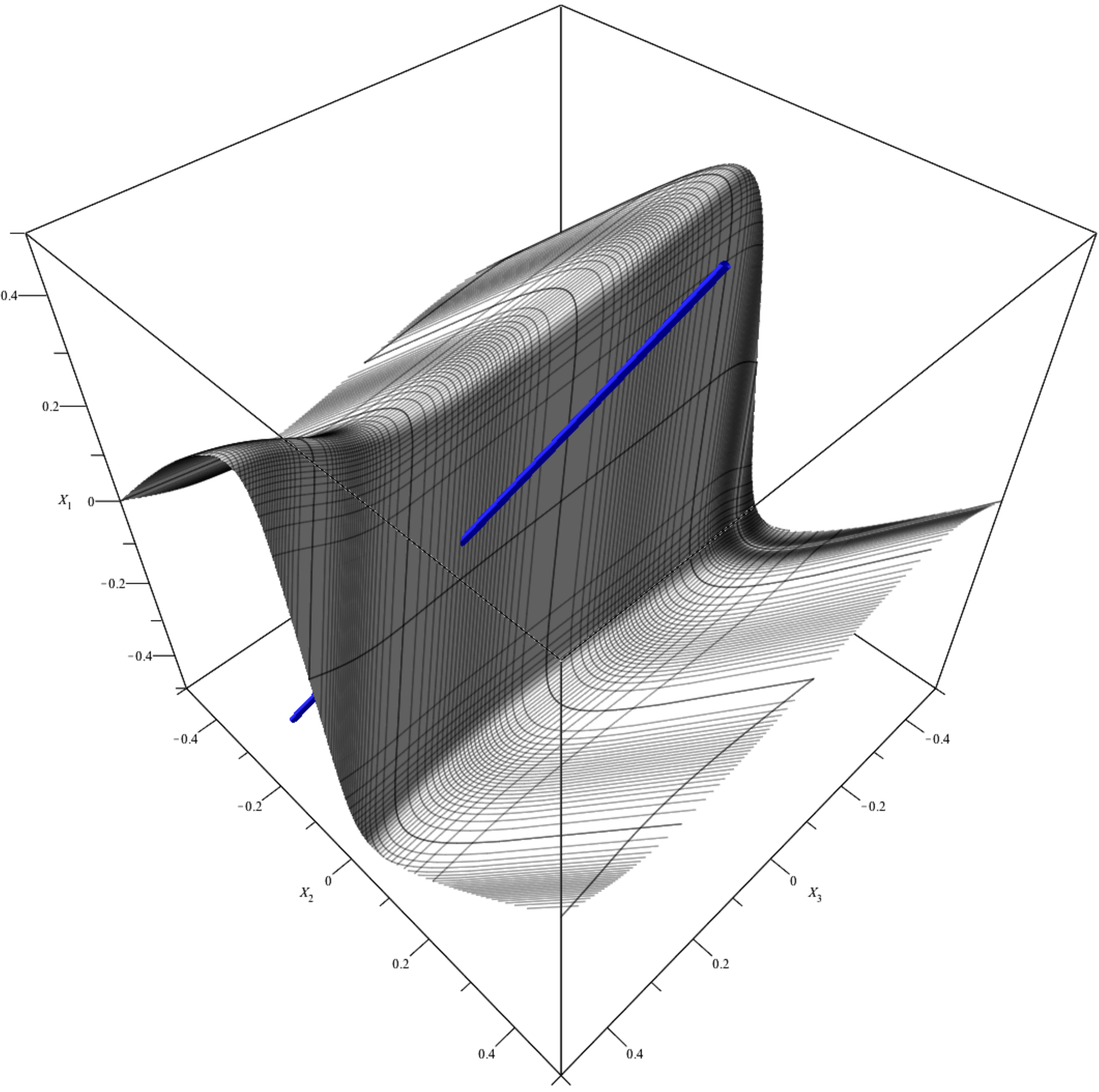}
\caption{Case $g=2$ (left panel): the image $\mathfrak u(z)$, $z\in \R \setminus [a_1,a_6]$,  of the 
surface $(\Theta)_\R$  (on the square $[-1/2,1/2]^2$ with periodic boundary identifications) as per Proposition \ref{ThetaDiveig}. 
The model problem \eqref{modelRHP} is unsolvable for some $\k_n$ if and only if  the straight line  $W(\k)\in \R^2 \!\!\mod \Z^2$ 
(plotted as an example)  intersects the plotted curve at $\k=\k_n$. 
Case $g=3$ (right panel): the image of $\pm  \int_{a_1}^{z_1} \vec \omega d\z \pm  \int_{a_4}^{z_2} \vec \omega d\z$, $z_1\in \R \setminus [a_1,a_8]$ and $z_2\in [a_4,a_5]$. In blue we plotted an example of the line $W(\k)\in \R^3 \!\! \mod \Z^3$: the values of $\k$ corresponding to the intersections are the approximate eigenvalues $\k_n \sim -\ln\l_n$. In both cases we have $X_1 = F(X_2)$ and $X_1 = F(X_2,X_3)$, respectively, as explained in Lemma \ref{Thetasmooth}. The line $W(\k)$ is parallel to the (imaginary part of the) first column of the normalized matrix of $B$-periods and  intersects the surface by always forming an acute angle with the upward normal (Lemma \ref{lemmatransversal}).
}
 \label{ThetaDivisor}
\end{figure} 
\bl
\label{Thetasmooth}
Each connected component of the surface $(\Theta)_\R$ on the universal covering of $\mathbb T_g\sim \R^g$  
is smooth.
Moreover, it 
 can be expressed as the graph of a function $X_1 = F(X_2,\dots X_g)$, where the function  $F: \R^{g-1}\to \R$ is odd and periodic 
of period $1$ in each argument. 
\el
\begin{proof}
By Proposition \ref{ThetaDiveig} the points of $(\Theta)_\R$ are parametrized by a divisor of $g-1$ points $p_j= (z_j, R_j)$ chosen arbitrarily in the  $A_{j+1}$ cycle ($j=1,\dots, g-1$). Therefore, with notation \eqref{Theta-div-g},
\be
\Theta( \vec X(\vec p)  - W_0)\equiv 0.\label{impl}
\ee
We want to study the gradient of $\Theta(W-W_0)$ with respect to $W\in \R^g$ at the points where $\Theta(W-W_0)=0$; 
we notice that since $\Theta$ is periodic on the lattice  $\Z^g$ \eqref{thetaperiods}, the surface $\Theta(W-W_0)=0,\ \ W\in \R^g$ is certainly periodic.
To this end we introduce
\be
v(z) = \vec \omega(z) \cdot \nabla \Theta(W-W_0).
\label{vdiff}
\ee
By Lemma \ref{lemmafay}, $v(z)$ is not identically zero and vanishes at the points $p_j$ (see  Corollary \ref{cor-eigenv-g}).
Therefore it is of the form 
\be
v(z) = C \frac{\prod_{j=1}^{g-1}(z-z_j)}{R(z)}\ , \ \ C\neq 0.
\ee
Equations \eqref{vdiff} and  \eqref{normalization} (normalization  of  $\omega_j(z)$) imply that
\be\label{partTheta}
\frac\pa {\pa W_s} \Theta(W-W_0)  = \oint_{A_s} v(z) d z, \  \ \  s=1,\dots, g,
\ee
where $W_s$ denotes the $s$-th component of the vector $W\in\R^g$.
In particular, the first component of the gradient of $\Theta$ in \eqref{partTheta} is given by $C \oint_{A_1}\frac {\prod_{j=1}^{g-1} (\z-z_j) d \z }{R(\z)}$. This expression never vanishes on the surface, because we have established that none of the points $z_j$ belongs to $A_1$ and thus the integrand has a definite sign. Thus, by the implicit function theorem, it follows that we can express $X_1$ as a  smooth function $F(X_2,\dots, X_g)$ of the remaining components of $\vec X$, locally, around any point of the surface and:
\be
\frac {\pa X_1 }{\pa X_s} =- \frac {\oint_{
A_s} \frac {\prod_{j=1}^{g-1} (\z-z_j) d \z }{R(\z)}}{\oint_{
A_1} \frac {\prod_{j=1}^{g-1} (\z-z_j) d \z }{R(\z)}}\in\R,~~~~~s=2,\dots,g.
\label{normal}
\ee
%
%
%
We now turn to global injectivity.  If there were two $\vec p^{(1)}, \vec p^{(2)}\in A_2\times \dots \times A_g$ with the same image in the Jacobian under the Abel mapping, then there would exist (by Abel's 
theorem) a non-constant and nonzero meromorphic function with at most $g-1$ poles at the points $p^{(1)}_j$. But this divisor 
is non-special (Definition \ref{defspecial} and following remark) because there is at most one point in each cycle $A_j$. 
This is a contradiction, which proves the required statement.

Thus we have proved that  the map $\vec X:A_2\times \dots \times A_g\to \mathbb T_g$  defines  a smooth local function $X_1 = F(X_2,\dots X_g)$ in the neighborhood of any point of the surface. Denote by  $\pi_{\wh 1}:\mathbb T_g\to \mathbb T_{g-1}$ the projection onto the coordinates $X_2,\dots,X_g$, $\mod \Z$: then the assertions proven above imply that $\pi_{\wh 1}\circ\vec X: A_2\times \dots \times A_g\to\mathbb T_{g-1}$ is a smooth map, in particular it is an open map. Now, $A_2\times \dots \times A_g$ is a topological space without boundary (meaning that each point is an interior point) and $\pi_{\wh 1}\circ \vec X$ is smooth and open between two compact spaces. Thus the range of the map cannot have a boundary (by a simple compactness argument) and hence it is surjective. In terms of the universal covering spaces this means that the domain of $F$ is the whole $\R^{g-1}$, and therefore, by a compactness argument,  $F$ is a  {\em global} smooth function.
Hence for each $N \in \Z$ the corresponding connected component of $(\Theta)_\R$ is given by  
 $X_1 = F(X_2,\dots X_g)+N$.

Finally we prove the symmetry: since $W_0$ is a half-period of the Jacobian, then $\Theta(W-W_0)=0$ if and only if $\Theta(W+W_0)=0= \Theta(-W-W_0)$ (using that $\Theta$ is an even function).  Therefore the surface $(\Theta)_\R$ is symmetric about the origin, which means that $F(x_2,\dots x_g)$ is an odd function.
\end{proof}
\br
On the universal covering of $\mathbb T_g$ (i.e. $\R^g$) the surface $(\Theta)_\R$ is then represented as 
a countable union of the graphs $X_1 = F(X_2,\dots, X_g) + n$ with $n\in \Z, X_2,\dots, X_g\in \R$.
\er
\bl
\label{lemmatransversal}
{\bf (1)} The surface $(\Theta)_\R$ in $\mathbb T_g$ is orientable. In particular, there is a continuous choice of normal 
direction which forms an acute angle with the fixed vector $i\t_1$ (see Fig. \ref{ThetaDivisor});
{\bf (2)} The intersections of the line $W(\k): \R \to \mathbb T_g$ with $(\Theta)_\R$ are transversal and, as a consequence,
  the zeroes of $\Theta(W(\k)-W_0)$ as a function of $\k$ are all simple.
\el
\begin{proof}
{\bf (1)} According to \eqref{normal} a choice of normal direction to the surface {$(\Theta)_\R$ at the point parametrized by $\vec p\in A_2\times \dots\times  A_{g}$ (see Proposition \ref{ThetaDiveig})}  is the vector 
\be
\vec n = \vec n (\vec p) = \le[
\oint_{
A_1} \frac {\prod_{j=1}^{g-1} (\z-z_j) d \z }{R(\z)}, \dots, \oint_{
A_g} \frac {\prod_{j=1}^{g-1} (\z-z_j) d \z }{R(\z)}
\ri]^t.
\ee
Then
\be
i\tau_1\cdot \vec n =i \sum_{j=1}^g \oint_{B_j} \omega_1d \z\oint_{
A_j} \frac {\prod_{j=1}^{g-1} (\z-z_j) d \z }{R(\z)}.
\label{dotp}
\ee
By the Riemann Bilinear Identity (\cite{FarkasKra}, equation (3.1.1), p. 64),  we have
\begin{equation}
i\tau_1\cdot \vec n  =i \sum_{j=1}^g \overbrace{\oint_{A_j} \omega_1d \z}^{=\delta_{1j}} \oint_{
B_j} \frac {\prod_{j=1}^{g-1} (\z-z_j) d \z }{R(\z)} 
=2 \sum_{j=2}^{g} \int_{a_{2j-1}}^{a_{2j}} \frac {i\prod_{j=1}^{g-1} (\z-z_j) d \z }{R{_+}(\z)}.
\label{dotp2}
\end{equation}

Each integral in the latter sum is of the same sign. Indeed, on any main arc (branchcut)
$R(\z) = \sqrt{\prod (\z-a_j)}\in i\R$ and the sign of $\Im (R_+(\z))$ on the main arcs alternates between the neighboring main arcs.
The product in  the numerator  of \eqref{dotp2} has exactly one zero in the gap in between.
Thus, the  sign of the (real) integrand in the last term of \eqref{dotp2} does not change from one
main arc $[{a_{2j-1}},a_{2j}]$ to another. Therefore, $i\tau_1\cdot \vec n\neq 0$ for any point of 
the surface $(\Theta)_\R$ and ${\rm sign}(i\tau_1\cdot \vec n) $ is constant on $(\Theta)_\R$.

{\bf (2)} The surface  $(\Theta)_\R$ is the zero level surface of $\Theta(z)$ and, therefore, 
the vector  $\nabla\Theta$ is parallel to $\vec n$.
By part (1), $\nabla\Theta$
cannot be orthogonal to 
$\frac d{d\k} W(\k)  = i\tau_1$. Now $\Theta$ is an analytic function and the derivative 
$ \frac d{d\k}  \Theta(W(\k)-W_0)  = \frac d{d\k}  W(\k)\cdot  \nabla \Theta$ does not vanish on $(\Theta)_\R$.  
Thus the zeroes of $\Theta(W(\k)-W_0)$ are simple.
\end{proof}

\bc\label{cor-1-inters}
The line $W(\k)$ given by \eqref{wukappa} intersects each connected component of $(\Theta)_\R$ in  the universal cover exactly once. 
\ec
\begin{proof}
Consider the function $G(X_1,\dots, X_g)= X_1 - F(X_2,\dots, X_g)$, where $F$ is the function whose graph defines the 
hypersurface $(\Theta)_\R$. Then the composite function  $G(\vec W(\k))$ is strictly monotonic   by the transversality of 
Lemma \ref{lemmatransversal}. Thus it vanishes exactly once. \end{proof}

The straight line $W(\kappa)$ has slopes $\frac {\tau_{1\ell}}{\tau_{11}}$ and it traverses the fundamental domain from top to bottom with period $\frac {i\pi}{\tau_{11}}$. 
It should be clear that this line must intersect {\em at least one} connected component of $(\Theta)_\R$ for each 
increment $\k \mapsto \k + \frac {i\pi}{\tau_{11}}$.

\bth
\label{maxexcursion} Let  $X_1= F(X_2,\dots, X_{g-1})+ n $ be an equation of the $n$-th connected component of $(\Theta)_\R$ 
(see Lemma \ref{Thetasmooth}), where $n\in\Z$.
Then the ``maximum excursion'' of each connected component of $(\Theta)_\R$ in the $X_1$ direction is bounded by $g-1$, namely, 
\be
-\min_{\vec X\in \R^{g-1}} F(\vec X) = \max_{\vec X\in \R^{g-1}} F(\vec X)\leq \frac {g-1}2,
\ee
where $\vec X= (X_2,\dots,X_g)$.
\et
\begin{proof}
Consider each term in 
$X_1 = X_1(\vec z) = \sum_{\ell=1}^{g-1} (\pm) \int_{a_{j_\ell}}^{z_\ell} \omega_1(\z) d\z$
separately and with the sign that yields a positive contribution.  First of all, since our 
points $z_\ell$ are on the real axis, each term is real and, thus, we can consider the real part of $X_1$. 
But then each term is the restriction  of the  harmonic functions  $h_\ell (z):= \Re \int_{a_{j_\ell}}^z \omega_1d\zeta$ to a real segment.
Each restriction is  the same harmonic function up to an additive constant. 
Each of them is a harmonic function on $\bar\C\setminus \bigcup_{j=1}^3 [a_{2j-1}, a_{2j}]$ (harmonic also at $\infty$).
Its boundary value on each main arc $[a_{2j-1}, a_{2j}]$ is constant and equal to $0$ or $\pm \frac 1 2$ (depending on the value of $\ell$). 
Thus the maximum and minimum of each $h_\ell(z)$ on the {\em whole} $\C \setminus I_i\cup I_e$ are $\pm \hf$. So, (since 
the sign on the other sheet is the opposite one) each term can contribute at most a ``maximum excursion'' of $1/2$. 
\end{proof}
\br
After extensive numerical investigation we observed that the stronger statement might be true: for any
 genus $g$, the maximum excursion of each component of the surface $(\Theta)_\R$ is at most $1$.  
\er
The following corollary  provides the bounds on the number $n$ of intersections of 
the line $W(\k)$, see \eqref{wukappa},
with $(\Theta)_\R$ when 
$\k \in \le [\k_0 , \k_0 + \frac{N(g-1) i\pi}{\tau_{11}}\ri)$, where $\k_0>0$ and $N\in\N$.
\bp\label{prop-ass}
For any $N\in\N$ the number $m(N)$ of intersections of 
the line $W(\k)$, see \eqref{wukappa}, where $\k \in \le [\k_0 , \k_0 + \frac{N(g-1) i\pi}{\tau_{11}}\ri)$,
with $(\Theta)_\R$ is bounded by
\be\label{bound_n}
(N-1)(g-1)\leq m(N) \leq (N+1)(g-1).
\ee
\ep
\begin{proof} The number of different connected components of $(\Th)$ in the consecutive $N(g-1)$ periods in the $x_1$ direction cannot exceed 
$(N+1)(g-1)$. Now the upper bound follows from Corollary \ref{cor-1-inters}. On the other hand, 
 any  consecutive $N(g-1)$ periods in the $x_1$ direction contain at least $(N-1)(g-1)$  different connected components of $(\Th)$
 (that is, these components are completely contained within the aforementioned periods). Using Corollary \ref{cor-1-inters} again,
 we obtain the lower bound in \eqref{bound_n}.
\end{proof}
\bc \label{cor-ass}
Let $\k_n$ be the $n$-th approximate eigenvalue, that is, the  $n$-th zero of  
\be
 \Theta\le ( W(\k) - W_0\ri  )=0,\label{zeroes-of-theta}
\ee
where $W(\k)$ and $W_0$ are defined by \eqref{wukappa} and the labelling of solutions of \eqref{zeroes-of-theta} fixed by Remark \ref{enumeration}.
Let  $2\l_n=e^{-\k_n+\ln 2}$ denote the corresponding approximate singular values. Then
\be\label{asskap}
\k_n=n\pi \frac{i }{\t_{11}}+O(1),~~~~~~~~~\qquad \la_n=e^{-n\pi \frac{i}{\t_{11}}+O(1)}.
\ee
\ec
The proof of Corollary \ref{cor-ass} follows from \eqref{wukappa} and Proposition \ref{prop-ass}. 
Combined with Corollary \ref{cor-finalth}, Corollary  \ref{cor-ass} yields the following result.
\bth \label{theo-ass}
Let $\k^{exact}_n$ be the $n$-th  exact eigenvalue of the RHP  \eqref{RHPGamma}, see Definition \ref{def-eigen}, and let 
$2\l^{exact}_n=e^{-\k^{exact}_n+\ln 2}$ denote the corresponding  singular values of $\CH^{-1}_e$. Then
\be\label{asskap-exact}
\k^{exact}_n=n\pi \frac{i }{\t_{11}}+O(1),~~~~~~~~~\qquad 2\la^{exact}_n=e^{-n\pi \frac{i}{\t_{11}}+O(1)}.
\ee
\et
\subsection{Comparison with the two-interval (genus 1) case}\label{subsec-svass}

We would like to compare  the one interior and two exterior interval case (i.e. $g=2$ in \eqref{def-hilb}--\eqref{int-int})  with the corresponding asymptotics in the two interval (four-point) case ($I_e=[a_1,a_2]$, $I_i=[a_3,a_4]$). The latter is obtained from the former in the limit when  the rightmost exterior interval $[a_5,a_6]$ collapses into a point $a$ as $a_5=a-\e,~a_6=a+\e$ with $\e\ra 0^+$. 
The singular values of the FHT (with no  weight) in the  
four point case was studied in \cite{kt12}.  Since the weight $w(z)$  affects only $d(z)$, but not $ \gg(z)$, see
\eqref{y-def}, the leading large $\k$ asymptotics of singular values for both FHTs remains the same. 
According to \eqref{taumatrix}, \eqref{1stkind}, \eqref{T-n} and \eqref{T-n} (for $g=2$)
\begin{equation}
\begin{split}
\label{tau11}
\t_{11}&=
[1,0] \mathbb A^{-1} \oint_{\hat\g_1}\le[
\begin{matrix}
1\\
\z
\end{matrix}
\ri]\frac{d\z}{R(\z)}=
[1,0] \le[\begin{array}{cc}
1 & -1 \\
0 & 1\end{array}\ri]\dfrac{\le[ 
\begin{array}  {c c c } 
\int_{c_2}\frac{\z d\z}{R(\z)} & -\int_{c_2}\frac{ d\z}{R(\z)} \\
-\int_{c_1}\frac{\z d\z}{R(\z)} & \int_{c_1}\frac{ d\z}{R(\z)} 
\end{array}
\ri]
\le[
\begin{matrix}
\int_{\g_1}\frac{d\z}{R(\z)}\\
\int_{\g_1}\frac{\z d\z}{R(\z)}
\end{matrix}\ri]
}{\le | 
\begin{array}  {c c c } 
\int_{c_1}\frac{d\z}{R(\z)} & \int_{c_2}\frac{ d\z}{R(\z)} \\
\int_{c_1}\frac{\z d\z}{R(\z)} & \int_{c_2}\frac{\z d\z}{R(\z)} 
\end{array}
\ri |}
\\
&= \frac{ \int_{c_2\cup c_1}\frac{\z d\z}{R(\z)}\int_{\g_1}\frac{d\z}{R(\z)}- \int_{c_2\cup c_1}\frac{ d\z}{R(\z)}\int_{\g_1}\frac{\z d\z}{R(\z)}}
{\int_{c_2}\frac{\z d\z}{R(\z)}\int_{c_1}\frac{d\z}{R(\z)}- \int_{c_2}\frac{ d\z}{R(\z)}\int_{c_1}\frac{\z d\z}{R(\z)}}=
\dfrac{\le | 
\begin{array}  {c c c } 
\int_{\g_1}\frac{d\z}{R(\z)} & \int_{c_1\cup c_2}\frac{ d\z}{R(\z)} \\
\int_{\g_1}\frac{\z d\z}{R(\z)} & \int_{c_1\cup c_2}\frac{\z d\z}{R(\z)} 
\end{array}
\ri |}{\le | 
\begin{array}  {c c c } 
\int_{c_1}\frac{d\z}{R(\z)} & \int_{c_2}\frac{ d\z}{R(\z)} \\
\int_{c_1}\frac{\z d\z}{R(\z)} & \int_{c_2}\frac{\z d\z}{R(\z)} 
\end{array}
\ri |}.
\end{split}
\end{equation}
Since $\t_{11}$ is the ratio of two determinants, we can replace $\z d\z$  in the numerators of the second rows of each determinant
by $(\z-a)d\z$.
Let us calculate the limits of   $ \int_{c_2}\frac{(\z-a)^k d\z}{R(\z)}$, where $k=0,1$, as    $\e\ra 0^+$. First note that
\be\label{R}
R(\z)=R_0(\z)\sqrt{(\z-a)^2-\e^2},
\ee
where $R_0(\z)=\prod_{j=1}^4 (\z-a_j)^\hf$ is an analytic function near $\z=a$. Because of the shrinking interval $\g_2$,
the large contribution to the integrals over the complementary arc $c_2$ may come only from a neighborhood of $a$. Replacing
this integral with the integral over the corresponding cycle on the Riemann surface $\Rscr$, we obtain
\be\label{approx-int-c2}
\int_{c_2}\frac{(\z-a)^k d\z}{R(\z)}=-\hf\le(\int_a^{a+ir} +  \int_a^{a-ir} \ri)    \frac{(\z-a)^k d\z}{R(\z)} +O(1)
\ee
as $\e\ra 0$, where $r>0$ is a fixed number. Notice that for any $k\in\R^+$ the integrals in the right hand side of 
\eqref{approx-int-c2} are bounded. Indeed, $R_0(\z)$ is bounded and
\be
\le |\frac{(\z-a)^k}{\sqrt{(\z-a)^2-\e^2}}\ri | \leq |\z-a|^{k-1}\qquad {\rm when}~~~~\z\in[a-ir,a+ir].
\ee

In the remaining case $k=0$, we start with the Taylor formula $[R(\z)]^{-1}=[R(a)]^{-1}+(\z-a)(R^{-1})'(\eta)$,
where $\eta\in (\z,a)$. Since $(R^{-1})'(\z)$ is bounded on $[a-ir,a+ir]$, we can rewrite \eqref{approx-int-c2} as
\begin{equation}
\begin{split}
\label{approx1-int-c2}
\int_{c_2}\frac{ d\z}{R(\z)}&=-\frac{1}{2R_0(a)}\le(\int_a^{a+ir} +  \int_a^{a-ir} \ri)    \frac{d\z}{\sqrt{(\z-a)^2-\e^2}} +O(1)\\
&=-\frac{1}{2R_0(a)}\le(\int_0^{\frac{ir}\e} +  \int_0^{-\frac{ir}\e} \ri)  \frac{dt}{\sqrt{t^2-1}}+O(1)=\frac{\ln\e}{R_0(a)}+O(1).
\end{split}
\end{equation}

Now, according to \eqref{tau11} and \eqref{approx1-int-c2},
$\ds \lim_{\e\ra 0} \t_{11}=\lim_{\e\ra 0} \frac{\int_{\g_1}\frac{(\z-a) d\z}{R(\z)}}{\int_{c_1}\frac{(\z-a) d\z}{R(\z)}}= 
\frac{\int_{\g_1}\frac{ d\z}{R_0(\z)}}{\int_{c_1}\frac{ d\z}{R_0(\z)}}$.
So,
\be
2\la_n=e^{-n\pi   \frac{\int_{c_1}\frac{ d\z}{R_0(\z)}}{\int_{\g_1}\frac{ d\z}{iR_0(\z)}}        +O(1)},
\ee
which coincides with the corresponding expression from \cite{kt12}.

\subsection{Asymptotics of Eigenfunctions}
Let $\k_n$ be the $n$-th approximate eigenvalue. Then $\Theta(W(\k_n)-W_0)=0$. If $\mathbf f_n:= W(\k_n)-W_0$ then,
according to Theorem \ref{generalThetadiv},   $\mathbf f_n=  \sum_{j=1}^{g-1}\mathfrak u(p_j) + \mathcal K$,
where the points $p_j =(z_j, R_j)$, discussed in Proposition \ref{ThetaDiveig}, belong to the cycles $A_{j+1}$ of the Riemann surface. For this reason it makes sense to consider ${\bf f}(\vec p) :=   \sum_{j=1}^{g-1}\mathfrak u(p_j) + \mathcal K$, where $\vec p=(p_1,\dots,p_{g-1})$, as a function on the (universal cover) of the torus $A_2\times \dots A_g$, so that ${\bf f}_n = {\bf f} (\vec p_n)$ for some points $\vec p_n\in  A_2\times \dots A_g$.
\bl
\label{lemmarespsi}
{\bf (1)} For $\Psi(z;\k)$ from Theorem \ref{theoremPsi} we have
\bea\label{res-Psi}
\res{\k=\k_n} \Psi(z;\k)  =
C_0 \le[
\begin{matrix}
\ds \frac{i\pi \Theta\le(\mathfrak u(z) -\mathfrak u(\infty) + {\bf f}(\vec p_n) \ri) h(z)}{\vec \tau_1 \!\!\cdot\!\!\nabla \Theta(
{\bf f}(\vec p_n)) \Theta\le(\mathfrak u(z) -\mathfrak u(\infty)+W_0\ri)} 
&
\ds \frac{i\pi \Theta\le(-\mathfrak u(z) -\mathfrak u(\infty) +{\bf f}(\vec p_n) \ri) h(z)}{\vec \tau_1 \!\!\cdot\!\!\nabla \Theta(
{\bf f}(\vec p_n)) \Theta\le(-\mathfrak u(z) -\mathfrak u(\infty)+W_0\ri)} \\[5ex]
\ds \frac{-i\pi \Theta\le(\mathfrak u(z) +\mathfrak u(\infty) +{\bf f}(\vec p_n)\ri) h(z)}{\vec \tau_1 \!\!\cdot\!\!\nabla \Theta(
{\bf f}(\vec p_n)) \Theta\le(\mathfrak u(z) +\mathfrak u(\infty)+W_0\ri)} 
&
\ds \frac{-i\pi \Theta\le(-\mathfrak u(z) +\mathfrak u(\infty) + {\bf f}(\vec p_n) \ri) h(z)}{\vec \tau_1 \!\!\cdot\!\!\nabla \Theta(
{\bf f}(\vec p_n)) \Theta\le(-\mathfrak u(z) +\mathfrak u(\infty)+W_0\ri)} 
\end{matrix}
\ri].
\eea
${\bf (2)}$  For any $\vec p_n \in A_2\times \dots\times A_g$ the matrix is not identically zero. {\bf(3)} The two rows are proportional to each other for any $\vec p_n \in A_2\times \dots\times A_g$.
\el
\begin{proof}
{\bf (1)} Expression \eqref{res-Psi} follows directly from Theorem \ref{theoremPsi}.

{\bf (2)} Let $\vec p$ be an arbitrary point on  $A_2\times \dots A_g$.
 It was  proven in Lemma \ref{lemmatransversal}, {\bf(1)}, that $\vec \tau_1 \cdot \nabla \Theta({\bf f}(\vec p))\neq 0$.
   Similarly the other Theta functions in the denominators cannot be identically zero functions of $z$ because the degree $g$ divisors $\sum_{j=1}^{g-1} p_j + \infty_{1,2}$ are  non-special as well.  The only expressions that may be identically zero are the Theta functions $\Theta(\pm \mathfrak u(z) \pm \mathfrak u(\infty) + {\bf f}(\vec p_n))$. According to the Riemann theorem  (\cite{FarkasKra}, p. 317)  this happens if and only if $ {\bf f}(\vec p) \pm \mathfrak u(\infty) - \mathcal K = \sum_{j=1}^{g-1}\mathfrak u(p_j)\pm \mathfrak u(\infty)$ is a special divisor of degree $g$. According to the description of special divisors (see after Definition \ref{defspecial}) this happens only if there is a pair of points with the same $z$--coordinate. By inspection we see that this happens if and  only if $p_{g-1}\in A_g$ lies above $z=\infty$ in such a way that $\u(p_{g-1}) \pm \u(\infty)=0$ (i.e. on the secondary sheet in the case of the $+$ sign and on the main sheet in the case of the $-$ sign).
For example, if $p_{g-1} = \infty_1$ (the infinity on the main sheet), then the whole first row of \eqref{res-Psi} vanishes; but in that case the second row is not identically vanishing because $\sum_{j=1}^{g-1}\mathfrak u(p_j)+ \mathfrak u(\infty_2 )$ is a non-special divisor of degree $g$ (and thus the entries of the second row vanish only at $g$ points of the Riemann surface). Similarly, if $p_{g-1} = \infty_2$, the {\em second} row vanishes, but the first one does not. 

{\bf (3)}  We now address the linear dependence of the rows  of \eqref{res-Psi} as a function of $\vec p=\vec p_n\in A_2\times \dots \times A_g$. First of all we see that  ${\bf f}(\vec p)$ changes by integer valued vectors as $\vec p$ winds around the torus $A_2\times \dots A_g$. Since  $\Theta$-function  is (strictly) periodic in $\Z^g$, the right hand side of \eqref{res-Psi} is a well--defined function of $\vec p$ on the torus $A_2\times \dots A_g$. Now, of course, we can assume without loss of generality that both rows are non identically zero (in $z$) (if one of them is identically zero  there is nothing else to prove).
Consider the ratio of $(1,1)$ and $(2,1)$ entries of \eqref{res-Psi}
\be\label{1121}
-\frac{ \Theta\le(\mathfrak u(z) -\mathfrak u(\infty) + {\bf f}(\vec p) \ri) }{\Theta\le(\mathfrak u(z) -\mathfrak u(\infty)+W_0\ri)}
 \frac{\Theta\le(\mathfrak u(z) +\mathfrak u(\infty)+W_0\ri)} 
 { \Theta\le(\mathfrak u(z) +\mathfrak u(\infty) + {\bf f}(\vec p) \ri)}\ .
\ee
Using the periodicity of the $\Theta$-functions \eqref{thetaperiods} and the jump relations for $\mathfrak u(z)$ in Proposition \ref{prop-jumpu}, the reader can verify that the above expression {\em does not} have jumps across the real axis, and therefore defines a rational function. In fact the denominator can vanish at most at $g$ points by Theorem \ref{generalTheta}.
The zeroes of the numerator are at $\infty_{1,2}$ (on the two sheets), at the points $a_{j}{,\ j\in J} = \{1,5,7,9...,2g-1\}$ (see Proposition \ref{ThetaDiveig}) and at the points $\wh p_1,\dots, \wh p_{g-1}$ (recall, $\wh{}$ denotes the sheet exchange);  both these statements follow from the explicit form of $W_0$ in \eqref{W0long}, the definition of  ${\bf f}(\vec p) $ and from Theorem \ref{generalTheta}. The denominator has zeroes at the same points and thus the ratio is constant.
In a similar way, one can verify that  
the ratio of the $(1,2)$ and $(2,2)$ entries of \eqref{res-Psi} is also a constant: 
\be
-\frac{ \Theta\le(-\mathfrak u(z) -\mathfrak u(\infty) + {\bf f}(\vec p)\ri)} {\Theta\le(-\mathfrak u(z) -\mathfrak u(\infty)+W_0\ri)}
 \frac{\Theta\le(-\mathfrak u(z) +\mathfrak u(\infty)+W_0\ri)} 
 { \Theta\le(-\mathfrak u(z) +\mathfrak u(\infty) + {\bf f}(\vec p) \ri)}.
 \label{1122}
\ee
Indeed one verifies that the expression \eqref{1122} has no jumps and thus is a rational function; now both the numerator and denominator vanish at $\infty_{1,2}, p_1,\dots, p_{g-1}$. 
Evaluating, for example, at $z=a_1$ where $\mathfrak u(a_1)=0$ shows that \eqref{1121} and \eqref{1122} are the same constant.
\end{proof}

According to Proposition \ref{exacteigenfunctions},  we can now proceed to the evaluation of the approximation to the eigenfunctions of the operator $\wh K$.

\bth \label{asympteigf}
Let $\k_n$ be the $n$-th approximate eigenvalue and $n$ sufficiently large. Then, uniformly in each closed subset of $I=I_i\cup I_e$ not containing any endpoints $a_l,~l=1,2\dots,2g+2$, we have  the approximation of the eigenfunctions of $\wh K$ given by either of  the expressions
\begin{small}
\begin{equation}
\begin{split}
&\phi_{n,j}(z)  = 
{(-1)^{j}} 2  {\rm e}^{(-1)^{j+1}(d_\infty + \k_n\gg_{\infty}) -\frac {\k_n} 2 }\\
&\times \le\{  \Re \le[
\frac{
\pi C_0 \Theta\le(\mathfrak u_+(z) +(-1)^j \mathfrak u(\infty) + {\bf f}(\vec p_n) \ri) h_+(z)}{
{i}
\vec \tau_1 \!\!\cdot\!\!\nabla \Theta({\bf f}(\vec p_n)) \Theta\le(\mathfrak u_+(z) + (-1)^j\mathfrak u(\infty)+W_0\ri)}  {\rm e}^{-i\Im(d_+(z))-i \k_n\Im (\gg _+(z)) }  \ri]\chi_e \ri.\\
&\hspace{10pt}
\le.
 +i\Im  \le[
\frac{
\pi C_0 \Theta\le(\mathfrak u_+(z) +(-1)^j\mathfrak u(\infty) +{\bf f}(\vec p_n) \ri) h_+(z)}{
{i}
\vec \tau_1 \!\!\cdot\!\!\nabla \Theta({\bf f}(\vec p_n)) \Theta\le(\mathfrak u_+(z) +(-1)^j\mathfrak u(\infty)+W_0\ri)}{\rm e}^{-i\Im(d_+(z))-i \k_n\Im (\gg _+(z))} \ri]\chi_i  + \mathcal O(\k_n^{-1})\ri\},
\label{phin}
\end{split}
\end{equation}
\end{small}
$j=1,2$, where the (independent on $n$) constants $W_0, C_0$ and function $h$ were defined in \eqref{wukappa}, \eqref{defC_0} and \eqref{spinorh}, provided that these expressions  are not identically zero.
In this case, the expressions in the right hand side of \eqref{phin} are  proportional to each other by a nonzero constant in $z$.
If one of the two expressions is identically zero  for a particular value of $n$, then the other expression is not identically zero.
\et
\begin{proof}

Unraveling all the steps that transformed the RHP \ref{RHPGamma} to its final approximation, we have in summary:\\
\begin{itemize}
\item On any closed set in $\C \setminus{I}$
\bea
\Gamma (z;\l) = {\rm e}^{(d_\infty + \k \gg_{\infty})\s_3} \mathcal E \Psi  {\rm e}^{-d \s_3} {\rm e}^{-\k\gg \s_3},
\label{away}
\eea
where $\mathcal E(z)$ is the error matrix, which differs from the identity (uniformly) by an $\mathcal O(\k^{-1})$, $\k\to \infty$, term, and $\Psi$ is the solution to the RHP \ref{modelRHP};
\item The boundary values of $\Gamma$ in the interior/exterior intervals in terms of our approximation are read off from \eqref{422} and \eqref{y-def}:
\begin{small}
\bea
\Gamma_\pm (z;\l) = {\rm e}^{(d_\infty + \k \gg_{\infty})\s_3} \mathcal E \Psi_\pm  {\rm e}^{-d_\pm \s_3} \le[
\begin{matrix} 
1 &\ds  \pm \frac { {\rm e}^{-\k(2\gg_\pm +1)}}{iw} \chi_{_i}\\
\ds \pm i w {\rm e}^{\k(2\gg_\pm -1)}  \chi_{_e} &1
\end{matrix}
\ri] {\rm e}^{-\k\gg_\pm  \s_3},\, z\in I.
\label{733}
\eea
\end{small}
\end{itemize}

Using formula \eqref{phi_j}  (with $\l= e^{-\k}$), we can approximate $\varphi_j(z;\l)$. Referring now to \eqref{phi_j}, we point out that $A_j(z;\l)$ is analytic on $I_e$, and $B_j(z;\l)$ is analytic on $I_i$.
Therefore we can use either the $+$ or $-$ boundary values of $\Psi$ when we approximate $\varphi_j(z;\l)$ in terms of $\Psi$. Using, for example, the $+$ boundary value, we find that for $z\in I$ one has:
\begin{equation}
\begin{split}
&\varphi_j(z;\l) = 
\frac{{\rm e}^{(-1)^{j+1}(d_\infty + \k\gg_{\infty}) } }{\sqrt{w(z)}}\le[\Psi_{j1+} {\rm e}^{-d_+-\k\gg _+}  + 
i w {\rm e}^{\k \gg_+ -\k +d_+}\Psi_{j2+}  + \mathcal O(\k^{-1}) \ri]\chi_e
\\
&\quad + i \sqrt{w(z)}{\rm e}^{(-1)^{j+1}(d_\infty + \k\gg_{\infty})}\le[
\frac{\Psi_{j1+} }{iw}{\rm e}^{-\k \gg_+ -\k -d_+ } + \Psi_{j 2+}  {\rm e}^{ \k \gg_+ + d_+}+ \mathcal O(\k^{-1}) 
\ri]\chi_i\\
=&
{\rm e}^{(-1)^{j+1}(d_\infty + \k\gg_{\infty}) -\frac \k 2 }
 \le[
 \Psi_{j1+}  {\rm e}^{-\frac {d_+ -d_-}2-\k\frac{\gg _+  - \gg_-}2 } 
 + {\rm e}^{\k \frac{\gg_+-\gg_-}2 +\frac{d_+-d_-}2 }\Psi_{j 1-}+ \mathcal O(\k^{-1})  \ri]\chi_e
\\
  + &{{\rm e}^{(-1)^{j+1}(d_\infty + \k\gg_{\infty}) - \frac \k 2}} \le[
 \Psi_{j 1+} {\rm e}^{-\k \frac{\gg_+-\gg_-}2  -\frac {d_+ -d_-}2 } -  \Psi_{j1-}  {\rm e}^{ \k\frac { \gg_+ -\gg_-}2  +\frac{ d_+-d_-}2}
+ \mathcal O(\k^{-1}) \ri]\chi_i.
\label{739}
\end{split}
\end{equation}
We have used the jump conditions in RHP \ref{modelRHP} in \eqref{739} and the properties \eqref{geqm} of $\gg$. 
In order to compute the eigenfunctions of $\wh K$ we need to take the residue of \eqref{739} according to Proposition \ref{exacteigenfunctions} at the eigenvalues $\l_n = {\rm e}^{-\k_n^{exact}}$ with respect to $ \k$ (since $\frac {d\l}\l = -d\k$) so that we need to use \eqref{res-Psi}. Given that  the approximation \eqref{739} is uniform in a strip around the positive real $\k$--axis, the process of taking residues does not affect the error term. 
We also recall that $\Psi_{j1-} =\ov {\Psi_{j1+}}$, see Proposition \ref{symmetry},  and the same applies to the residues at $\k_n\in \R$. Moreover, 
it follows from  \eqref{defg} and   \eqref{dz}, respectively, that
$\gg_+-\gg_-=2i\Im \gg_+$  
and $d_+-d_-=2i\Im d_+$ on the main arcs. Thus we obtain \eqref{phin}.
Note also that, according to assertion {\bf (2)} of Lemma \ref{lemmarespsi},  at least one of $\phi_{n,j}$, $j=1,2$, is not identically zero. Moreover, assertion {\bf (3)} of Lemma \ref{lemmarespsi} implies that  if both  $\phi_{n,j}$, $j=1,2$ are nonzero functions, then they are multiple of each other.
\end{proof}
In order to provide the approximation of the {\em normalized} eigenfunctions for $\wh K$ we need to compute the norms of the approximate eigenfunctions from Theorem \ref{asympteigf}.

\bl
\label{technical}
{\bf(1)} The following identities hold for $j=1,2$:
\bea
\label{psipsi}
  N_{j}(\vec p_n)&&:= -\frac i{\pi^2}\oint_{B_1}\res{\k=\k_n} \!\!\! \Psi_{j1}(z;\k)\!\!\! \res{\k=\k_n} \!\!\! \Psi_{j2}(z;\k)d z
=\cr
&& =\frac {\Theta({\mathbf f}(\vec p_n)+(-1)^j2\mathfrak u(\infty))} {\Theta(W_0+(-1)^j 2\mathfrak u(\infty) )}\frac{ [\mathbb A^{-1} \nabla \Theta(W_0)]_g }
{i \vec \tau_1\!\!\cdot \!\! \nabla \Theta({\mathbf f}(\vec p_n))}.
\eea
{\bf (2)} The function $N_j(\vec p)$ is a real analytic function of $\vec p\in A_2\times \dots \times A_g$. It vanishes to second order at $p_{g-1} =\infty_{l}$, where $\infty_l$ is the point at $z=\infty$ on the sheet $l=1,2$, and has no other zeroes.
\el
\begin{proof}
{\bf (1)} 
First notice that the ratios in \eqref{psipsi} are
well-defined because $\Theta(W_0+(-1)^j2\mathfrak u(\infty) )\neq 0$ 
(third bullet point of Lemma \ref{lemma1}) and ${\nabla}\Theta(\mathbf f(\vec p_n))\neq 0$ ( Lemma \ref{lemmatransversal}).
Using the expressions for $\Psi_{ij}$ from \eqref{res-Psi}, we obtain
\bea\label{839}
&& \ \ \ \ \ \ - i\oint_{B_1}\res{\k=\k_n}\!\!\! \Psi_{j1}(z;\k)\!\!\! \res{\k=\k_n} \!\!\! \Psi_{j2}(z;\k)d z =\frac{ -iC_0^2 \pi^2}{ (\vec \tau_1\!\!\cdot \!\! \nabla \Theta(\mathbf f(\vec p_n)))^2}\times \\
&& \times \oint_{B_1} 
\!\!\!\! \frac{\Theta(\mathfrak u(z) +(-1)^j \mathfrak u(\infty) + \mathbf f(\vec p_n))\Theta(-\mathfrak u(z) +(-1)^j \mathfrak u(\infty) + \mathbf f (\vec p_n)) h^2(z) d z }
{\Theta(\mathfrak u(z) +(-1)^j \mathfrak u(\infty) + W_0)\Theta(-\mathfrak u(z) +(-1)^j \mathfrak u(\infty) +W_0)}.
\nonumber
\eea
Now the integrand is a differential with no poles (the finite zeroes in the denominator cancel with those of $h^2$ by construction of $W_0$, and the zeroes at $\infty$ cancel with those in the numerator). 
The numerator also vanishes at the points $z= p_j^{(n)}$ of the components of  ${\vec p_n} = (p_1^{(n)} , \dots, p_{g-1}^{(n)})$ and their reflections on the opposite sheet; in other words it is a holomorphic differential of the form $\propto \frac{\prod_{j=1}^{g-1}(z-z_j^{(n)})}{R(z)}dz$. Here $z_j^{(n)}$ are the $z$--coordinates of the points $p_j^{(n)}$. 
As such, the differential must be proportional to $\vec \omega(z) \cdot \nabla \Theta(\mathbf f(\vec p_n))dz$ (Lemma \ref{lemmafay}). To decide the proportionality constant it suffices to evaluate the two expressions at $\infty$, so one finds  
\beas
\frac{\Theta(\mathfrak u(z) +(-1)^j \mathfrak u(\infty) + \mathbf f_n)\Theta(-\mathfrak u(z)+( -)^j \mathfrak u(\infty) + \mathbf f_n) h^2(z) }
{\Theta(\mathfrak u(z) +(-1)^j \mathfrak u(\infty) + W_0)\Theta(-\mathfrak u(z) +(-1)^j \mathfrak u(\infty) +W_0)} = \\
=
\frac {\Theta(\mathbf f_n+(-1)^j2\mathfrak u(\infty))} {C_0\Theta(W_0+(-1)^j2\mathfrak u(\infty) )} \vec \omega(z) \cdot \nabla \Theta(\mathbf f_n), 
\eeas
and the right-hand side of \eqref{839} becomes
\bea
 \frac {-i\Theta(\mathbf f_n+(-1)^j2\mathfrak u(\infty))} {\Theta(W_0+(-1)^j2\mathfrak u(\infty) )}\frac{ C_0 \pi^2}{ (\vec \tau_1\!\!\cdot \!\! \nabla \Theta(\mathbf f_n))^2} \oint_{B_1} 
 \vec \omega(z) \cdot \nabla \Theta(\mathbf f_n) 
= \cr
=  \frac {\Theta(\mathbf f_n+(-1)^j2\mathfrak u(\infty))} {\Theta(W_0+(-1)^j2\mathfrak u(\infty) )}\frac{ C_0 \pi^2}
{i \vec \tau_1\!\!\cdot \!\! \nabla \Theta(\mathbf f_n)}. 
\label{841}
 \eea
Thus, recalling formula \eqref{defC_0} for $C_0$  completes the proof.
 {\bf (2)} 
It can be shown using the periodicity of $\Theta$ and the Schwarz symmetry that $N_j(\vec p)$ is real--valued 
when $\vec p\in A_2\times \dots \times A_g$.
According to Corollary \ref{generalThetadiv}, the term $\Theta({\bf f}(\vec p) +(-1)^j 2\u(\infty))$ in \eqref{psipsi}  vanishes 
on $ A_2 \times \dots A_g$ 
if {and only if}  the point $p_{g-1}\in A_g$ coincides with $\infty_j$, 
see Figure \ref{homology}. To prove that the vanishing is  of second order, consider the case $j=1$ for definiteness:
\bea\label{fvec_p}
{\bf f}(\vec p) - 2\u(\infty) = \u(p_{g-1}) + \sum_{j=1}^{g-2} \u(p_j) - 2\u(\infty)  -\K = \u(p_{g-1}) - \le(\sum_{j=1}^{g-2} \u(\wh p_j) + 2\u(\infty)\ri)  +\K,
\eea
where the equalities are in the Jacobian and we have used the fact  that $\K = -\K$ in the Jacobian
Then, by Theorem \ref{generalTheta},   $\Theta({{\bf f}(\vec p)} - 2\u(\infty))$
has a second order zero at $p_{g-1}=\infty$ on the main sheet.
\end{proof}

\bc
\label{doublecover}
The functions $N_j(\vec p)$ have constant sign on the torus $A_2\times \dots A_g$.
 The function $\sqrt{N_j(\vec p)}$ can be defined analytically on the double cover of  $A_2\times \dots A_g$.
\ec
\begin{proof}
By Lemma \ref{technical}, {(2)}, $N_j(\vec p)$ are real and vanish quadratically at $p_{g-1} =\infty_j$ and thus the square root can be analytically continued across the hypersurface $p_{g-1}  = \infty_j$. Since this is the only sign change on the cycle $p_{g-1} \in A_g$, we conclude that $\sqrt{N_j(\vec p)}$ gains the opposite sign under the analytic continuation in the variable $p_{g-1}\in A_g$. Thus, it is well-defined on the  double cover of the $A_g$ component.
\end{proof}
\br
Although it is nontrivial  to prove it directly using  properties of the Theta functions, it will follow from the  
Proposition \ref{asymptnorms} below  that $N_j(\vec p)$ is actually {\em positive} since it appears in \eqref{748} as an approximation of a positive quantity (norm).
\er

 \bp
 \label{asymptnorms}
 {\bf (1)}
The norms in $L^2({I}) $ of the eigenfunctions $\phi_{n,j}$ appearing in Theorem \ref{asympteigf}, are
 \be
 \label{748}
\|\phi_{n,j}\|^2 =     2{\rm e}^{(-1)^{j+1}2(d_\infty + \k_{n}\gg_{\infty}) - \k_n} \le(\pi^2 N_j(\vec p_n)
 + \mathcal O(\k_n^{-1})\ri),~~~j=1,2.
 \ee
{\bf (2)} There exists $\nu>0$ such that for all $n\in\N$
\be\label{748a}
\max_{j=1,2}|N_j(\vec p_n)| >\nu.
\ee
 
\ep
\begin{proof}
 {\bf (1)} 
We will, for concreteness, consider $j=1$ (i.e., the first row in \eqref{res-Psi}).
First of all, recall that by Proposition \ref{exacteigenfunctions}, $\phi_{n,1}(z)$ is related to $\Gamma(z;\l )$. Thus its squared norm in $L^2(I_e \sqcup I_i)$ is the sum of the two pieces on $I_i$ and $I_i$. 
Since the eigenfunctions are the residues of $\Gamma(z;\l)$ as per Proposition \ref{exacteigenfunctions}, we need to compute the expressions below (where $J(H):= H_+-H_-$):
\bea
\begin{split}
\int_{I_e} |\phi_{n,1}|^2  d z  &= 
\int_{I_e} \le(\res{\l=\l_n} \Gamma_{11}(z;\l)\frac {1}\l\ri)^2\frac{dz}{ w(z)} 
=\cr
&=   \int_{I_e}\le(  \res{\l=\l_n}\Gamma_{11}(z;\l) \frac {1}\l \ri) \le(\res{\l=\l_n} i\l J(\Gamma_{12}(z;\l))\frac {1}\l\ri) dz=\\
& =i   \le(\oint_{B_0} + \oint_{B_g}\ri) 
\res{\l=\l_n}\le(\Gamma_{11}(z;\l) \frac {1}{\l} \ri)\res{\l=\l_n} \le( \Gamma_{12}(z;\l) \ri) d z;
\end{split}
\label{744} 
\eea
\bea
\begin{split}
\int_{I_i} |\phi_{n,1}|^2  d z  &=\int_{I_i}\le(\res{\l=\l_n}  \Gamma_{12}(z;\l) \frac {1}\l\ri)^2 \!\!\!\! {w(z)} dz=\\
&= -i  \int_{I_i} \le(\res{\l=\l_n} \l J(\Gamma_{11}(z;\l))\frac {1}\l\ri)  \le(\res{\l=\l_n}  \Gamma_{12}(z;\l) \frac {1}\l\ri) dz \\
& 
= -i \oint_{B_1} \le(\res{\l=\l_n} \Gamma_{11}(z;\l)\ri) \le(\res{\l=\l_n}  \Gamma_{12}(z;\l)\frac {1}\l\ri) d z.
\end{split}
\label{745} 
\eea
Here we have used Proposition \ref{exacteigenfunctions} (with $j=1$) and Remark \ref{SchwartzGamma}, which  imply that $\Gamma_{11}$ is real on $I_e$, and $\Gamma_{12}$ is real on $I_i$.
 Both residues of the matrix entries of $\Gamma$ in the integrands above  behave as $\mathcal O(z^{-1})$ at infinity: 
this is always true for the element $\Gamma_{12}$, and the residue of  $\Gamma_{11}(z;\l)$ at $\l_n$ is proportional to $h_n(z)$ of \eqref{svd-def}, which has the mentioned behavior. Then
 \be
 \int_{I_e} |\phi_n|^2  d z   =  \int_{I_i} |\phi_n|^2  d z\label{halfnorms},
 \ee
as can be easily seen by deforming the corresponding contours of integration, see Figure \ref{homology}.

Since the estimate of $\mathcal E(z)=\mathcal E(z;\k)$  in \eqref{away} is uniform as long as $\k$ stays away from  $\k_n$, evaluation of the residue (which is a contour integral over a small fixed circle around $\k_n$) is affected by the same error term. Plugging \eqref{away} into the  contour integral in \eqref{745} gives
\bea
 && \ \ \ -i\oint_{B_1}\bigg(\res{\l=\l_n} \Gamma_{11}(z;\l)  \bigg)\le(\res{\l=\l_n}  \Gamma_{12}(z) \frac {1}\l\ri)d z =\\
 &&= -i {\rm e}^{2d_\infty +2 \k_n \gg_\infty -\k_n}
\le( \oint_{B_1} 
\bigg(\res{\k=\k_n} \!\!\! \Psi_{11}(z;\k) \res{\k=\k_n} \Psi_{12} (z;\k)\bigg) dz + \mathcal O(\k_n^{-1})\ri).
\nonumber
\label{838}
\eea
The integral  of the leading contribution  can be computed explicitly  using Lemma \ref{technical}.
 Note that the factor of $2$ in \eqref{748} follows from \eqref{halfnorms}.
The final statement follows from the fact that $C_0=[\mathbb A^{-1} \nabla\Theta]_g\neq 0$, see Theorem \ref{theoremM}.

 {\bf (2)} 
 The two functions  $N_j (\vec p)$ are defined  on the compact torus $A_2\times...\times  A_g$  by Lemma \ref{technical}, {\bf (2)}. They vanish at the two hypersurfaces $p_{g-1}= \infty_{j}$ that have no intersection. Thus a simple compactness argument  implies that 
 $$ \min_{\vec p\in A_2\times...\times  A_g} \max_{j=1,2} |N_j(\vec p)| = \nu >0.$$ 
\end{proof}

\bc\label{cor-Ups12}
Functions 
\bea\label{Upsns}
\Upsilon_j(z;\vec p)=
\sqrt{ \frac {\Theta(W_0\!+\!(-1)^j2\mathfrak u(\infty) )}{\Theta(\mathbf f(\vec p)\!+\! (-1)^j2\mathfrak u(\infty))}
 \frac {[\mathbb A^{-1} \nabla \Theta(W_0)]_g }{i\vec \tau_1\!\!\cdot \!\! \nabla \Theta(\mathbf f(\vec p)) }}
\frac{\Theta\le(\mathfrak u(z)\! +\!\!(-1)^j\!\mathfrak u(\infty) + \mathbf f(\vec p) \ri) h(z)}
{ \Theta\le(\mathfrak u(z) \!+\!(-1)^j \mathfrak u(\infty)+W_0\ri)},
\eea
$j=1,2$, are analytic in $z$ on $Z_0$ and in $\vec p$ on the double covering of the torus  $ A_2 \times \dots A_g$.
Here $Z_0 := \bar\C \setminus [a_1,a_{2g+2}]$, but the boundary points on both sides of the interval $ (a_1,a_{2g+2})$ are also included in $Z_0$. Moreover, they coincide on
$Z_0 \times A_2 \times \dots A_g$ modulo factor $(-1)$. 
\ec
\begin{proof}
 According to \eqref{psipsi}, we have
\be\label{Ups}
\Upsilon_j(z;\vec p) =
\frac{\Theta\le(\mathfrak u(z)\! +\!\!(-1)^j\!\mathfrak u(\infty) + {\mathbf f}(\vec p) \ri) h(z)}
{\sqrt{N_j(\vec p)}  \Theta\le(\mathfrak u(z) \!+\!(-1)^j \mathfrak u(\infty)+W_0\ri)},\ \ j=1,2.
\ee
Then the analyticity of $\Upsilon_j(z;\vec p)$ everywhere on $Z_0 \times A_2 \times \dots A_g$ except possibly zeroes
of the denominator 
follows from Corollary \ref{doublecover}. According to  Lemma \ref{lemma1},
$\Theta\le(\mathfrak u(z) \!+\!(-1)^j \mathfrak u(\infty)+W_0\ri)\neq 0$ on $Z_0\setminus \{\infty\}$.
Note that  at $z=\infty$ the ratio  $\frac{h(z)}{\Theta\le(\mathfrak u(z) \!+\!(-1)^j \mathfrak u(\infty)+W_0\ri)}$ has a removable singularity. Thus, it remains to consider a simple zero of  $\sqrt{N_j(\vec p)}$ at $p_{g-1}=\infty_j$, $j=1,2$,
see Corollary \ref{doublecover}.

The function $\Theta(\u(z) +(-1)^j \u(\infty) + {\bf f}(\vec p))$  vanishes identically in $z$ when  $p_{g-1}=\infty_j$.
For example,  for $j=1$, equation \eqref{fvec_p} implies  
\be
\Theta(\u(z) - \u(\infty) + {\bf f}(\vec p)) = 
\Theta\le(   \u(p_{g-1}) - \le(\sum_{k=1}^{g-2} \u(\wh p_{k}) + \u(\infty) + \u(\wh z)  \ri)  +\K  \ri)=0
\ee
at $p_{g-1}=\infty_1$ ($\infty$ on the main sheet) and this zero is simple according to Theorem \ref{generalTheta}. 
Since $\sqrt{N_1(\vec p)}$ also vanishes linearly, the singularity at   $p_{g-1}=\infty_1$ is removable and we can analytically continue  $\Upsilon_{1}(z;\vec p)$ across the hypersurface $p_{g-1} = \infty_1$. The resulting function lives naturally on the double cover by virtue of  Corollary \ref{doublecover}. Similar arguments work for $\Upsilon_{2}(z;\vec p)$.

We now show that on their common domain of analyticity (i.e., when $p_{g-1}\neq \infty_{1,2}$) $\Upsilon_{1,2}$ coincide  up to a sign and thus, by analytic continuation, they are proportional to each other on the whole double covering of $A_2 \times \dots A_g$.
According to Lemmas \ref{lemmarespsi}, \ref{technical},
\be\label{expr-Ups}
\frac{{(-1)^j}}{\sqrt{N_j(\vec p_n)} }
\frac{\Theta\le(\mathfrak u(z)\! +\!\!(-1)^j\!\mathfrak u(\infty) +{\mathbf f}(\vec p_n) \ri) h(z)}
{ \Theta\le(\mathfrak u(z) \!+\!(-1)^j \mathfrak u(\infty)+W_0\ri)}
 =\frac{\pi \res{\k=\k_n} \!\!\! \Psi_{j1}(z;\k)}
{\sqrt{- i\oint_{B_1}\res{\k=\k_n} \!\!\! \Psi_{j1}(z;\k)
\!\!\! \res{\k=\k_n} \!\!\! \Psi_{j2}(z;\k)d z}}.
\ee
But the rows  of \eqref{res-Psi} are proportional, see  Lemma \ref{lemmarespsi}, {\bf (3)}, so
the right hand side of \eqref{expr-Ups} does not depend on $j$ (modulo sign).
\end{proof}

\br \label{rem-otherUps}
The statement of Corollary \ref{cor-Ups12} remains valid if the last fraction in \eqref{Upsns}
is replaced by 
\be
\frac{\Theta\le(-\mathfrak u(z)\! +\!\!(-1)^j\!\mathfrak u(\infty) + \mathbf f(\vec p) \ri) h(z)}
{ \Theta\le(-\mathfrak u(z) \!+\!(-1)^j \mathfrak u(\infty)+W_0\ri)}.
\nonumber
\ee
\er

In view of Corollary \ref{cor-Ups12}, we denote by $\Upsilon(z;\vec p)$ a function on $Z_0 \times A_2 \times \dots A_g$
that coincides (modulo sign) with both  $\Upsilon_{1,2}(z;\vec p)$. The normalized singular functions  $\wh f_n$, $\wh h_n$
from \eqref{svd-def2} are expressed through $\Upsilon(z;\vec p_n)$ in Theorem \ref{cor-first} below.
Since these normalized  (real) singular functions are defined up to a sign, the ambiguity in the sign of 
$\Upsilon(z;\vec p)$ will not affect our result. Moreover, for a given $n\in\N$ we can choose either
of $\Upsilon_{1,2}(z;\vec p_n)$ for $\Upsilon(z;\vec p_n)$, as convenient.  We remind that
 $\Upsilon_{1,2}(z;\vec p_n)$ is given by \eqref{Upsns}, where $\mathbf f(\vec p_n)=W(\k_n)-W_0$.

\bth \label{cor-first}
{\bf (1)} The  singular functions $\wh f_n(z)$ and $\wh h_n(z)$ of the 
system in \eqref{svd-def2} {\em normalized} in $L^2(I_i)$ and $L^2(I_e)$, respectively,  are asymptotically given by
\bea
\begin{split}
\wh f_n(z) =
{i} \Im  \le[ 2\Upsilon_+(z;\vec p_n){\rm e}^{-i \k_n \Im (\gg_+(z))  -i\Im (d_+(z)) } \ri] + 
\mathcal O({\k}_n^{-1}), \ z\in I_i,\cr
\wh  h_n(z) = 
\Re  \le[ 2\Upsilon_+ (z;\vec p_n) {\rm e}^{-i \k_n \Im (\gg_+(z))  -i\Im (d_+(z)) } \ri]  + 
\mathcal O({\k}_n^{-1}),\ z\in I_e,
\end{split}
\label{843}
\eea
where the approximation is uniform in any compact subset of the interior of $I_i,  I_e$, respectively.  
The subscript `$+$' in $\Upsilon_+ (z;\vec p_n)$ denotes that $z$ is taken on the upper shore of $\C\setminus [a_1,a_{2g+2}]$.
\et
\begin{proof}
Equations \eqref{843} follow from
Proposition \ref{asymptnorms}, Theorem \ref{asympteigf}, Corollary \ref{cor-Ups12},  Theorem \ref{theo-ass} and \eqref{svd-K}. 
The error estimate in \eqref{843}
follows from  Proposition \ref{asymptnorms}, {\bf (2)}.  
\end{proof}

\br\label{signchange2}
The  shape of  $\wh h_n, \wh f_n$ 
is that of rapid oscillations driven by ${\rm e}^{-{i\k_n}\Im \gg_+(z)}$ and modulated by  the prefactors,
 see Figure \ref{asympteignum} for numerical simulations of some particular singular functions.
The phase of  $\exp (-i\k_n \Im g_+)$ has a total increment of $ \k_n \Im (g_+(a_{2g}) - g_+(a_3))$ {when $z$ changes from $a_3$ to $a_{2g}$}, which, by \eqref{defg} (see also Figure \ref{homology}), is $\k_n \oint_{B_1} \omega_1 = \k_n \tau_{11}$. Given 
the estimate  $\k_n \sim \frac {n \pi}{\tau_{11}}$ from \eqref{asskap-exact}, we see that the total increment of the argument is $\Delta \arg = n\pi$, which is perfectly consistent with the expected $n$ sign changes discussed in Remark \ref{signchange}.
We point out that the prefactor has  $g$ zeroes: at $\infty$ and at the $g-1$ points $p_j=(z_j, R_j)$ described in Theorem \ref{ThetaDiveig} (all of which are in the gaps between the intervals (main arcs)). The prefactor is not a fast oscillating function, since its $n$ dependence is only via the shift ${\bf f}_n$, which is constant in $z$.
\er
The following corollary for the singular functions  $ f_n=\sqrt{w}\wh f,~ h_n = \sqrt{w}\wh h_n$  is an immediate consequence of Theorem  \ref{cor-first} and \eqref{svd-def2}.
\bc \label{cor-sing-func}
 The {singular functions} $f_n(z)$ and $h_n(z)$  of the system \eqref{svd-def} {\em normalized} in $L^2(I_i,\frac 1{w(z)})$ and $L^2(I_e,\frac 1{w(z)})$, respectively,  are asymptotically given by
\bea
\begin{split}
 f_n(z) =
\sqrt{w(z)} \Im  \le[ 2\Upsilon_+(z;\vec p_n){\rm e}^{-i \k_n \Im (\gg_+(z))  -i\Im (d_+(z)) } \ri] + 
\mathcal O({\k}_n^{-1}), \  \ z\in I_i,\cr
  h_n(z) = 
\sqrt{w(z)}\Re  \le[ 2\Upsilon_+ (z;\vec p_n) {\rm e}^{-i \k_n \Im (\gg_+(z))  -i\Im (d_+(z)) } \ri]  + 
\mathcal O({\k}_n^{-1}),\  \ z\in I_e,
\end{split}\label{844}
\eea
where the approximation is uniform in any compact subset of the interior of $I_i,  I_e$, respectively.
\ec

\appendix
\section{Some basic facts about Theta functions and divisors}
\label{thetaapp}
The reference for all the following theorems are  \cite{FarkasKra} or, occasionally, \cite {Faybook}.
We quote here certain results about general Riemann surfaces of genus $g\in \N$.

The {\bf Riemann Theta function} associated to a symmetric matrix $\tau$ with strictly positive imaginary part  
is the function of the vector argument $\vec z\in\C^g$ given by
\be
\Theta(\vec z,\tau):= \sum_{\vec n\in \Z^g} \exp\bigg(i\pi   \vec n^t \cdot \tau \cdot \vec n +2i\pi \vec n^t \vec z\bigg).
\ee
Often the dependence on $\tau$ is  omitted from the notation.

\bp
\label{thetaproperties}
For any $\lambda, \mu \in \Z^g$, the Theta function has the following properties:
\bea
&\& \Theta(\vec z,\tau)  = \Theta(-\vec z,\tau);\\
&\& \Theta(\vec z + \mu + \tau\lambda, \tau) = \exp \bigg( -2i\pi \lambda^t\vec z - i\pi  \lambda^t \tau \lambda \bigg)
\Theta(\vec z,\tau).\label{thetaperiods}
\eea
\ep

We shall denote by $\Lambda_\tau = \Z^g + \tau \Z^g\subset \C^g$ the {\em lattice of periods}. 
The {\bf Jacobian}  $\mathbb J_\tau$ is the quotient $\mathbb J_\tau = \C^g\mod \Lambda_\tau$. 
It is a compact torus of real dimension $2g$ on account that the imaginary part of $\tau$ is a positive 
definite matrix (Riemann's theorem).

The general definition of the {\it vector of Riemann constants} $\mathcal K$ can be found in \cite{FarkasKra}.
For the case of a hyperelliptic Riemann surface (our case) the  following proposition
can be considered as the definition 
of $\mathcal K$. 
\bp[\cite{FarkasKra}, p. 324]\label{prop-K}
Let $a_1$ be a base-point of the Abel map $\mathfrak u(z)$  (see \eqref{Abelmap})  on the hyperelliptic Riemann surface $\Rscr$ 
of $\sqrt{\prod_{j=1}^{2g+2} (z-a_j)}$. Then the vector of Riemann constants is 
\be
\mathcal K = \sum_{j=1}^g \mathfrak u(a_{2j+1}).
\ee
\ep
\bth[\cite{FarkasKra}, p. 308]
\label{generalTheta}
Let ${\bf  f}\in \C^g$ be arbitrary, and denote by $\mathfrak u(p)$ the Abel map (extended to the whole Riemann surface).
The (multi-valued) function $\Theta(\mathfrak u(z) - {\bf  f})$ on the Riemann surface either vanishes identically or 
vanishes at $g$ points ${p}_1,\dots, {p}_g$ (counted with multiplicity).
In the latter case we have 
\be
{\bf  f} =\sum_{j=1}^{g} \mathfrak u(p_j) + \mathcal K.
\ee
\et

\br
Description of the vectors ${\bf f}$  that lead to identically vanishing $\Theta(\mathfrak u(z) - {\bf  f})$
is more involved and will not be discussed here.
\er
An immediate consequence of Theorem \ref{generalTheta} is the following statement.
\bc
\label{generalThetadiv}
The function $\Theta$ vanishes at ${\bf  e}\in \mathbb J_\tau$ if and only if  there exist $g-1$ points $p_1,\dots, p_{g-1}$ on the Riemann surface such that
\be\label{vece}
{\bf e} =\sum_{j=1}^{g-1} \mathfrak u(p_j) + \mathcal K.
\ee
\ec

\bd\label{def-thetadiv}
The {\bf Theta divisor} is the locus ${\bf  e}\in\mathbb J_\tau$ such that
$\Theta({\bf e})=0$. It will be denoted by the symbol $(\Theta)$.
\ed

\subsection {Divisors}
On a manifold the name ``divisor'' refers to a formal union of subsets of co-dimension one. On a Riemann surface of genus $g$ a divisor is a collection of points (counted with multiplicity). We are going to consider here only {\em positive} divisors, namely, with positive multiplicities. 
The following facts and definitions  are standard and can be found, for example, in \cite{FarkasKra}.

\bd
\label{defspecial}
A (positive) divisor of degree $k\leq g$ is  called {\em special} if the vector space of meromorphic functions with poles at the points of order not exceeding the given multiplicities has dimension strictly greater than $1$. (Note that the constant function is always in this space).
\ed
As the definition suggests, generic divisors of degree $\leq g$ do not admit other than the constant function in the above-mentioned vector space.
The other fact that we have used is that a divisor $\mathcal D = p_1 + \dots +p_k$ ($k\leq g$)  on a {\em hyperelliptic} Riemann 
surface $R^2 = \prod_{j=1}^{2g+2} (z-a_j)$ is special if and only if at least one pair of points is of the 
form $(z, \pm R)$ (i.e. the points are on the two sheets and with the same $z$ value).
\subsection{Derivatives of \texorpdfstring{$\Theta(u(z))$}{Tu}}
\bl[Fay]
\label{lemmafay}
{\bf (1)} Let $\mathbf f = \sum_{j=1}^{g-1} \mathfrak u(p_j) + \mathcal K$, with the divisor consisting of the points $p_j = (z_j, R_j)$ being 
non-special, see Definition \ref{defspecial}.
Then the unique differential (up to a multiplicative constant) that vanishes at  {all the} points of $p_j$  is given by
\be
\vec \omega^t(z) \cdot \nabla \Theta(\mathbf f)dz = \frac {d}{d z'} \Theta(\mathfrak u(z)-\mathfrak u(z') -\mathbf f)\bigg|_{z'=z}dz  
=\omega_{\mathcal D}(z),
\label{B1}
\ee
where the vector $\vec \o(z) dz$ of normalized holomorphic differentials is defined in \eqref{normalization}. 
In the case of a hyperelliptic curve with the Abel map as in Proposition \ref{prop-K}, the remaining $g-1$ zeroes are $\wh p_j$, where the hat denotes the exchange of sheets.
{\bf (2)} In particular, we have 
\be
\vec \omega^t(z) \cdot \nabla \Theta(W_0)dz = 
\frac {d}{d z'} \Theta(\mathfrak u(z)-\mathfrak u(z') -W_0)\bigg|_{z'=z} dz = C_0 h^2(z)dz,
\label{B2}
\ee
where $W_0$, $C_0$, and $h(z)$, are given by \eqref{W0long}, \eqref{defC_0},  and \eqref{spinorh}, respectively.
\el
\begin{proof}
{\bf (1)} 
Consider the function $F(z,z'):= \Theta(\mathfrak u(z)-\mathfrak u(z') -\mathbf f)$.  We know that, as a function of $z$, $F$ vanishes at $z=z'$ and at each $p_j$, $j=1,\dots,g-1$ (cf. Theorem \ref{generalTheta}). By the chain rule (and using the parity of $\Theta$),
\bea
\frac {d}{d z'} \Theta(\mathfrak u(z)-\mathfrak u(z') -\mathbf f)\bigg|_{z'=z}dz  = 
\sum_{j=1}^g\omega_j(z) \nabla_j \Theta(\mathbf f)dz=\omega_{\mathcal D}(z),
\eea
and thus it is a first kind differential of the form $P_{g-1}(z)dz/R(z)$, where $P_{g-1}$ is some polynomial of degree $g-1$. 
According to Theorem \ref{generalTheta}, the differential $\omega_{\mathcal D}(z)$ is not a zero differential 
since  $\mathbf f  = \sum_{j=1}^{g-1} \mathfrak u(p_j) +\mathcal K$ corresponds to a
non-special divisor (Definition \ref{defspecial}) of degree $g-1$ (see also \cite{Faybook}, page 13).
Thus, by Riemann's vanishing theorem (\cite{FarkasKra}, page 317), at least one partial  derivative of $\Theta$ is nonzero.
Since $F(p_j,z)\equiv 0$ (as a function of $z$),  then $\pa_z F(p_j, z)\big|_{z=z_j}=0$. Thus $P_{g-1}(z_j)=0$ and, 
since there are $g-1$ points, this fixes $P_{g-1}$ up to a nonzero constant. Finally, it is clear that the $2g-2$ zeroes of this differential are symmetric under the exchange of sheets. \\
{\bf (2)} Applying the previous assertion to $W_0 =\sum_{j\in J} \mathfrak u(a_j) + \mathcal K$ we see that the expression must be proportional to $[h(z)]^2$ that was defined in \eqref{spinorh}. Indeed, $[h(z)]^2 d z$ is a differential with  $g-1$  zeroes at branchpoints $a_j,\ j\in J$. Hence, by the first part of the proposition, all of the zeroes are of multiplicity two since they are invariant under the exchange of sheets.
The proportionality constant is computed by inspecting the asymptotic behavior as $z\to \infty$ on the first sheet: 
by the definition of first kind differentials \eqref{1stkind} we have  
\be
\frac {d}{d z'} \Theta(\mathfrak u(z)-\mathfrak u(z') -W_0 )\bigg|_{z'=z}  =\frac{[1,z,\dots, z^{g-1}] }{R(z)}\mathbb A^{-1} \cdot \nabla\Theta(W_0). \label{A9}
\ee
The leading behavior  of \eqref{A9} at $z=\infty$ only comes from the term multiplying the $z^{g-1}$ and thus 
\be
\frac {d}{d z'} \Theta(\mathfrak u(z)-\mathfrak u(z') -W_0 )\bigg|_{z'=z} = \frac 1 {z^2}  \le[\mathbb A^{-1} \nabla \Theta(W_0)\ri]_{g} + \mathcal O(z^{-3}).
\ee
On the other hand $h^2  = z^{-2} + \mathcal O(z^{-3})$ and thus, by comparison, we deduce that the expression for the proportionality constant $C_0$ is as in \eqref{defC_0}.
\end{proof}
\bl
\label{oddchar}
Let $W_0 = \frac 12 \le(\vec m +  \tau\vec n\ri)$ with $\vec m, \vec n\in \Z^g$. Suppose that $\vec m \cdot \vec n$ is an odd number. 
Then 
\be\label{djdkth}
 \pa_j \pa_k \Theta(W_0)=-  i\pi \le(n_j \pa_k + n_k \pa_j \ri) \Theta(W_0).  \ee
\el
\begin{proof}
Consider the function (called ``theta function with characteristics $\vec n, \vec m$)
\be
\Theta\le[{\vec n\atop \vec m}\ri](\vec z) := \exp \le[ \frac {i\pi}4 \vec n ^t \tau \vec n  -i\pi  
\vec n ^t \vec z + \frac {i\pi}2 \vec n^t \vec m \ri] \Theta\le( \vec z  - W_0\ri). 
\ee
Then one verifies by the periodicities of $\Theta$ that this is an odd function 
$
\Theta\le[{\vec n\atop \vec m}\ri](- \vec z)  ={\rm e}^{i\pi \vec n \vec m} \Theta\le[{\vec n\atop \vec m}\ri](\vec z).$
Thus $\Theta\le[{\vec n\atop \vec m}\ri](0) = 0$   vanishes at $z=0$, namely $\Theta(-W_0)=0=\Theta(W_0)$. The even derivatives of $\Theta\le[{\vec n\atop \vec m}\ri]$ must also vanish at $\vec z =0$.  Thus 
\bea
\begin{split} 0=&\pa_{j} \pa_{k} \Theta\le[{\vec n\atop \vec m}\ri]( \vec z)\bigg|_{\vec z=0} \ \ \Rightarrow\\
  \Rightarrow -\pi^2 n_j & n_k \overbrace{\Theta(-W_0)}^{=0}  - i\pi \le(n_j \pa_k + n_k \pa_j \ri) \Theta(-W_0)  + \pa_j \pa_k \Theta(-W_0)=0.
\end{split}
\eea
Using the parity of $\Theta$ we obtain the claim.
\end{proof}
\section{On strictly totally positive  kernels}
\label{PinkusApp}
 In \cite{Pinkus-rev} it is stated and proved  (adapting the notation to ours)
\begin{theorem}[\cite{Pinkus-rev}, Theorem 3.2]
Let $L:[0,1]\times [0,1]$ be a continuous, symmetric function satisfying 
\be
\det\le[L(x_j,y_\ell)\ri]_{1\leq j,\ell\leq n} \geq 0\ ,\ \ n=1,2,\dots
\label{TPL}
\ee
for any pair of ordered n-tuples and 
\be
\det\le[L(x_j,x_\ell)\ri]_{1\leq j,\ell\leq n} >0\ .
\ee
Then the spectrum of the (compact) integral operator with kernel $L$ is positive and simple.
\end{theorem}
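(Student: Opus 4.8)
The plan is to combine the spectral theorem with Sylvester's criterion (for positivity) and a Perron--Frobenius/Jentzsch argument applied to the whole family of compound operators (for simplicity). Write $K$ for the integral operator on $L^2([0,1])$ with kernel $L$; it is compact (Hilbert--Schmidt, since $L$ is continuous on a compact square) and self-adjoint (since $L$ is real and symmetric), so it has a sequence of real eigenvalues accumulating only at $0$, with an orthonormal basis of eigenfunctions. First I would establish positivity: for any ordered tuple $x_1<\dots<x_m$ the symmetric matrix $[L(x_j,x_\ell)]_{j,\ell=1}^m$ has all of its leading principal minors positive (each is itself a determinant over an ordered sub-tuple, hence positive by hypothesis), so by Sylvester's criterion it is positive definite. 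Approximating the quadratic form $\iint L(x,y)f(x)f(y)\,dx\,dy$ by Riemann sums then gives $\langle Kf,f\rangle\ge 0$, and positivity of all diagonal minors is exactly strict positive-definiteness of the continuous kernel, which excludes $0$ from the point spectrum; hence $\lambda_n>0$ for all $n$. In particular Mercer's theorem applies: $L(x,y)=\sum_{i}\lambda_i\phi_i(x)\phi_i(y)$ uniformly, with $\lambda_1\ge\lambda_2\ge\cdots>0$ and $\{\phi_i\}$ orthonormal.

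Next I would introduce, for each $n\in\N$, the $n$-th \emph{compound kernel}
\[
L_n\big((x_1,\dots,x_n),(y_1,\dots,y_n)\big):=\det\big[L(x_j,y_\ell)\big]_{1\le j,\ell\le n},
\]
viewed as a kernel on the open simplex $\Delta_n=\{0<x_1<\dots<x_n<1\}$, and let $K_n$ be the associated integral operator on $L^2(\Delta_n)$. Applying the Cauchy--Binet formula to the Mercer expansion of $L$ (equivalently, Andreief's identity as in the proof of Lemma~\ref{propSTP}) one finds $L_n(x,y)=\sum_{i_1<\dots<i_n}\lambda_{i_1}\cdots\lambda_{i_n}\,\det[\phi_{i_a}(x_j)]\det[\phi_{i_a}(y_\ell)]$, so the spectrum of $K_n$ is precisely the set of products $\{\lambda_{i_1}\lambda_{i_2}\cdots\lambda_{i_n}:1\le i_1<i_2<\dots<i_n\}$. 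Since the $\lambda_i$ are positive and decreasing, the largest eigenvalue of $K_n$ is $\Lambda_n:=\lambda_1\lambda_2\cdots\lambda_n$.

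The heart of the argument is to show that $\Lambda_n$ is a \emph{simple} eigenvalue of $K_n$. By total positivity, $L_n\ge 0$ on $\Delta_n\times\Delta_n$; moreover on the diagonal $L_n\big((x),(x)\big)=\det[L(x_j,x_\ell)]>0$, so by continuity $L_n>0$ on a neighbourhood of the diagonal of $\Delta_n\times\Delta_n$. Since $\Delta_n$ is connected, this forces $K_n$ to be irreducible (no measurable $A$ with $0<|A|<|\Delta_n|$ can have $L_n$ vanishing a.e.\ on $A\times A^c$, since a diagonal neighbourhood of a point on the essential boundary of $A$ meets both $A$ and $A^c$ in positive measure). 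Thus $K_n$ is a compact, positive, irreducible operator, and the Jentzsch/Krein--Rutman theorem gives that its leading eigenvalue $\Lambda_n$ is algebraically simple, with an a.e.\ strictly positive eigenfunction.

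Finally I would compare across $n$. If some eigenvalue of $K$ had multiplicity $\ge 2$, then in the decreasing enumeration $\lambda_n=\lambda_{n+1}$ for some $n$, whence
\[
\lambda_1\lambda_2\cdots\lambda_{n-1}\lambda_n \;=\; \lambda_1\lambda_2\cdots\lambda_{n-1}\lambda_{n+1} \;=\; \Lambda_n ,
\]
so $\Lambda_n$ is attained by the two distinct index sets $\{1,\dots,n\}$ and $\{1,\dots,n-1,n+1\}$ and hence is an eigenvalue of $K_n$ of multiplicity $\ge 2$ --- contradicting the simplicity just proved. Therefore $\lambda_n>\lambda_{n+1}$ for every $n$: the spectrum is positive and simple. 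I expect the main obstacle to be the irreducibility/primitivity step for $K_n$ on the simplex --- rigorously propagating the near-diagonal strict positivity of $L_n$ (with uniform control down to $\partial\Delta_n$) and phrasing the Jentzsch-type conclusion precisely for an $L^2$ integral operator whose kernel is only continuous and not strictly positive everywhere; the remaining ingredients (compactness, self-adjointness, Sylvester, Mercer, Cauchy--Binet) are routine.
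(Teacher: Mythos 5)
Your proposal is, in substance, the classical compound-kernel argument of Kellogg/Gantmacher--Krein (Mercer expansion, Cauchy--Binet, Jentzsch-type simplicity of the top eigenvalue of each exterior power, then comparison of consecutive products), which is also the route of the cited source \cite{Pinkus-rev}; the paper itself does not reprove the theorem at all --- it quotes it and only adds a remark explaining that, since in its application the kernel is \emph{strictly} totally positive (Lemma~\ref{propSTP}), only the simplest case of Proposition 3.3 of \cite{Pinkus-rev} is needed. So you have reconstructed the standard proof rather than found a different one, and the overall plan is sound: the identification of the nonzero spectrum of $K_n$ with the products $\lambda_{i_1}\cdots\lambda_{i_n}$ via Mercer plus Cauchy--Binet is correct (the determinants $\det[\phi_{i_a}(x_j)]$ are orthonormal on the simplex by Andreief), and the final multiplicity comparison is exactly the classical conclusion.

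Two points deserve attention. First, your parenthetical claim that strict positive definiteness of all finite sections ``excludes $0$ from the point spectrum'' is not justified: positive definiteness of the matrices $[L(x_j,x_\ell)]$ does not imply injectivity of the integral operator, since a nonzero $L^2$ function orthogonal to all Mercer eigenfunctions need not produce a finite point-mass relation among them. Fortunately this claim is not needed anywhere in your argument (Mercer and the compound-operator spectral analysis only involve the positive eigenvalues), and the theorem, as the paper uses it, concerns the nonzero eigenvalues. Second, the simplicity step for $K_n$ is the only genuinely delicate part under the stated (non-strict) hypotheses: your irreducibility argument via connectedness of the simplex and near-diagonal strict positivity is correct, but it must be coupled with a precise Perron--Frobenius/Jentzsch theorem for irreducible compact positive operators (e.g.\ Schaefer or de Pagter), or with the direct self-adjoint variational argument (if $u$ is a top eigenfunction then so is $|u|$, and irreducibility forces strictness), so a citation or a few lines are still owed there. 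Note also that in the paper's actual application the kernel is strictly totally positive, so every compound kernel is strictly positive on the open simplex and the classical Jentzsch theorem applies without any irreducibility discussion --- this is precisely the simplification the paper's appendix remark is pointing out.
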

\br
In the proof  of the auxiliary Proposition 3.3 in \cite{Pinkus-rev}  there is a separation of cases after equation (3.2) of \cite{Pinkus-rev}. We point out that in our case the inequality in \eqref{TPL}  is strict and hence only the simplest first case considered in \cite{Pinkus-rev} applies. Indeed, \cite{Pinkus-rev} introduces $F(x,y)$,  the kernel of the  projector onto the (finite dimensional) eigenspace of the largest eigenvalue $\lambda_0>0$. Then 
$$
F(x,y) = \lambda_0 \int_0^1 L(x,z)F(z,y) dz =  \lambda_0 \int_0^1 F(x,z) L(z,y)dz
$$
and, since $F(x,y) = \lim_{r\to\infty}\lambda_0^{-r} (L^r)(x,y)\geq 0$  (because it is the projector on the eigenspace of the  largest eigenvalue, which is simple), and  $L(x,y)>0$ (strictly), and $\lambda_0>0$, it follows at once that actually $F(x,y)>0$ (strictly) on the closure $[0,1]$  and thus we fall in the first case of Proposition 3.3. of \cite{Pinkus-rev}.
\er

\bibliographystyle{amsalpha}
\bibliography{bibexport}

\end{document}